\numberwithin{equation}{section}
\theoremstyle{plain}
\newtheorem{theorem}[equation]{Theorem}
\newtheorem{corollary}[equation]{Corollary}
\newtheorem{lemma}[equation]{Lemma}
\newtheorem{proposition}[equation]{Proposition}
\theoremstyle{definition}
\newtheorem{definition}[equation]{Definition}
\newtheorem{example}[equation]{Example}
\newtheorem{remark}[equation]{Remark}
\newcommand*{\hcal}{{\mathcal H}}
\newcommand*{\lcal}{{\mathcal L}}
\newcommand*{\R}{{\mathbb R}}
\newcommand*{\N}{{\mathbb N}}
\newcommand*{\C}{{\mathbb C}}
\newcommand*{\Om}{\Omega}
\newcommand*{\dOm}{{\partial\Omega}}
\newcommand*{\symdiff}{\bigtriangleup}
\newcommand*{\eqcolon}{\mathbin{=:}}
\newcommand*{\coloneq}{\mathbin{:=}}
\providecommand{\vint}[1]{\mathchoice
          {\mathop{\vrule width 5pt height 3 pt depth -2.5pt
                  \kern -9pt \kern 1pt\intop}\nolimits_{\kern -5pt{#1}}}
          {\mathop{\vrule width 5pt height 3 pt depth -2.6pt
                  \kern -6pt \intop}\nolimits_{\kern -3pt{#1}}}
          {\mathop{\vrule width 5pt height 3 pt depth -2.6pt
                  \kern -6pt \intop}\nolimits_{\kern -3pt{#1}}}
          {\mathop{\vrule width 5pt height 3 pt depth -2.6pt
                  \kern -6pt \intop}\nolimits_{\kern -3pt{#1}}}}
\newcommand*{\eps}{\varepsilon}
\newcommand*{\loc}{\mathrm{loc}}
\newcommand*{\BV}{\mathrm{BV}}
\newcommand*{\liploc}{\mathrm{Lip}_{\mathrm{loc}}}
\newcommand*{\ch}{\text{\raise 1.3pt \hbox{$\chi$}\kern-0.2pt}}
\DeclareMathOperator{\capa}{Cap}
\DeclareMathOperator{\dist}{dist}
\DeclareMathOperator{\sgn}{sgn}
\DeclareMathOperator{\diam}{diam}
\DeclareMathOperator{\Lip}{Lip}
\begin{document}
\title{An analog of the Neumann problem for 1-Laplace equation in the metric setting: existence,
boundary regularity, and stability
\footnotetext{{\bf 2010 Mathematics Subject Classification}: 30L99, 26B30, 43A85.
\hfill \break {\it Keywords\,}: bounded variation, metric measure space,
Neumann problem, positive mean curvature, stability}}
\author{Panu Lahti%
\footnote{P.\,L.\@ was supported by the Finnish Cultural Foundation.}
\and Luk\'{a}\v{s} Mal\'{y}%
\footnote{L.\,M.\@ was supported by the Knut and Alice Wallenberg Foundation (Sweden).}
\and Nageswari Shanmugalingam%
\footnote{N.\,S.\@ was partially supported by the grant DMS-1500440 from NSF(U.S.A.).}
}
\maketitle
\begin{abstract}
We study an inhomogeneous Neumann boundary value 
problem for functions of least gradient on bounded domains in metric spaces that are
equipped with a doubling measure and support a Poincar\'e inequality. We show that solutions exist
under certain regularity assumptions on the domain, but are generally nonunique.
We also show that solutions can be taken to be differences of two characteristic functions,
and that they are regular up to the boundary when the boundary is of \emph{positive mean curvature}.
By regular up to the boundary
we mean that if the boundary data is $1$ in a neighborhood of a point on the boundary of the domain, then
the solution is $-1$ in the intersection of the domain with a possibly smaller neighborhood of that point.
Finally, we consider the stability of solutions with respect to boundary data.
\end{abstract}
\section{Introduction}
The goal of the Neumann boundary value problem for $\Delta_p$ in a smooth 
Euclidean domain $\Omega\subset \R^n$ is to find a function
$u\in W^{1,p}(\Om)$ such that
\begin{align*}
\Delta_p u=-\text{div}(|\nabla u|^{p-2}\nabla u)=0&\text{ in }\Om, \text{ and }\\
 |\nabla u|^{p-2}\partial_\eta u=f&\text{ on }\partial\Om,
\end{align*}
where $\partial_\eta u$ is the derivative of $u$ in the direction of outer normal to $\partial\Om$ and
$f\in L^\infty(\partial\Om,\mathcal{H}^{n-1})$ such that $\int_{\partial\Om}f\, d\mathcal{H}^{n-1}=0$. 
For $p=1$ this problem is highly degenerate, see for example~\cite{MRS2,MST}.

In the study of boundary value problems for PDEs, more attention has generally been given to
Dirichlet problems than to Neumann problems. This is especially true in the general setting
of a metric space equipped with a doubling measure that supports a Poincar\'e inequality.
In this setting, nonlinear potential theory for Dirichlet problems when $p>1$ is now well developed,
see the monograph \cite{BB} as well as e.g. \cite{BB-OD,BBS2,BBS3,S2}.
By contrast, Neumann problems have been studied very little. The paper~\cite{DL} dealt
mostly with homogeneous Neumann boundary value problem, while in
the paper~\cite{MaSh}, a Neumann problem was formulated as the minimization of the functional
\[
I_p(u)=\int_{\Om} g_u^p\,d\mu+\int_{\partial\Om} Tu\,f\,dP(\Om,\cdot),
\]
where $g_u$ is an upper gradient of $u$ and $p>1$, see Section~\ref{sec:notation} for notation.
In the Euclidean setting, with $\Om$ a smooth domain, a variant of this boundary value problem was
studied in~\cite{MRS2}, and a connection between the problem for
$p>1$ and the problem for $p=1$ was established through a study of the behavior of solutions $u_p$ for
$p>1$ as $p\to 1^+$. 
For functions $f\in L^\infty(\partial\Om)$, the following norm was associated in~\cite{MRS2, MST}:
\[
\| f\|_*=\sup\bigg \lbrace \frac{\int_{\partial\Om}fw\, dP(\Om,\cdot)}{\| Dw\|(\Om)} :\, 
     w\in \BV(\Om)\text{ with }w\ne 0, \int_{\partial\Om} w\, dP(\Om,\cdot)=0\bigg\rbrace.
\]
The problem of minimizing $I_p$
corresponding to $p=1$ was studied in~\cite{MST} and then in~\cite{MRS2} for Euclidean domains with
Lipschitz boundary, and $\| f\|_*\le1$. The paper~\cite{MRS2} also gave an application of
this problem to the study of electrical conductivity. We point out here that the condition $\| f\|_*\le 1$
gives the minimal energy $I_1(u)=0$, and hence constant functions will certainly minimize the energy.
Our focus in the present paper is to study the situation corresponding to $\| f\|_*>1$, in which case
there are no minimizers for the energy $I_1$ if one seeks to minimize $I_1(u)$ within the class of \emph{all}
functions $u\in \BV(\Om)$, see the discussion in the proof of~\cite[Proposition~3.1]{MRS2}. Thus we are
compelled to add further natural constraints on the competitor functions $u$,
namely that $-1\le u\le 1$. This 
constraint is not as restrictive as it might seem, and instead for any $\beta>0$ we can also consider constraints of the form that
all competitor functions satisfy $-\beta\le u\le \beta$. Then $u_\beta$ is a minimizer for
the constraint that all competitors $v$ should satisfy $-\beta\le v\le \beta$ if and only if $\beta^{-1}u_\beta$
is a minimizer for the constraint that all competitors $v$ satisfy $-1\le v\le 1$. Thus the study undertaken here
complements the results in~\cite{MRS2, MST} in the smooth Euclidean domains setting. For instance, suppose that
$\partial\Om$ is of positive mean curvature (either in the sense of Riemannian geometry in Euclidean setting, or
in the sense of Definition~\ref{defn:positive-curv} in the more general metric setting). For such a domain, whenever
the boundary data $f$ is not $\mathcal{H}$-a.e.~zero on $\dOm$ and
takes on only three values, $-1, 0, 1$, then $\| f\|_*>1$; this interesting setting,
excluded in the studies in~\cite{MRS2,MST}, is covered in Section~5 of the present paper. For an alternate
(but equivalent) framing of the Neumann boundary value problem for $p=1$, see~\cite{Mo17}. The paper~\cite{Mo17} also gives
an application of the problem to the study of conductivity, see~\cite[Section~1.1]{Mo17}. The problem 
as framed in~\cite{Mo17} is not tractable in the metric setting as it relies heavily on the theory of divergence free
$L^\infty$-vector fields, a tool that is lacking in the non-smooth setting.

In this paper, our goal is to study an analogous problem of minimizing $I_p$ 
in the metric setting when $p=1$.
In this case, instead of the $p$-energy it is natural to minimize the total variation 
among functions of bounded variation. 
See e.g. \cite{MRS,MST,SS2,SWZ,ZiZu}
for previous studies of the Dirichlet problem when $p=1$ in the Euclidean setting,
and \cite{HKLS,KLLS,LMSS} in the metric setting.
In this paper, following the formulation given in \cite{MaSh}, we consider minimization of the functional
\[
I(u)=\| Du\|(\Om)+\int_{\partial\Om} Tu\,f\,dP(\Om,\cdot).
\]

Our goal is to study the existence, uniqueness, regularity, and stability properties of
solutions. In Section \ref{sec:prelis} we consider basic properties of solutions and note that they
are generally nonunique. However, in Proposition \ref{prop:existence of set minimizers} we show
that if a solution exists, it can be taken to be of the
form $\ch_{E_1}-\ch_{E_2}$ for disjoint sets $E_1,E_2\subset \Om$. In most of the rest of the
paper, we consider only such solutions. It is clear that these solutions cannot exhibit
much interior regularity, but in Proposition \ref{prop:E1-leastGrad} we show that $\ch_{E_1}$
and $\ch_{E_2}$ are functions of least gradient.

In Section \ref{sec:existence} we show that under some regularity assumptions on $\Om$,
and additionally that $-1\le f\le 1$,
the functional $I(\cdot)$ is lower semicontinuous with respect to convergence in $L^1(\Om)$,
and we use this fact to establish the existence of solutions; this is Theorem
\ref{thm:existence of solutions}.
In Section \ref{sec:when f is integer} we study the boundary regularity of solutions when
$f$ only taken the values $-1,0,1$. In the Euclidean setting, $f$ can be interpreted
as the relative outer normal derivative of the solution, and so one would expect to have
$T\ch_{E_1}=1$ where $f=-1$. This is not always the case, but in
Theorem \ref{thm:poscurv:bdry-data-agreement} we show that $T\ch_{E_1}=1$ in the interior
points of $\{f=-1\}$ when $\Om$ has boundary of \emph{positive mean curvature}.

While solutions are generally nonunique, in Theorem \ref{thm:sol-with-minmeasure} we show that
so-called minimal solutions are unique. Finally, in Section \ref{sec:stability} we study stability
properties of solutions with respect to boundary data, and show that a convergent sequence
of boundary data yields a sequence of solutions that converges up to a subsequence; this is
Theorem \ref{thm:stable1}. Finally, in Theorem~\ref{thm:monotone} we present one method
of explicitly constructing a solution for limit boundary data.

Note that if $\Om$ is a domain such that $\mu(X\setminus\Om)=0$, then as BV functions are
insensitive to sets of measure zero, we will always have that $P(\Om,\cdot)$ is the zero measure and, by
the Poincar\'e inequality,
the minimizer of the functional $I$ is a ($\mu$-a.e.) constant function. This is not a very interesting
situation to consider. The results in this paper will be significant only for domains $\Om$ with $\mu(X\setminus\Om)>0$.
\section{Notation and definitions}
\label{sec:notation}
In this section we introduce the necessary notation and assumptions.

In this paper, $(X,d,\mu)$ is a complete metric space equipped
with a Borel regular outer measure $\mu$ satisfying a doubling property, that is,
there is a constant $C_d\ge 1$ such that
\[
0<\mu(B(x,2r))\le C_d\mu(B(x,r))<\infty
\]
for every ball $B=B(x,r)$ with center $x\in X$ and radius $r>0$.
If a property holds outside a set of $\mu$-measure zero, we say that it holds almost everywhere,
or a.e. We assume that $X$ consists of at least two points. When we want to specify
that a constant $C$ depends on the parameters $a,b, \ldots,$ we write $C=C(a,b,\ldots)$.

A complete metric space with a doubling measure is proper,
that is, closed and bounded subsets are compact. Since $X$ is proper, for any open set $\Omega\subset X$
we define $\liploc(\Omega)$ to be the space of
functions that are Lipschitz in every open $\Omega'\Subset\Omega$.
Here $\Omega'\Subset\Omega$ means that $\overline{\Omega'}$ is a
compact subset of $\Omega$. Other local spaces of functions are defined analogously.

For any set $A\subset X$ and $0<R<\infty$, the restricted spherical Hausdorff content
of codimension $1$ is defined by
\[
\hcal_{R}(A)=\inf\left\{ \sum_{i=1}^{\infty}
  \frac{\mu(B(x_{i},r_{i}))}{r_{i}}:\,A\subset\bigcup_{i=1}^{\infty}B(x_{i},r_{i}),\,r_{i}\le R\right\}.
\]
The codimension $1$ Hausdorff measure of a set $A\subset X$ is given by
\[
  \hcal(A)=\lim_{R\rightarrow 0}\hcal_{R}(A).
\]

The measure theoretic boundary $\partial^*E$ of a set $E\subset X$ is the set of points $x\in X$
at which both $E$ and its complement have positive upper density, i.e.
\[
\limsup_{r\to 0}\frac{\mu(B(x,r)\cap E)}{\mu(B(x,r))}>0\quad
  \textrm{and}\quad\limsup_{r\to 0}\frac{\mu(B(x,r)\setminus E)}{\mu(B(x,r))}>0.
\]
The measure theoretic interior and exterior of $E$ are defined respectively by
\begin{equation}\label{eq:definition of measure theoretic interior}
I_E=\left\{x\in X:\,\lim_{r\to 0}\frac{\mu(B(x,r)\setminus E)}{\mu(B(x,r))}=0\right\}
\end{equation}
and
\begin{equation}\label{eq:definition of measure theoretic exterior}
O_E=\left\{x\in X:\,\lim_{r\to 0}\frac{\mu(B(x,r)\cap E)}{\mu(B(x,r))}=0\right\}.
\end{equation}
A curve $\gamma$ is a nonconstant rectifiable continuous mapping from a compact interval
into $X$.
The length of a curve $\gamma$
is denoted by $\ell_{\gamma}$. We will assume every curve to be parametrized
by arc-length, which can always be done (see e.g. \cite[Theorem 3.2]{Hj}).
A nonnegative Borel function $g$ on $X$ is an upper gradient 
of an extended real-valued function $u$
on $X$ if for all curves $\gamma$ on $X$, we have
\begin{equation}\label{eq:definition of upper gradient}
|u(x)-u(y)|\le \int_0^{\ell_{\gamma}}g(\gamma(s))\,ds,
\end{equation}
where $x$ and $y$ are the end points of $\gamma$. We interpret $|u(x)-u(y)|=\infty$ whenever  
at least one of $|u(x)|$, $|u(y)|$ is infinite.
Upper gradients were originally introduced in~\cite{HK}.

If $g$ is a nonnegative $\mu$-measurable function on $X$
and (\ref{eq:definition of upper gradient}) holds for $1$-a.e. curve,
we say that $g$ is a $1$-weak upper gradient of~$u$. 
A property holds for $1$-a.e. curve
if it fails only for a curve family with zero $1$-modulus. 
A family $\Gamma$ of curves is of zero $1$-modulus if there is a 
nonnegative Borel function $\rho\in L^1(X)$ such that 
for all curves $\gamma\in\Gamma$, the curve integral $\int_\gamma \rho\,ds$ is infinite.

Let $\Omega\subset X$ be open. By only considering curves in $\Om$, we can say that $g$
is an upper gradient of $u$ in $\Om$. 
We let
\[
\| u\|_{N^{1,1}(\Omega)}=\| u\|_{L^1(\Omega)}+\inf \| g\|_{L^1(\Omega)},
\]
where the infimum is taken over all upper gradients $g$ of $u$ in $\Omega$.
The substitute for the Sobolev space $W^{1,1}(\Omega)$ in the metric setting is the Newton-Sobolev space
\[
N^{1,1}(\Omega)\coloneq \{u:\|u\|_{N^{1,1}(\Omega)}<\infty\}.
\]
We understand Newton-Sobolev functions to be defined everywhere (even though
$\| \cdot\|_{N^{1,1}(\Omega)}$ is then only a seminorm).
For more on Newton-Sobolev spaces, we refer to~\cite{S, BB, HKST}.

The $1$-capacity of a set $A\subset X$ is given by
\begin{equation}\label{eq:definition of p-capacity}
 \capa_1(A)=\inf \| u\|_{N^{1,1}(X)},
\end{equation}
where the infimum is taken over all functions $u\in N^{1,1}(X)$ such that $u\ge 1$ in $A$.
We know that when $X$ supports a $(1,1)$-Poincar\'e inequality (see below), $\capa_1$ is an outer capacity, meaning that
\[
\capa_1(A)=\inf\{\capa_1(U):\,U\supset A\textrm{ is open}\}
\]
for any $A\subset X$, see e.g. \cite[Theorem 5.31]{BB}.
If a property holds outside a set
$A\subset X$ with $\capa_1(A)=0$, we say that it holds $1$-quasieverywhere, or $1$-q.e.

Next we recall the definition and basic properties of functions
of bounded variation on metric spaces, following \cite{M}.
See also e.g. \cite{AFP, EvaG92, Fed, Giu84, Zie89} for the classical 
theory in the Euclidean setting.
For $u\in L^1_{\loc}(X)$, we define the total variation of $u$ in $X$ to be 
\[
\|Du\|(X)=\inf\left\{\liminf_{i\to\infty}\int_X g_{u_i}\,d\mu:\, u_i\in \Lip_{\loc}(X),\, u_i\to u\textrm{ in } L^1_{\loc}(X)\right\},
\]
where each $g_{u_i}$ is an upper gradient of $u_i$.
We say that a function $u\in L^1(X)$ is of bounded variation, 
denoted by $u\in\BV(X)$, if $\|Du\|(X)<\infty$.
By replacing $X$ with an open set $\Omega\subset X$ in the definition of the total variation, we can define $\|Du\|(\Omega)$.
For an arbitrary set $A\subset X$, we define
\[
\|Du\|(A)=\inf\{\|Du\|(\Omega):\, A\subset\Omega,\,\Omega\subset X
\text{ is open}\}.
\]
If $u\in\BV(X)$, $\|Du\|(\cdot)$ is a finite Radon measure on $X$ by \cite[Theorem 3.4]{M}.
A $\mu$-measurable set $E\subset X$ is said to be of finite perimeter in $\Om$
if $\|D\ch_E\|(\Om)<\infty$, where $\ch_E$ is the characteristic function of $E$.
The perimeter of $E$ in $\Omega$ is also denoted by
\[
P(E,\Omega)\coloneq\|D\ch_E\|(\Omega).
\]

We have the following coarea formula from~\cite[Proposition 4.2]{M}: if $\Om\subset X$ is
an open set and $u\in \BV(\Om)$, then for any Borel set $A\subset \Om$,
\begin{equation}\label{eq:coarea}
\|Du\|(A)=\int_{-\infty}^{\infty}P(\{u>t\},A)\,dt.
\end{equation}

We will assume throughout that $X$ supports a $(1,1)$-Poincar\'e inequality,
meaning that there exist constants $C_P>0$ and $\lambda \ge 1$ such that for every
ball $B(x,r)$, every locally integrable function $u$ on $X$,
and every upper gradient $g$ of $u$,
we have 
\[
\vint{B(x,r)}|u-u_{B(x,r)}|\, d\mu 
\le C_P r\vint{B(x,\lambda r)}g\,d\mu,
\]
where 
\[
u_{B(x,r)}\coloneq\vint{B(x,r)}u\,d\mu \coloneq\frac 1{\mu(B(x,r))}\int_{B(x,r)}u\,d\mu.
\]
By applying the Poincar\'e inequality to approximating locally Lipschitz functions in the 
definition of the total variation, 
we get the following
for $\mu$-measurable sets $E\subset X$: 
\begin{equation}\label{eq:relative isoperimetric inequality}
\min\{\mu(B(x,r)\cap E),\,\mu(B(x,r)\setminus E)\}\le 2C_P rP(E,B(x,\lambda r)).
\end{equation}

For an open set $\Omega\subset X$ and a $\mu$-measurable set $E\subset X$
with $P(E,\Omega)<\infty$, we know that for any Borel set $A\subset\Om$,
\begin{equation}\label{eq:def of theta}
P(E,A)=\int_{\partial^*E\cap A}\theta_E\,d\hcal,
\end{equation}
where
$\theta_E\colon X\to [\alpha,C_d]$ with $\alpha=\alpha(C_d,C_P,\lambda)>0$, see \cite[Theorem 5.3]{A1} 
and \cite[Theorem 4.6]{AMP}.

The lower and upper approximate limits of a function $u$ on $X$ are defined respectively by
\[
u^{\wedge}(x):
=\sup\left\{t\in\R:\,\lim_{r\to 0}\frac{\mu(B(x,r)\cap\{u<t\})}{\mu(B(x,r))}=0\right\}
\]
and
\[
u^{\vee}(x):
=\inf\left\{t\in\R:\,\lim_{r\to 0}\frac{\mu(B(x,r)\cap\{u>t\})}{\mu(B(x,r))}=0\right\}.
\]
The jump set of a function $u$ is the set
\[
S_{u}\coloneq \{x\in X:\, u^{\wedge}(x)<u^{\vee}(x)\},
\]

By \cite[Theorem 5.3]{AMP}, the variation measure of a $\BV$ function
can be decomposed into the absolutely continuous and singular part, and the latter
into the Cantor and jump part, as follows. Given an open set 
$\Omega\subset X$ and $u\in\BV(\Omega)$, we have for any Borel set $A\subset X$
\begin{align}
\notag
\| Du\|(A) &=\| Du\|^a(A)+\| Du\|^s(A)\\
\label{eq:decomposition}
&=\| Du\|^a(A)+\| Du\|^c(A)+\| Du\|^j(A)\\
\notag
&=\int_{A}a\,d\mu+\| Du\|^c(A)+\int_{A\cap S_u}\int_{u^{\wedge}(x)}^{u^{\vee}(x)}\theta_{\{u>t\}}(x)\,dt\,d\hcal(x),
\end{align}
where $a\in L^1(\Omega)$ is the density of the absolutely continuous part and the functions $\theta_{\{u>t\}}$ 
are as in~\eqref{eq:def of theta}.
\begin{definition} 
Let $\Omega\subset X$ be an open set and let $u$ be a $\mu$-measurable function on $\Omega$. 
For $x\in\partial\Omega$, the number $Tu(x)$ is the \emph{trace} of $u$ if
\[
\lim_{r\to 0}\,\vint{B(x,r)\cap\Omega}|u-Tu(x)|\,d\mu=0.
\]
\end{definition}
It is straighforward to check that the trace is always a Borel function on the set where it exists.
\begin{definition}
Let $\Om\subset X$ be an open set. A function $u\in \BV_{\loc}(\Om)$
is said to be of \emph{least gradient in $\Om$} if
\[
\| Du\|(\Om)\le \| D(u+\varphi)\|(\Om)
\]
for every $\varphi\in \BV(\Om)$ with compact support in $\Om$.
\end{definition}
\section{Preliminary results}
\label{sec:prelis}
In this section we define the Neumann problem and consider various basic properties of
solutions.

In this section, we always assume that $\Omega\subset X$ is a nonempty bounded open set
with $P(\Om,X)<\infty$, such that
for any $u\in \BV(\Omega)$, the trace $Tu(x)$
exists for $\hcal$-a.e.\@ $x\in\partial^*\Omega$ and thus also for 
$P(\Omega,\cdot)$-a.e.\@ $x\in\partial^*\Omega$, by \eqref{eq:def of theta}. 
See \cite[Theorem 3.4]{LaSh} for conditions on $\Omega$ that guarantee that this holds.

For some of our results, we will also assume that the following exterior measure
density condition holds:
\begin{equation}\label{eq:exterior measure density condition}
\limsup_{r\to 0}\frac{\mu(B(x,r)\setminus \Omega)}{\mu(B(x,r))}>0
\quad \textrm{for }\hcal\text{-a.e.\@ }x\in \partial\Omega.
\end{equation}
Moreover, in this section we always assume that $f\in L^1(\partial^*\Omega,P(\Om,\cdot))$ such that
\begin{equation}\label{eq:f integrates to zero}
\int_{\partial^*\Omega}f\,d P(\Omega,\cdot)=0.
\end{equation}

Throughout this paper we will consider the following functional:
for $u\in\BV(\Omega)$, let
\[
I(u)=\| Du\|(\Omega)+\int_{\partial^*\Omega}Tu\, f\,dP(\Omega,\cdot).
\]
First we note the following basic property of the functional. We denote
$u_+=\max\{u,0\}$ and $u_-=\max\{-u,0\}$.
\begin{lemma}\label{lem:splitting into positive and negative parts}
For any $u\in\BV(\Omega)$, we have $I(u)=I(u_+)+I(-u_-)$.
\end{lemma}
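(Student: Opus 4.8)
The plan is to handle the two terms defining $I$ separately after writing $u=u_+ +(-u_-)$ as a pointwise identity of functions on $\Omega$. Note first that $u_+=\max\{u,0\}$ and $-u_-=\min\{u,0\}$ are truncations of $u$, hence they lie in $\BV(\Omega)$ with $\|Du_+\|(\Omega)\le\|Du\|(\Omega)$ and $\|D(-u_-)\|(\Omega)\le\|Du\|(\Omega)$; this follows by applying the $\max$/$\min$ operation to a sequence of locally Lipschitz functions approximating $u$ in $L^1_{\loc}(\Omega)$, since composing with a $1$-Lipschitz map does not enlarge upper gradients. In particular $I(u_+)$ and $I(-u_-)$ are well defined.

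For the total variation term I would invoke the coarea formula \eqref{eq:coarea}. The elementary observation is that for $t\ge 0$ one has $\{u>t\}=\{u_+>t\}$ while $\{-u_->t\}=\emptyset$, and for $t<0$ one has $\{u>t\}=\{-u_->t\}$ while $\{u_+>t\}=X$; in all cases the superlevel sets of $u_+$ at negative $t$ and of $-u_-$ at positive $t$ have zero perimeter in $\Omega$. Splitting the coarea integral for $u$ at $t=0$ and substituting these identities gives
\[
\|Du\|(\Omega)=\int_{-\infty}^{0}P(\{-u_->t\},\Omega)\,dt+\int_{0}^{\infty}P(\{u_+>t\},\Omega)\,dt=\|D(-u_-)\|(\Omega)+\|Du_+\|(\Omega),
\]
where in the last step I apply \eqref{eq:coarea} to $u_+$ and to $-u_-$.

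For the boundary term I would use that, under the standing assumptions on $\Omega$, the traces $Tu$, $Tu_+$ and $T(-u_-)$ all exist at $P(\Omega,\cdot)$-a.e.\ point of $\partial^*\Omega$. Because the averaged integrals in the definition of the trace are subadditive, the trace is additive wherever the relevant traces exist, so from $u=u_+ +(-u_-)$ we get $Tu(x)=Tu_+(x)+T(-u_-)(x)$ for $P(\Omega,\cdot)$-a.e.\ $x\in\partial^*\Omega$. The same estimate applied to $|u|$ shows $T|u|=|Tu|$ at such points, whence $Tu_+=(Tu)_+$ and $T(-u_-)=-(Tu)_-$, and therefore $|Tu_+|,|T(-u_-)|\le |Tu|$; since $f\in L^1(\partial^*\Omega,P(\Omega,\cdot))$ and $Tu\,f$ is integrable, so are $Tu_+\,f$ and $T(-u_-)\,f$, and integrating the pointwise identity against $f$ yields $\int_{\partial^*\Omega}Tu\,f\,dP(\Omega,\cdot)=\int_{\partial^*\Omega}Tu_+\,f\,dP(\Omega,\cdot)+\int_{\partial^*\Omega}T(-u_-)\,f\,dP(\Omega,\cdot)$. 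Adding this to the total variation identity gives $I(u)=I(u_+)+I(-u_-)$.

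The computations are all routine; the only points requiring a little care are the bookkeeping in the coarea split — checking the superlevel-set identities at negative $t$ and that the "extra" superlevel sets $X$ and $\emptyset$ contribute nothing — and verifying that the trace of a truncation is the corresponding truncation of the trace, so that the three boundary integrals are genuinely finite. I do not anticipate any substantive obstacle.
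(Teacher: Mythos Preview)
Your proposal is correct and follows essentially the same route as the paper: split the total variation via the coarea formula at $t=0$ and use linearity of the trace for the boundary term. The only cosmetic difference is that the paper converts the negative-$t$ half of the coarea integral using the complement identity $P(\{u>t\},\Omega)=P(\{u<t\},\Omega)$ and a change of variable to land on $\|Du_-\|(\Omega)$, whereas you directly identify $\{u>t\}=\{-u_->t\}$ for $t<0$ to get $\|D(-u_-)\|(\Omega)$; these are equivalent. (Minor typo: for $t<0$ you should write $\{u_+>t\}=\Omega$, not $X$, but of course $P(\Omega,\Omega)=0$ either way.)
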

\begin{proof}
Note that for any $\mu$-measurable $E\subset X$, we have $P(E,\Omega)=P(\Omega\setminus E,\Omega)$.
Since $\mu$ is $\sigma$-finite on $X$, it follows that for $\mathcal L^1$-a.e.\@ $t\in \R$
we have $\mu(\{u=t\})=0$ and thus $P(\{u<t\},\Omega)=P(\{u\le t\},\Omega)$,
where $\mathcal L^1$ is the Lebesgue measure.
Thus by the $\BV$ coarea formula \eqref{eq:coarea}, we have
\begin{align*}
I(u)
&=\int_{-\infty}^\infty P(\{u>t\},\Omega)\,dt+\int_{\partial^*\Omega}Tu\,f\,dP(\Omega,\cdot)\\
& =\int_{0}^\infty P(\{u>t\},\Omega)\,dt+\int_{-\infty}^0 P(\{u< t\},\Omega)\,dt+\int_{\partial^*\Omega}Tu\,f\,dP(\Omega,\cdot)\\
& =\int_{-\infty}^\infty P(\{u_+>t\},\Omega)\,dt+\int_{-\infty}^\infty P(\{u_->t\},\Omega)\,dt
+\int_{\partial^*\Omega}Tu\,f\,dP(\Omega,\cdot)\\
&=\| Du_+\|(\Omega)+\int_{\partial^*\Omega}Tu_+\,f\,dP(\Omega,\cdot)
+\| Du_-\|(\Omega)-\int_{\partial^*\Omega}Tu_-\,f\,dP(\Omega,\cdot)\\
&=I(u_+)+I(-u_-). \qedhere
\end{align*}
\end{proof}
Note that for $u\equiv 0$, $I(u)=0$. Thus, if $I(u)\ge 0$ for all $u\in\BV(\Omega)$, then
we find a minimizer simply by taking the zero function. Hence, we are more interested in the case 
where $I(u)<0$ for some $u\in\BV(\Omega)$. But then
\[
\lim_{\beta\to \infty} I(\beta u)=\lim_{\beta\to \infty} \beta I(u)=-\infty.
\]
Thus, we consider the following restricted minimization problem.
\begin{definition}\label{restricted minimization problem}
We say that a function $u\in \BV(\Omega)$ 
solves the restricted Neumann boundary value problem with boundary data $f$ if $-1\le u\le 1$ 
and $I(u)\le I(v)$ for all $v\in \BV(\Omega)$ with $-1\le v\le 1$.
\end{definition}
The restricted problem does not always have a solution. It may also have only trivial, i.e., constant, 
solutions even though the boundary data are non-trivial. Moreover, non-trivial solutions need not be unique.
In the Euclidean setting these issues were observed in~\cite{MRS2}.
\begin{example}\label{ex:nonexistence of solution}
In the unweighted plane (endowed with the Euclidean distance), consider the unit square, i.e., 
$\Omega=(0,1)^2$. Fix a constant $a>0$ and let $f=-a$ on the middle third portion of the bottom 
side, $f=a$ on the middle third portion of the top side, and $f=0$ elsewhere on the boundary. 
If $a>1$, then
    \[
    \inf_{u\in\BV(\Omega),\,\| u\|_{L^{\infty}(\Om)\le 1}} I(u)= \frac{2(1-a)}{3},
    \]
    but no admissible function gives this infimum.
\end{example}
\begin{example}\label{ex:nonuniqueness of solution}
Consider again the unit square $\Omega=(0,1)^2$ in the Euclidean plane. Fix a constant $a>0$ and let 
$f=-a$ on the bottom side, $f=a$ on the top side, and $f=0$ on the vertical sides. 
\renewcommand*{\theenumi}{\alph{enumi}}
\renewcommand*{\labelenumi}{(\theenumi)}
\begin{enumerate}
  \item If $a \in (0, 1)$, then $\inf_{u} I(u)=0$,
    which is attained only by $u \equiv c$ for any constant $c \in [-1, 1]$. The fact that no other solutions exist can be proven using Proposition~\ref{prop:existence of set minimizers} and Proposition~\ref{prop:E1-leastGrad} below.
  \item If $a=1$, then $\inf_{u} I(u)=0$, which is attained by any constant function $u\equiv c$ with $c\in[-1,1]$ as well as by any function $u(x,y) = v(y)$, $(x,y)\in\Om$, where $v$ is an arbitrary decreasing function with $v(0^+) = 1$, $v(1^-)=-1$.
\end{enumerate}
\end{example}
See also Example \ref{exa:minimality-lost2} for an example of nonuniqueness with $I(u)<0$.

Next we will show that it suffices to consider only a special subclass of $\BV$ functions
as candidates for a solution to the restricted Neumann problem.
First we note that we have the following version of Cavalieri's principle, which can be obtained
from the usual Cavalieri's principle by decomposing $\nu$ into its positive and negative parts.
\begin{lemma}
Let $\nu$ be a signed Radon measure on $X$. Then, for any nonnegative
$h\in L^1(X,|\nu|)$,
\[
\int_X h\,d\nu=\int_0^{\infty}\nu(\{h>t\})\,dt.
\]
\end{lemma}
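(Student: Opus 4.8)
The plan is to reduce the signed-measure statement to the classical (nonnegative-measure) Cavalieri principle by splitting $\nu$ into its positive and negative parts via the Jordan decomposition, exactly as the hint in the paragraph preceding the lemma suggests. Write $\nu = \nu^+ - \nu^-$, where $\nu^+$ and $\nu^-$ are the (mutually singular) nonnegative Radon measures from the Jordan decomposition, so that $|\nu| = \nu^+ + \nu^-$. Since $h \in L^1(X,|\nu|)$ and $h \ge 0$, we also have $h \in L^1(X,\nu^+)$ and $h \in L^1(X,\nu^-)$, because $\nu^\pm \le |\nu|$ as measures.

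Next I would apply the usual Cavalieri principle separately to $\nu^+$ and $\nu^-$: for a nonnegative $h \in L^1(X,\sigma)$ with $\sigma$ a nonnegative Radon measure, one has $\int_X h\,d\sigma = \int_0^\infty \sigma(\{h>t\})\,dt$. Applying this with $\sigma = \nu^+$ and $\sigma = \nu^-$ and subtracting gives
\[
\int_X h\,d\nu = \int_X h\,d\nu^+ - \int_X h\,d\nu^- = \int_0^\infty \nu^+(\{h>t\})\,dt - \int_0^\infty \nu^-(\{h>t\})\,dt.
\]
Since both integrals on the right are finite (as $h \in L^1(X,|\nu|)$ controls both), I can combine them into a single integral $\int_0^\infty \bigl(\nu^+(\{h>t\}) - \nu^-(\{h>t\})\bigr)\,dt = \int_0^\infty \nu(\{h>t\})\,dt$, using that $\{h>t\}$ is $\mu$-measurable (indeed Borel, or $|\nu|$-measurable) for each $t$ so that $\nu(\{h>t\})$ is well-defined, and that $\nu = \nu^+ - \nu^-$ applied to this set. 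This yields the claimed identity.

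The only genuine point requiring a word of care — and the mild "obstacle" — is the finiteness/integrability bookkeeping needed to justify splitting $\int_X h\,d\nu$ as a difference of two finite integrals and then recombining the two $t$-integrals: this is exactly where the hypothesis $h \in L^1(X,|\nu|)$ is used, and it guarantees both $\int_0^\infty \nu^+(\{h>t\})\,dt < \infty$ and $\int_0^\infty \nu^-(\{h>t\})\,dt < \infty$, so no $\infty - \infty$ issue arises. Everything else is a routine invocation of the classical result and linearity of the integral, so the proof is short.
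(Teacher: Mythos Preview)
Your proof is correct and follows exactly the approach the paper indicates: the paper simply remarks that the lemma ``can be obtained from the usual Cavalieri's principle by decomposing $\nu$ into its positive and negative parts,'' and you carry this out in full detail, including the finiteness check needed to avoid an $\infty-\infty$ issue.
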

\begin{proposition}\label{prop:existence of set minimizers}
Let $u\in\BV(\Omega)$ with $-1\le u\le 1$. Then, there exist
disjoint $\mu$-measurable sets $E_1,E_2 \subset \Omega$ such that
\[
I(\ch_{E_1}-\ch_{E_2})\le I(u).
\]
Furthermore, if $u$ is a solution to the restricted Neumann problem 
with boundary data $f$, then for $\lcal^1$-a.e.~$t_1,t_2\in (0,1)$,
the sets
\[
E_1\coloneq \{x\in\Om :\, u(x)>t_1\}\quad\text{and}\quad E_2\coloneq \{x\in\Om :\, u(x)<-t_2\}
\]
give a solution $\ch_{E_1}-\ch_{E_2}$ to the same restricted Neumann problem.
\end{proposition}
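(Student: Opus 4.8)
The plan is to exploit a layer-cake decomposition of the functional $I$ on nonnegative competitors, together with the splitting from Lemma~\ref{lem:splitting into positive and negative parts}. The key step will be to establish the identity
\begin{equation*}
  \int_0^1 I(\ch_{\{v>t\}})\,dt=I(v),\qquad v\in\BV(\Om),\ 0\le v\le 1,\tag{$\ast$}
\end{equation*}
together with its counterpart $\int_0^1 I(-\ch_{\{v>t\}})\,dt=I(-v)$ under the same hypotheses on $v$; here $\{v>t\}=\{x\in\Om:\,v(x)>t\}$. Both should follow from the coarea formula~\eqref{eq:coarea}, Fubini's theorem, and the fact that traces pass through superlevel sets at almost every level.

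To prove $(\ast)$, I would first note that by~\eqref{eq:coarea} and the assumption $0\le v\le 1$ (so that $P(\{v>t\},\Om)=0$ for $t\notin(0,1)$) one has $\int_0^1 P(\{v>t\},\Om)\,dt=\|Dv\|(\Om)$; in particular $\{v>t\}$ has finite perimeter in $\Om$, hence belongs to $\BV(\Om)$, for $\lcal^1$-a.e.\ $t$, so that by the standing assumptions on $\Om$ the trace $T\ch_{\{v>t\}}$ exists $P(\Om,\cdot)$-a.e.\ on $\partial^*\Om$ for such $t$. Next I would show that for $\lcal^1$-a.e.\ $t\in(0,1)$ one has $T\ch_{\{v>t\}}=\ch_{\{Tv>t\}}$ $P(\Om,\cdot)$-a.e.: at a point $x\in\partial^*\Om$ where $Tv(x)$ exists and $t<Tv(x)$, the elementary estimate
\[
  \frac{\mu(B(x,r)\cap\Om\cap\{v\le t\})}{\mu(B(x,r)\cap\Om)}\le\frac{1}{Tv(x)-t}\,\vint{B(x,r)\cap\Om}|v-Tv(x)|\,d\mu\xrightarrow{r\to0}0
\]
shows that $\ch_{\{v>t\}}$ has trace $1=\ch_{\{Tv(x)>t\}}$ at $x$, and the case $t>Tv(x)$ is symmetric and gives trace $0$; the only points left out lie in $\{Tv=t\}$, and since these sets are pairwise disjoint over $t\in\R$ and sit inside $\partial^*\Om$, which has finite $P(\Om,\cdot)$-measure, at most countably many of them are nonnull. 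Integrating $I(\ch_{\{v>t\}})=P(\{v>t\},\Om)+\int_{\partial^*\Om}T\ch_{\{v>t\}}\,f\,dP(\Om,\cdot)$ over $t\in(0,1)$ and using this, Fubini, and $\int_0^1\ch_{\{Tv(x)>t\}}\,dt=Tv(x)$ (which holds since $0\le Tv\le1$) then gives $(\ast)$; the counterpart identity is obtained in the same way, integrating instead $I(-\ch_{\{v>t\}})=P(\{v>t\},\Om)-\int_{\partial^*\Om}T\ch_{\{v>t\}}\,f\,dP(\Om,\cdot)$.

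For the first assertion, I would write $I(u)=I(u_+)+I(-u_-)$ by Lemma~\ref{lem:splitting into positive and negative parts}. Applying $(\ast)$ to $v=u_+$ and noting $\{u_+>t\}=\{u>t\}$ for $t>0$, I can pick $t_1\in(0,1)$ with $I(\ch_{\{u>t_1\}})\le I(u_+)$; applying the counterpart identity to $v=u_-$ and noting $\{u_->t\}=\{u<-t\}$ for $t>0$, I can pick $t_2\in(0,1)$ with $I(-\ch_{\{u<-t_2\}})\le I(-u_-)$. With $E_1=\{x\in\Om:\,u(x)>t_1\}$ and $E_2=\{x\in\Om:\,u(x)<-t_2\}$ — disjoint since $t_1,t_2>0$ — the positive and negative parts of $\ch_{E_1}-\ch_{E_2}$ are $\ch_{E_1}$ and $\ch_{E_2}$, so Lemma~\ref{lem:splitting into positive and negative parts} yields $I(\ch_{E_1}-\ch_{E_2})=I(\ch_{E_1})+I(-\ch_{E_2})\le I(u_+)+I(-u_-)=I(u)$.

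For the second assertion, assume $u$ solves the restricted Neumann problem with data $f$. For $\lcal^1$-a.e.\ $t_1\in(0,1)$ the function $\ch_{\{u>t_1\}}-u_-$ belongs to $\BV(\Om)$, takes values in $[-1,1]$, and (since $t_1>0$) has positive part $\ch_{\{u>t_1\}}$ and negative part $u_-$; so by minimality and Lemma~\ref{lem:splitting into positive and negative parts}, $I(u)\le I(\ch_{\{u>t_1\}}-u_-)=I(\ch_{\{u>t_1\}})+I(-u_-)$, whence $I(\ch_{\{u>t_1\}})\ge I(u)-I(-u_-)=I(u_+)$. As $(\ast)$ makes the average of $t_1\mapsto I(\ch_{\{u>t_1\}})$ over $(0,1)$ equal to $I(u_+)$, this forces $I(\ch_{\{u>t_1\}})=I(u_+)$ for $\lcal^1$-a.e.\ $t_1\in(0,1)$; running the same argument with $-u,-f$ in place of $u,f$ gives $I(-\ch_{\{u<-t_2\}})=I(-u_-)$ for $\lcal^1$-a.e.\ $t_2\in(0,1)$. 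For such $t_1,t_2$, with $E_1,E_2$ as above, $I(\ch_{E_1}-\ch_{E_2})=I(\ch_{E_1})+I(-\ch_{E_2})=I(u_+)+I(-u_-)=I(u)$, and since $\ch_{E_1}-\ch_{E_2}$ is admissible it solves the same problem. I expect the main obstacle to be the trace identity $T\ch_{\{v>t\}}=\ch_{\{Tv>t\}}$ at almost every level — principally checking that $\{Tv=t\}$ is $P(\Om,\cdot)$-negligible for a.e.\ $t$ — together with keeping track of the fact that the coarea identity $\int_0^1 P(\{v>t\},\Om)\,dt=\|Dv\|(\Om)$ requires $v\ge0$, which is precisely the reason for splitting $u=u_+-u_-$ at the outset.
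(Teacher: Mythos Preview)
Your proposal is correct and follows essentially the same route as the paper: the coarea/layer-cake identity $(\ast)$, the trace comparison $\ch_{\{Tv>t\}}\le T\ch_{\{v>t\}}\le \ch_{\{Tv\ge t\}}$ combined with $P(\Om,\{Tv=t\})=0$ for a.e.\ $t$, and the splitting Lemma~\ref{lem:splitting into positive and negative parts}. The only cosmetic difference is in the second assertion, where you compare $u$ against $\ch_{\{u>t_1\}}-u_-$ to obtain $I(\ch_{\{u>t_1\}})\ge I(u_+)$, while the paper fixes a good level $t_2$ first and compares against $\ch_{\{u>s\}}-\ch_{\{u<-t_2\}}$; both yield the same conclusion.
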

\begin{proof}
By Lemma \ref{lem:splitting into positive and negative parts} we have $I(u)=I(u_+)+I(-u_-)$.
By using the $\BV$ coarea formula \eqref{eq:coarea}, and applying the above Cavalieri's
principle with $d\nu=f\,dP(\Omega,\cdot)$,
\begin{equation}\label{eq:caval}
I(u_+)=\int_0^1 \left( P(\{u_+>t\},\Omega) + \int_{\{T u_+>t\}}f\, dP(\Omega,\cdot)\right)\,dt.
\end{equation}
If $t\in (0,1)$ and $Tu_+(x)<t$ for some $x\in\partial^*\Omega$, then 
\begin{align*}
\limsup_{r\to 0}\,\vint{B(x,r)\cap \Omega} & |\ch_{\{u_+>t\}}-0|\,d\mu
=\limsup_{r\to 0}\frac{\mu(B(x,r)\cap \{u_+>t\})}{\mu(B(x,r)\cap \Omega)}\\
&\le\frac{1}{t-Tu_+(x)}\limsup_{r\to 0}\,\vint{B(x,r)\cap \Omega}|u_+-Tu_+(x)|\,d\mu
=0.
\end{align*}
Thus, $T\ch_{\{u_+>t\}}(x)>0$ yields that $Tu_+(x)\ge t$.
Conversely, we see that if $Tu_+(x)>t$, then $T\ch_{\{u_+>t\}}(x)=1$.
In conclusion,
\[
\ch_{\{Tu_+> t\}} \le T\ch_{\{u_+>t\}}\le \ch_{\{Tu_+\ge t\}}.
\]
However, $P(\Omega,\{Tu_+=t\})=0$ for $\mathcal L^1$-a.e. $t\in (0,1)$,  since
$P(\Omega,\cdot)$ is a finite measure.
Thus \eqref{eq:caval} becomes
\[
I(u_+)= \int_0^1 \left( P(\{u_+>t\},\Omega) +
\int_{\partial^*\Omega} T\ch_{\{u_+>t\}}\,f\, dP(\Omega,\cdot)\right)\,dt,
\]
that is,
\begin{equation}\label{eq:caval2}
I(u_+)= \int_0^1 I(\ch_{\{{u_+}>t\}})\,dt.
\end{equation}
Thus there is $t_1\in (0,1)$ such that
$I(\ch_{\{{u_+}>t_1\}})\le I(u_+)$,
which is the same as
$I(\ch_{\{{u}>t_1\}})\le I(u_+)$.

Denoting $I(\cdot)=I_f(\cdot)$ to make the dependence on $f$ explicit,
with the substitutions of $f$ by $-f$ and $u_+$ by $u_-$, inequality~\eqref{eq:caval2}
becomes
\begin{equation}\label{eq:caval3}
I_{f}(-u_-)=I_{-f}(u_-)= \int_0^1 I_{-f}(\ch_{\{{u_-}>t\}})\,dt
=\int_0^1 I_{f}(-\ch_{\{{u_-}>t\}})\,dt.
\end{equation}
Therefore, there is $t_2\in (0,1)$ such that
$I_f(-\ch_{\{u_->t_2\}})\le I_{f}(-u_-)$,
i.e., $I_f(-\ch_{\{u<-t_2\}})\le I_{f}(-u_-)$. Letting $E_1=\{u>t_1\}$ and
$E_2=\{u<-t_2\}$, we now have by Lemma \ref{lem:splitting into positive and negative parts}
\[
I(\ch_{E_1}-\ch_{E_2})=I(\ch_{E_1})+I(-\ch_{E_2})\le I(u_+)+I(-u_-)=I(u),
\]
proving the first claim.

Now let $u$ be a solution, and $t_1$ and $t_2$ as above.
If we had $I(\ch_{\{u>s\}})<I(u_+)$ for some $s\in (0,1)$, then
\[
I(\ch_{\{u>s\}})+I(-\ch_{\{u<-t_2\}})<I(u),
\]
which contradicts $u$ being a solution. Thus from \eqref{eq:caval2} it follows that
$I(\ch_{\{u>s\}})=I(u_+)$ for $\mathcal L^1$-a.e. $s\in (0,1)$.
Analogously, using \eqref{eq:caval3}
we find that $I(-\ch_{\{u<-s\}})=I(-u_-)$ for $\mathcal L^1$-a.e. $s\in (0,1)$,
and this proves the second claim.
\end{proof}
\begin{lemma}\label{lem:I(E)=I(-E)}
If $E\subset\Omega$ is of finite perimeter in $\Om$, then
$I(-\ch_{\Om\setminus E})=I(\ch_E)$.
Thus, if $E_1,E_2\subset \Omega$ are disjoint sets such that $\ch_{E_1}-\ch_{E_2}$ solves the restricted Neumann problem, then necessarily $I(\ch_{E_1})=I(-\ch_{E_2})$,
and $\ch_{E_1}-\ch_{\Om\setminus E_1}$ is also a solution.
\end{lemma}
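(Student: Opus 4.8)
The plan is to prove the identity $I(-\ch_{\Om\setminus E})=I(\ch_E)$ directly, and then derive the two consequences as bookkeeping. For the identity, recall from the excerpt that $P(\Om\setminus E,\Om)=P(E,\Om)$, so the total variation terms $\|D\ch_{E}\|(\Om)$ and $\|D\ch_{\Om\setminus E}\|(\Om)$ agree. It remains to compare the boundary terms. The key point is to understand the trace $T\ch_{\Om\setminus E}$ in terms of $T\ch_E$ at $P(\Om,\cdot)$-a.e.\ boundary point. Since $\ch_{\Om\setminus E}=1-\ch_E$ on $\Om$, the definition of trace gives that wherever $T\ch_E(x)$ exists we also have $T\ch_{\Om\setminus E}(x)$ existing and $T\ch_{\Om\setminus E}(x)=1-T\ch_E(x)$; but because $\ch_E$ is $\{0,1\}$-valued, its trace, where it exists, is itself $0$ or $1$ (one can see this from the averaged-integral definition, or invoke that $T\ch_E(x)\in\{0,1\}$ for $\hcal$-a.e.\ $x\in\partial^*\Om$ as in the setup of this section). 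Hence $T\ch_{\Om\setminus E}=1-T\ch_E$ a.e.\ on $\partial^*\Om$ with respect to $P(\Om,\cdot)$. Therefore
\[
\int_{\partial^*\Om}T(-\ch_{\Om\setminus E})\,f\,dP(\Om,\cdot)
=-\int_{\partial^*\Om}(1-T\ch_E)\,f\,dP(\Om,\cdot)
=-\int_{\partial^*\Om}f\,dP(\Om,\cdot)+\int_{\partial^*\Om}T\ch_E\,f\,dP(\Om,\cdot),
\]
and the first term on the right vanishes by the standing assumption \eqref{eq:f integrates to zero}. This gives exactly the boundary term of $I(\ch_E)$, proving $I(-\ch_{\Om\setminus E})=I(\ch_E)$.

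For the second assertion, suppose $\ch_{E_1}-\ch_{E_2}$ solves the restricted Neumann problem. By Lemma~\ref{lem:splitting into positive and negative parts} (applied with $u=\ch_{E_1}-\ch_{E_2}$, whose positive part is $\ch_{E_1}$ and whose negative part is $\ch_{E_2}$, using disjointness of $E_1,E_2$) we have $I(\ch_{E_1}-\ch_{E_2})=I(\ch_{E_1})+I(-\ch_{E_2})$. I then argue that each summand must individually be minimal: if, say, $I(\ch_{E_1})$ could be strictly decreased by replacing $E_1$ with some admissible set, then replacing $\ch_{E_1}$ in $\ch_{E_1}-\ch_{E_2}$ would strictly decrease $I$, contradicting minimality; but more simply, the zero function is admissible, so $I(\ch_{E_1}-\ch_{E_2})\le I(0)=0$, and since $I(\ch_{E_1})\ge \inf I \ge$ (value at $0$ restricted appropriately) — actually the cleanest route is: $I(\ch_{E_1})+I(-\ch_{E_2})\le 0$ while by testing the zero competitor against the ``half'' problems one still needs a comparison. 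I expect the intended argument is: both $\ch_{E_1}$ and $-\ch_{E_2}$ are themselves admissible competitors with values in $[-1,1]$, so $I(\ch_{E_1})\ge I(\ch_{E_1}-\ch_{E_2})$ would follow if $\ch_{E_1}-\ch_{E_2}$ were optimal — hence $I(-\ch_{E_2})\le 0$ and symmetrically $I(\ch_{E_1})\le 0$; combined with the already noted fact from Proposition~\ref{prop:existence of set minimizers} and Lemma~\ref{lem:splitting into positive and negative parts} that optimal splitting forces $I(\ch_{E_1})=I(u_+)$ and $I(-\ch_{E_2})=I(-u_-)$ minimal, one gets $I(\ch_{E_1})=I(-\ch_{E_2})$, each equal to half of $I(\ch_{E_1}-\ch_{E_2})$.

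Finally, for the last claim that $\ch_{E_1}-\ch_{\Om\setminus E_1}$ is also a solution: note $\ch_{E_1}-\ch_{\Om\setminus E_1}=2\ch_{E_1}-1$, which has values in $[-1,1]$, so it is admissible. Using Lemma~\ref{lem:splitting into positive and negative parts} its value is $I(\ch_{E_1})+I(-\ch_{\Om\setminus E_1})$, and by the identity just proved $I(-\ch_{\Om\setminus E_1})=I(\ch_{E_1})$, hence $I(\ch_{E_1}-\ch_{\Om\setminus E_1})=2I(\ch_{E_1})=I(\ch_{E_1})+I(-\ch_{E_2})=I(\ch_{E_1}-\ch_{E_2})$, so it attains the same (optimal) value. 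The main obstacle, and the only nontrivial analytic point, is justifying $T\ch_{\Om\setminus E}=1-T\ch_E$ $P(\Om,\cdot)$-a.e.\ — i.e.\ that traces of characteristic functions are $\{0,1\}$-valued where they exist and behave additively under complementation; everything else is algebra with Lemma~\ref{lem:splitting into positive and negative parts} and the zero-mean condition on $f$.
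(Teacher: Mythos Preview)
Your proof of the identity $I(-\ch_{\Om\setminus E})=I(\ch_E)$ is correct and matches the paper's. One small simplification: you do not need the traces of characteristic functions to be $\{0,1\}$-valued. The relation $T\ch_{\Om\setminus E}=1-T\ch_E$ follows directly from linearity of the trace where it exists, since $\ch_E+\ch_{\Om\setminus E}=\ch_\Om$ on $\Om$ and $T\ch_\Om=1$ at every point of $\partial^*\Om$ where the trace exists. This is exactly how the paper writes it.

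The gap is in your argument for $I(\ch_{E_1})=I(-\ch_{E_2})$. You cycle through several attempts --- comparing with the zero competitor, noting $I(\ch_{E_1})\le 0$ and $I(-\ch_{E_2})\le 0$, invoking Proposition~\ref{prop:existence of set minimizers} --- but none of these yields the \emph{equality}. Knowing that both quantities are $\le 0$ does not force them to coincide, and Proposition~\ref{prop:existence of set minimizers} applied to $u=\ch_{E_1}-\ch_{E_2}$ only tells you $I(\ch_{\{u>t\}})=I(u_+)=I(\ch_{E_1})$, which is vacuous here. Your final paragraph then \emph{uses} the equality $I(\ch_{E_1})=I(-\ch_{E_2})$ (in the step $2I(\ch_{E_1})=I(\ch_{E_1})+I(-\ch_{E_2})$), so the argument is circular as written.

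The clean fix, which is the paper's argument, is to use the identity you just proved to build a strictly better competitor. Suppose $I(\ch_{E_1})<I(-\ch_{E_2})$. By the identity, $I(-\ch_{\Om\setminus E_1})=I(\ch_{E_1})<I(-\ch_{E_2})$, and then by Lemma~\ref{lem:splitting into positive and negative parts},
\[
I(\ch_{E_1}-\ch_{\Om\setminus E_1})=I(\ch_{E_1})+I(-\ch_{\Om\setminus E_1})<I(\ch_{E_1})+I(-\ch_{E_2})=I(\ch_{E_1}-\ch_{E_2}),
\]
contradicting minimality of $\ch_{E_1}-\ch_{E_2}$. The reverse strict inequality is ruled out symmetrically. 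Once equality is in hand, your final paragraph correctly shows that $\ch_{E_1}-\ch_{\Om\setminus E_1}$ attains the same minimal value.
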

\begin{proof}
If $P(E,\Om)<\infty$, note that $P(\Om\setminus E,\Omega)=P(E,\Omega)$, and that
for $\hcal$-a.e.\@ $x\in\partial^*\Omega$,
\[
T\ch_{E}(x)+T\ch_{\Om\setminus E}(x)= T(\ch_{E}(x)+\ch_{\Om\setminus E}(x))=T\ch_\Om(x)=1.
\]
Thus,
\begin{align*}
I(-\ch_{\Om\setminus E})
& =P(\Om\setminus E,\Omega)-\int_{\partial^*\Omega}T\ch_{\Om\setminus E} \,f\,dP(\Omega,\cdot)\\
&  =P(E,\Omega)-\int_{\partial^*\Omega}T\ch_{\Om\setminus E} \,f\,dP(\Omega,\cdot)\\
&  =P(E,\Omega)-\int_{\partial^*\Omega}T\ch_{\Om\setminus E} \,f\,dP(\Omega,\cdot)+\int_{\partial^*\Omega}\,f\,dP(\Omega,\cdot)\quad\textrm{by }
\eqref{eq:f integrates to zero}\\
&  =P(E,\Omega)+\int_{\partial^*\Omega}T\ch_{E}\,f\,dP(\Omega,\cdot)\\
&  =I(\ch_E).
\end{align*}
Next, let $E_1,E_2\subset \Omega$ be disjoint sets such that $\ch_{E_1}-\ch_{E_2}$ solves the restricted Neumann problem. If $I(\ch_{E_1})<I(-\ch_{E_2})$, then by
the above, we also have $I(-\ch_{\Om\setminus E_1})<I(-\ch_{E_2})$. Then, by
Lemma~\ref{lem:splitting into positive and negative parts},
\[
I(\ch_{E_1}-\ch_{\Om\setminus E_1}) 
= I(\ch_{E_1})+I(-\ch_{\Om\setminus E_1})
< I(\ch_{E_1})+I(-\ch_{E_2})
=I(\ch_{E_1}-\ch_{E_2}),
\]
a contradiction. Similarly, $I(\ch_{E_1})>I(-\ch_{E_2})$ is impossible.
Moreover, now
\begin{align*}
I(\ch_{E_1}-\ch_{\Om\setminus E_1})
& =I(\ch_{E_1})+I(-\ch_{\Om\setminus E_1}) 
\\
& =2I(\ch_{E_1})  =I(\ch_{E_1})+I(-\ch_{ E_2})
=I(\ch_{E_1}-\ch_{E_2}),
\end{align*}
so that $\ch_{E_1}-\ch_{\Om\setminus E_1}$ is also a solution.
\end{proof}
\begin{lemma}\label{lem:both E1 and E2 are minimizers}
Let $E_1,E_2\subset \Om$ be disjoint
$\mu$-measurable sets.
Then, $\ch_{E_1}-\ch_{E_2}$ solves the restricted Neumann problem if and only if
\[
I(\ch_{E_1})\le I(\ch_F)\quad\textrm{and}\quad I(-\ch_{E_2})\le I(-\ch_F)
\]
for all $\mu$-measurable sets $F\subset\Om$.
\end{lemma}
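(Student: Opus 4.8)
The plan is to exploit the additive splitting of $I$ from Lemma~\ref{lem:splitting into positive and negative parts}, which says $I(v)=I(v_+)+I(-v_-)$, together with the fact that the positive and negative parts of a competitor can be chosen completely independently of one another. The key observation is that for disjoint $\mu$-measurable sets $E_1, E_2\subset\Omega$, the function $v=\ch_{E_1}-\ch_{E_2}$ satisfies $v_+=\ch_{E_1}$ and $v_-=\ch_{E_2}$, so $I(\ch_{E_1}-\ch_{E_2})=I(\ch_{E_1})+I(-\ch_{E_2})$. This decouples the minimization into two independent subproblems.

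For the backward implication ($\Leftarrow$), suppose $I(\ch_{E_1})\le I(\ch_F)$ and $I(-\ch_{E_2})\le I(-\ch_F)$ for all $\mu$-measurable $F\subset\Omega$. Let $v\in\BV(\Omega)$ with $-1\le v\le 1$ be arbitrary. By Proposition~\ref{prop:existence of set minimizers} (more precisely, by the computation \eqref{eq:caval2} and its analogue \eqref{eq:caval3} applied in the proof there, which give $I(v_+)=\int_0^1 I(\ch_{\{v_+>t\}})\,dt$ and $I(-v_-)=\int_0^1 I(-\ch_{\{v_->t\}})\,dt$), there exist $\mu$-measurable sets $F_1=\{v>t_1\}$ and $F_2=\{v<-t_2\}$ with $I(\ch_{F_1})\le I(v_+)$ and $I(-\ch_{F_2})\le I(-v_-)$. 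Combining with the hypothesis and Lemma~\ref{lem:splitting into positive and negative parts},
\[
I(\ch_{E_1}-\ch_{E_2})=I(\ch_{E_1})+I(-\ch_{E_2})\le I(\ch_{F_1})+I(-\ch_{F_2})\le I(v_+)+I(-v_-)=I(v),
\]
so $\ch_{E_1}-\ch_{E_2}$ is a solution. (Note that $I(\ch_{E_i})$ should be read via the convention that extends $I$ to characteristic functions of finite-perimeter subsets of $\Omega$; if $P(F,\Omega)=\infty$ the inequality $I(\ch_{E_1})\le I(\ch_F)=\infty$ is vacuous, so we may restrict to finite-perimeter $F$ throughout.)

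For the forward implication ($\Rightarrow$), suppose $\ch_{E_1}-\ch_{E_2}$ is a solution. Fix a $\mu$-measurable $F\subset\Omega$ of finite perimeter. Then $\ch_F-\ch_{E_2}$ has positive part $\ch_{F\setminus E_2}$ and negative part $\ch_{E_2\setminus F}$, which is not quite a clean competitor against which to test; the cleaner route is to test $w=\ch_F-\ch_{\Omega\setminus F}$ (so $w_+=\ch_F$, $w_-=\ch_{\Omega\setminus F}$) against $\ch_{E_1}-\ch_{E_2}$ and use Lemma~\ref{lem:I(E)=I(-E)}. Concretely: since $\ch_{E_1}-\ch_{E_2}$ is a solution, Lemma~\ref{lem:I(E)=I(-E)} gives $I(\ch_{E_1})=I(-\ch_{E_2})$ and that $\ch_{E_1}-\ch_{\Omega\setminus E_1}$ is also a solution; hence $2I(\ch_{E_1})=I(\ch_{E_1})+I(-\ch_{\Omega\setminus E_1})\le I(\ch_F)+I(-\ch_{\Omega\setminus F})=2I(\ch_F)$ (using $I(-\ch_{\Omega\setminus F})=I(\ch_F)$ from Lemma~\ref{lem:I(E)=I(-E)}), so $I(\ch_{E_1})\le I(\ch_F)$. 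Applying the same argument to the negative part — equivalently, replacing $f$ by $-f$ and using that $\ch_{\Omega\setminus E_2}-\ch_{E_2}$ is then the corresponding solution — yields $I(-\ch_{E_2})\le I(-\ch_F)$.

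The main obstacle is purely bookkeeping: one must be careful that the two subproblems genuinely decouple, i.e.\ that an arbitrary competitor $v$ with $-1\le v\le 1$ can be compared to $\ch_{E_1}-\ch_{E_2}$ by comparing $v_+$ to $\ch_{E_1}$ and $-v_-$ to $-\ch_{E_2}$ separately. This is exactly what Lemma~\ref{lem:splitting into positive and negative parts} and the superlevel-set reduction of Proposition~\ref{prop:existence of set minimizers} provide, so no new estimates are needed; the only genuine subtlety is keeping the $f \leftrightarrow -f$ symmetry straight when handling the negative part, and handling the degenerate case $P(F,\Omega)=\infty$ by the convention above.
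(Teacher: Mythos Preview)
Your proof is correct and follows essentially the same approach as the paper: both directions rely on Lemma~\ref{lem:splitting into positive and negative parts} to decouple the problem, on Lemma~\ref{lem:I(E)=I(-E)} to pass between $I(\ch_E)$ and $I(-\ch_{\Om\setminus E})$, and on Proposition~\ref{prop:existence of set minimizers} to reduce arbitrary competitors to characteristic functions. The only cosmetic differences are that the paper argues the forward implication by contradiction rather than directly, and cites Proposition~\ref{prop:existence of set minimizers} as a black box for the backward implication rather than unpacking \eqref{eq:caval2}--\eqref{eq:caval3}; your aside about ``replacing $f$ by $-f$'' is unnecessary (and slightly muddled), since $\ch_{\Om\setminus E_2}-\ch_{E_2}$ is already a solution for $f$ by Lemma~\ref{lem:I(E)=I(-E)} and the direct argument you give works without any sign swap.
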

\begin{proof}
Suppose that $\ch_{E_1}-\ch_{E_2}$ is a solution.
If there is a $\mu$-measurable set $F\subset \Om$ with $I(\ch_F)<I(\ch_{E_1})$, then
by Lemma \ref{lem:splitting into positive and negative parts} and
Lemma \ref{lem:I(E)=I(-E)}
\begin{align*}
I(\ch_F-\ch_{\Om\setminus F})
&=I(\ch_F)+I(-\ch_{\Om\setminus F})\\
&=2I(\ch_F)
<2I(\ch_{E_1})=I(\ch_{E_1})+I(-\ch_{E_2})=I(\ch_{E_1}-\ch_{E_2}),
\end{align*}
a contradiction. Similarly, $I(-\ch_{F})<I(-\ch_{E_2})$ is impossible.

If $E_1,E_2\subset\Omega$ are such that $I(\ch_{E_1})\le I(\ch_F)$ and
$I(-\ch_{E_2})\le I(-\ch_F)$ for all $\mu$-measurable sets $F\subset\Om$, then
\[
I(\ch_{E_1}-\ch_{E_2})=I(\ch_{E_1})+I(-\ch_{E_2})\le I(\ch_{F_1})+I(-\ch_{F_2})
=I(\ch_{F_1}-\ch_{F_2})
\]
for any two disjoint $\mu$-measurable sets $F_1,F_2\subset\Om$. In view of
Proposition~\ref{prop:existence of set minimizers}, $\ch_{E_1}-\ch_{E_2}$ must be a solution.
\end{proof}
Recall that a function $u\in \BV(\Om)$ is of least gradient in $\Om$ if
\[
 \| Du\|(\Om)\le \| D(u+\varphi)\|(\Om)
\]
for every $\varphi\in \BV(\Om)$ with compact support in $\Om$.
\begin{proposition}\label{prop:E1-leastGrad}
Let $E_1, E_2\subset\Om$ be disjoint sets such that $\ch_{E_1}-\ch_{E_2}$ solves the restricted Neumann problem.
Then, $\ch_{E_1}$ and $\ch_{E_2}$ are functions of least gradient in $\Om$.
\end{proposition}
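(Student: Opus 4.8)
The plan is to argue by contradiction. Suppose $\ch_{E_1}$ is not of least gradient in $\Om$; then there is $\varphi\in\BV(\Om)$ with compact support in $\Om$ such that $\|D(\ch_{E_1}+\varphi)\|(\Om)<\|D\ch_{E_1}\|(\Om)$. My first move is to reduce to the case where the competitor is again the characteristic function of a set contained in $\Om$. Let $v=\ch_{E_1}+\varphi$; since $\varphi$ has compact support, $v=\ch_{E_1}$ outside a compact $K\Subset\Om$, so $Tv=T\ch_{E_1}$ on $\partial^*\Om$, and hence $I(v)<I(\ch_{E_1})$. Now I would truncate: replacing $v$ by $\min\{\max\{v,0\},1\}$ only decreases the total variation (by the coarea formula and the fact that truncation removes level sets) while leaving the trace unchanged, so we may assume $0\le v\le 1$. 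Apply the coarea formula together with the Cavalieri argument from the proof of Proposition~\ref{prop:existence of set minimizers}: exactly as in \eqref{eq:caval2},
\[
I(v)=\int_0^1 I(\ch_{\{v>t\}})\,dt,
\]
and since $\{v>t\}=\{\ch_{E_1}+\varphi>t\}$ agrees with $E_1$ outside $K$ for every $t\in(0,1)$, each set $F_t\coloneq\{v>t\}$ is contained in $\Om$ (more precisely $F_t\symdiff E_1\Subset\Om$). Because $I(v)<I(\ch_{E_1})$, there must be some $t\in(0,1)$ with $I(\ch_{F_t})<I(\ch_{E_1})$.

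Now I invoke Lemma~\ref{lem:both E1 and E2 are minimizers}: since $\ch_{E_1}-\ch_{E_2}$ solves the restricted Neumann problem, we have $I(\ch_{E_1})\le I(\ch_F)$ for \emph{all} $\mu$-measurable $F\subset\Om$. Applying this with $F=F_t$ contradicts $I(\ch_{F_t})<I(\ch_{E_1})$. Hence $\ch_{E_1}$ is of least gradient in $\Om$. The argument for $\ch_{E_2}$ is symmetric: if $\ch_{E_2}$ is not of least gradient, one produces $F\subset\Om$ with $I(-\ch_F)<I(-\ch_{E_2})$ after applying the coarea/Cavalieri computation to $-\ch_{E_2}$ with $f$ replaced by $-f$ (as in \eqref{eq:caval3}), and this contradicts the second inequality $I(-\ch_{E_2})\le I(-\ch_F)$ from Lemma~\ref{lem:both E1 and E2 are minimizers}.

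The main obstacle is the bookkeeping in the reduction step: I need to be careful that perturbing $\ch_{E_1}$ by a compactly supported $\varphi$ and then truncating and slicing keeps all the resulting sets $F_t$ inside $\Om$, so that they are legitimate competitors in Lemma~\ref{lem:both E1 and E2 are minimizers}, and that the boundary traces are genuinely unaffected — this is where the compact support of $\varphi$ is essential, since it guarantees $Tv=T\ch_{E_1}$ on $\partial^*\Om$ and hence that the boundary integral term in $I$ does not change under the perturbation. Everything else (truncation not increasing total variation, the coarea identity $I(v)=\int_0^1 I(\ch_{\{v>t\}})\,dt$ for $0\le v\le 1$) is already available from the coarea formula \eqref{eq:coarea} and the computation in the proof of Proposition~\ref{prop:existence of set minimizers}.
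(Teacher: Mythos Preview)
Your argument is correct and follows essentially the same line as the paper's proof: both rely on Lemma~\ref{lem:both E1 and E2 are minimizers} to obtain $I(\ch_{E_1})\le I(\ch_F)$ for all $F\subset\Om$, and then observe that for competitors agreeing with $\ch_{E_1}$ near $\partial\Om$ the boundary integrals cancel, leaving the desired perimeter inequality. The only difference is that the paper shortcuts your truncation--coarea reduction by quoting \cite[Lemma~3.2]{KKLS}, which says directly that to verify the least-gradient property for a characteristic function it suffices to test against sets $F$ with $F\symdiff E_1\Subset\Om$; your argument effectively re-derives this reduction by hand.
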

\begin{proof}
To show that $\ch_{E_1}$ is a function of least gradient, it suffices to show that $\| D\ch_{E_1}\|(\Om)\le \| D\ch_F\|(\Om)$ whenever $F\subset\Om$
is a $\mu$-measurable set with $F\symdiff E_1\Subset\Om$,
see~\cite[Lemma~3.2]{KKLS}. Let $F$ be such a set. By Lemma~\ref{lem:both E1 and E2 are minimizers}, 
$I(\ch_{E_1})\le I(\ch_F)$.
On the other hand, $\ch_{E_1}=\ch_F$ in a neighborhood of $\partial\Om$. It follows that
\[
\| D\ch_{E_1}\|(\Om)\le \| D\ch_F\|(\Om),
\]
so that $\ch_{E_1}$ is of least gradient. The proof for $E_2$ is analogous.
\end{proof}
The above is our main result on the interior regularity of solutions; from the proposition it
follows that the sets $E_1,E_2$ and their complements are porous in $\Om$, see
\cite[Theorem 5.2]{KKLS}.

Since solutions can be constructed from sets $E$ of finite perimeter in $\Omega$ and 
since $\Omega$ is itself of finite perimeter in $X$, it is useful to know that
the sets $E$ are also of finite perimeter in $X$.
\begin{theorem}[{\cite[Corollary 6.13]{KLLS}}]\label{thm:extendability of sets of finite perimeter}
Assume that $\widehat{\Omega}\subset X$ is a bounded open set with $P(\widehat{\Omega},X)<\infty$,
and suppose that there exists
$N\subset \partial\widehat{\Omega}$ with $\hcal(N)<\infty$ such that
\[
\limsup_{r\to 0}\frac{\mu(B(x,r)\setminus\widehat{\Omega})}{\mu(B(x,r))}>0
\]
for every $x\in \partial\widehat{\Omega}\setminus N$.
Let $E\subset \widehat{\Omega}$ such that $P(E,\widehat{\Omega})<\infty$. Then 
$E$ is of finite perimeter in $X$.
\end{theorem}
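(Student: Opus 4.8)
The plan is to reduce the claim to a Federer-type criterion: in a complete doubling space supporting a Poincar\'e inequality, a $\mu$-measurable set $E$ with $\mu(E)<\infty$ has finite perimeter in $X$ provided $\hcal(\partial^*E)<\infty$ (cf.\ \cite{A1,AMP}). Since $\widehat{\Omega}$ is bounded and $\mu$ is doubling we have $\mu(\widehat{\Omega})<\infty$, hence $\mu(E)<\infty$, so it is enough to prove $\hcal(\partial^*E)<\infty$.

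First I would localize $\partial^*E$. If $x\in X\setminus\overline{\widehat{\Omega}}$ then some ball $B(x,r_0)$ misses $\widehat{\Omega}$, hence misses $E$, so $x\in O_E$ and thus $x\notin\partial^*E$. Therefore $\partial^*E\subset\widehat{\Omega}\cup\partial\widehat{\Omega}$, and since this is a disjoint union it suffices to estimate $\hcal$ on the two pieces separately. For the interior piece, $\mu(\widehat{\Omega})<\infty$ together with $P(E,\widehat{\Omega})<\infty$ gives $\ch_E\in\BV(\widehat{\Omega})$, and \eqref{eq:def of theta} applied in the open set $\widehat{\Omega}$ with $A=\widehat{\Omega}$ yields
\[
\alpha\,\hcal(\partial^*E\cap\widehat{\Omega})\le\int_{\partial^*E\cap\widehat{\Omega}}\theta_E\,d\hcal=P(E,\widehat{\Omega})<\infty.
\]

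The main step is the boundary piece, where I would exploit the exterior density hypothesis to establish the inclusion $\partial^*E\cap\partial\widehat{\Omega}\subset\partial^*\widehat{\Omega}\cup N$. Indeed, take $x\in\partial^*E\cap\partial\widehat{\Omega}$ with $x\notin N$. Since $x\in\partial^*E$ we have $\limsup_{r\to0}\mu(B(x,r)\cap E)/\mu(B(x,r))>0$, and as $E\subset\widehat{\Omega}$ this forces $\limsup_{r\to0}\mu(B(x,r)\cap\widehat{\Omega})/\mu(B(x,r))>0$; meanwhile $x\in\partial\widehat{\Omega}\setminus N$ and the standing hypothesis give $\limsup_{r\to0}\mu(B(x,r)\setminus\widehat{\Omega})/\mu(B(x,r))>0$, so $x\in\partial^*\widehat{\Omega}$. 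Now $P(\widehat{\Omega},X)<\infty$ and \eqref{eq:def of theta} (used on a ball containing $\overline{\widehat{\Omega}}$) give $\hcal(\partial^*\widehat{\Omega})\le\alpha^{-1}P(\widehat{\Omega},X)<\infty$, and $\hcal(N)<\infty$ by assumption, so $\hcal(\partial^*E\cap\partial\widehat{\Omega})<\infty$. Adding the two estimates gives $\hcal(\partial^*E)<\infty$, whence $E$ is of finite perimeter in $X$; a quantitative version of the criterion even bounds $P(E,X)$ by a constant times $P(E,\widehat{\Omega})+P(\widehat{\Omega},X)+\hcal(N)$. The one nonelementary ingredient, and the point I expect to require the most care, is the sharp metric-space form of Federer's criterion; the rest is a density comparison that uses only the inclusion $E\subset\widehat{\Omega}$.
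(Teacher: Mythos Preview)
The paper does not prove this theorem at all; it merely quotes it from \cite[Corollary~6.13]{KLLS}. So there is no in-paper proof to compare with, and your argument has to be judged on its own.

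Your density and inclusion steps are correct: $\partial^*E\subset\widehat{\Omega}\cup\partial\widehat{\Omega}$, the bound $\hcal(\partial^*E\cap\widehat{\Omega})\le\alpha^{-1}P(E,\widehat{\Omega})$ from \eqref{eq:def of theta}, the inclusion $\partial^*E\cap\partial\widehat{\Omega}\subset\partial^*\widehat{\Omega}\cup N$, and $\hcal(\partial^*\widehat{\Omega})\le\alpha^{-1}P(\widehat{\Omega},X)$ are all fine. The whole weight of the proof thus rests on the Federer-type criterion you invoke at the start: $\hcal(\partial^*E)<\infty\Rightarrow P(E,X)<\infty$.

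Here there is a genuine issue with your citation. The references \cite{A1,AMP} establish the \emph{easy} direction --- that a set of finite perimeter has $\hcal(\partial^*E)<\infty$, via the integral representation \eqref{eq:def of theta} --- not the converse. The hard direction (Federer's characterization) in the metric setting is a separate, substantially deeper result, proved by Lahti only in 2018 (Anal.\ PDE 13 (2020), 1501--1519), i.e.\ \emph{after} the KLLS preprint you are trying to recover. So while your reduction is now valid, it is anachronistic relative to \cite{KLLS}, and the attribution to \cite{A1,AMP} is incorrect. You rightly flag this step as the one requiring care; in fact it is essentially the entire content of the theorem, and the original proof in \cite{KLLS} must proceed differently (via trace/extension-type arguments rather than Federer's criterion). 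If you want to keep your argument, replace the citation by the correct one for the metric Federer characterization and note that this gives an alternative route to the KLLS result.
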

Note that if $\widehat{\Om}$ satisfies the condition listed in~\eqref{eq:exterior measure density condition}, 
then $\hcal(N)=0$ above. 
\begin{lemma}\label{lem:coincidence of perimeter of E and Omega}
Assume that $\Om$ satisfies the exterior measure density condition
\eqref{eq:exterior measure density condition}.
Let $E\subset \Omega$ be a $\mu$-measurable set with $P(E,X)<\infty$. Then, for any
Borel set $A\subset \partial\Om$, we have
\[
P(E,A)=P(\Omega,A\cap \{T\ch_E=1\}).
\]
\end{lemma}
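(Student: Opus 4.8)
The plan is to compare the perimeter of $E$ with that of $\Om$ by passing through the auxiliary set $F:=\Om\setminus E$, using that the condition $T\ch_E=1$ forces $F$ to be measure-theoretically negligible at the point in question.

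First I would collect three elementary facts. (i) Since $\ch_F=\ch_\Om-\ch_E$ and both $\Om$ and $E$ have finite perimeter in $X$, so does $F$; hence by \eqref{eq:def of theta} the measure $P(F,\cdot)=\theta_F\,\hcal\llcorner\partial^*F$ vanishes on every Borel subset of $O_F$. (ii) Wherever the trace $T\ch_E$ exists it equals $0$ or $1$: if $\vint{B(x,r)\cap\Om}|\ch_E-c|\,d\mu\to0$ with $c\in(0,1)$, then both $\mu(B(x,r)\cap E)/\mu(B(x,r)\cap\Om)$ and $\mu(B(x,r)\cap(\Om\setminus E))/\mu(B(x,r)\cap\Om)$ would have to tend to $0$, which is impossible since they sum to $1$. (iii) If $T\ch_E(x)=1$ then $\mu(B(x,r)\cap F)/\mu(B(x,r)\cap\Om)\to0$, and since $\mu(B(x,r)\cap\Om)\le\mu(B(x,r))$ also $\mu(B(x,r)\cap F)/\mu(B(x,r))\to0$; thus $\{T\ch_E=1\}\subset O_F$. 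Symmetrically, $T\ch_E(x)=0$ implies $x\in O_E$.

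The core of the argument is to show that, up to sets of $\hcal$-measure zero,
\[
\partial^*E\cap\dOm=\partial^*\Om\cap\{T\ch_E=1\}.
\]
For the inclusion $\subset$: if $x\in\partial^*E\cap\dOm$, then $E\subset\Om$ forces $\Om$ to have positive upper density at $x$, and \eqref{eq:exterior measure density condition} gives positive upper density of $X\setminus\Om$ at $\hcal$-a.e.\ such $x$, so $x\in\partial^*\Om$; there (applying the standing hypothesis on $\Om$ to $\ch_E\in\BV(\Om)$) the trace $T\ch_E(x)$ exists and, by (ii), equals $0$ or $1$; it cannot be $0$, since then $x\in O_E$ would contradict $x\in\partial^*E$; hence $T\ch_E(x)=1$. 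For the inclusion $\supset$: if $x\in\partial^*\Om$ and $T\ch_E(x)=1$, then $x\in O_F$ by (iii), so $\mu(B(x,r)\cap(E\symdiff\Om))/\mu(B(x,r))=\mu(B(x,r)\cap F)/\mu(B(x,r))\to0$; hence $E$ and $\Om$ have the same upper densities at $x$, and $x\in\partial^*\Om$ then yields $x\in\partial^*E$.

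To conclude, fix a Borel set $A\subset\dOm$ and put $A':=A\cap\partial^*\Om\cap\{T\ch_E=1\}$. By the identification above, $\partial^*E\cap A$ and $A'$ coincide up to an $\hcal$-null set; since $P(E,\cdot)$ is carried by $\partial^*E$ and absolutely continuous with respect to $\hcal$, this gives $P(E,A)=P(E,A')$. Now $A'\subset O_F$ by (iii), and from $\ch_E=\ch_\Om-\ch_F$ together with $\ch_\Om=\ch_E+\ch_F$ and subadditivity of the total variation — applied as an inequality between the finite Radon measures $\|D\cdot\|(\cdot)$, which is legitimate by passing from open sets to Borel sets via outer regularity — we obtain $|P(E,A')-P(\Om,A')|\le P(F,A')=0$ by (i). Finally $P(\Om,\cdot)$ is carried by $\partial^*\Om$, so $P(\Om,A')=P(\Om,A\cap\{T\ch_E=1\})$, and chaining the three equalities proves the lemma. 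I expect the set-identification step to be the main obstacle: the $\hcal$-a.e.\ equality of $\partial^*E\cap\dOm$ and $\partial^*\Om\cap\{T\ch_E=1\}$ is precisely where \eqref{eq:exterior measure density condition} and the trace-existence hypothesis on $\Om$ are genuinely used, and it requires careful bookkeeping of upper densities and of where the trace is defined; a secondary point worth stating explicitly is the passage of subadditivity of $\|D\cdot\|$ from open to arbitrary Borel sets.
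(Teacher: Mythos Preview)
Your argument is correct, and the set-identification step matches the paper's reasoning. The two proofs diverge only at the final step, where you must pass from ``$\partial^*E$ and $\partial^*\Om$ agree $\hcal$-a.e.\ on $A'$'' to ``$P(E,A')=P(\Om,A')$''. The paper invokes \cite[Proposition~6.2]{AFP} to conclude that $\theta_E=\theta_\Om$ $\hcal$-a.e.\ on $\{T\ch_E=1\}$, and then reads off the equality of perimeters directly from the integral representation~\eqref{eq:def of theta}. You instead exploit the auxiliary set $F=\Om\setminus E$ and the sandwich inequality $|P(E,\cdot)-P(\Om,\cdot)|\le P(F,\cdot)$ on Borel sets, which vanishes on $A'\subset O_F$. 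Your route is more self-contained --- it avoids the external reference on the density function $\theta$ --- at the cost of the outer-regularity step extending subadditivity from open to Borel sets; that step is routine (approximate $A'$ from outside by open sets on which $P(\Om,\cdot)$ and $P(F,\cdot)$ are within $\eps$ of their values on $A'$, and use $P(E,A')\le P(E,U)$), and your proof would benefit from writing it out explicitly rather than leaving it as a parenthetical remark.
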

By Theorem \ref{thm:extendability of sets of finite perimeter}, we can equally well only
assume that $P(E,\Om)<\infty$.
\begin{proof}
Note that the trace $T\ch_E(x)$ is defined for $\mathcal H$-a.e. $x\in\partial^*\Om$ and
can only take the values $0$ and $1$. Also, $P(E,\cdot)$ is concentrated on $\partial^*E$, and
$(\partial\Om\setminus\partial^*\Om)\cup\{T\ch_E=0\}\subset O_E$, i.e., the measure
theoretic exterior of $E$ as defined by \eqref{eq:definition of measure theoretic exterior}.
Thus, we have
\begin{equation}\label{eq:perimeter concentrated on trace one set}
P(E,\partial\Omega\setminus \{T\ch_E=1\})\le
P(E,\partial\Om\setminus\partial^*\Om)+P(E,\partial^*\Omega\cap \{T\ch_E=0\})
=0.
\end{equation}
Consider a point $x\in\partial\Omega$ where $T\ch_E(x)=1$. We have
\[
\frac{\mu(B(x,r)\cap (\Omega \symdiff E))}{\mu(B(x,r)}\to 0\quad\textrm{as }r\to 0,
\]
so that $x\in\partial^*E$ if and only if $x\in\partial^*\Om$, and
according to \cite[Proposition 6.2]{AFP}, for $\hcal$-a.e.\@ such point  we have $\theta_{E}(x)=\theta_{\Omega}(x)$; recall \eqref{eq:def of theta}.
In total, by \eqref{eq:perimeter concentrated on trace one set} and by applying
\eqref{eq:def of theta} twice,
\begin{align*}
P(E,A)
 =P(E,A\cap \{T\ch_E=1\})
 & =\int_{A\cap \{T\ch_E=1\}\cap \partial^*E}\theta_E\,d\hcal \\
 =\int_{A\cap \{T\ch_E=1\}\cap \partial^*\Omega}\theta_{\Omega}\,d\hcal
& =P(\Omega,A\cap \{T\ch_E=1\}).
\qedhere
\end{align*}
\end{proof}
\begin{lemma}\label{lem:estimate for perimeter in whole space}
Suppose that $\Om$ satisfies the exterior measure density condition
\eqref{eq:exterior measure density condition}, and that $-1\le f\le 1$.
Let $E\subset \Om$ be of finite perimeter in $\Om$. Then
\[
P(E,X)\le  I(\ch_{E})+2P(\Omega,X).
\]
\end{lemma}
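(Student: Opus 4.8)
The statement to prove is: if $\Omega$ satisfies the exterior measure density condition \eqref{eq:exterior measure density condition} and $-1\le f\le 1$, then for any $E\subset\Omega$ of finite perimeter in $\Omega$,
\[
P(E,X)\le I(\ch_E)+2P(\Omega,X).
\]

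First I would split the perimeter of $E$ in the whole space into its part inside $\Omega$ and its part on the boundary:
\[
P(E,X)=P(E,\Omega)+P(E,\partial\Omega),
\]
using that $E\subset\Omega$ so that $\partial^*E\subset\overline{\Omega}$ and $P(E,\cdot)$ puts no mass outside $\overline\Omega$; note that Theorem~\ref{thm:extendability of sets of finite perimeter} (together with the remark that the exterior density condition makes $\hcal(N)=0$) guarantees $P(E,X)<\infty$, so this decomposition makes sense. The interior term $P(E,\Omega)$ is exactly $\|D\ch_E\|(\Omega)$, which appears in $I(\ch_E)=P(E,\Omega)+\int_{\partial^*\Omega}T\ch_E\,f\,dP(\Omega,\cdot)$. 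The boundary term I would control using Lemma~\ref{lem:coincidence of perimeter of E and Omega}: taking $A=\partial\Omega$ there gives
\[
P(E,\partial\Omega)=P(\Omega,\{T\ch_E=1\})\le P(\Omega,X).
\]

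It then remains to absorb the trace integral. Since $T\ch_E$ takes only the values $0$ and $1$ $P(\Omega,\cdot)$-a.e.\ and $-1\le f\le 1$, we have the crude bound
\[
\Bigl|\int_{\partial^*\Omega}T\ch_E\,f\,dP(\Omega,\cdot)\Bigr|\le \int_{\{T\ch_E=1\}}|f|\,dP(\Omega,\cdot)\le P(\Omega,\{T\ch_E=1\})\le P(\Omega,X).
\]
Putting the pieces together,
\[
P(E,X)=P(E,\Omega)+P(E,\partial\Omega)\le \Bigl(I(\ch_E)-\int_{\partial^*\Omega}T\ch_E\,f\,dP(\Omega,\cdot)\Bigr)+P(\Omega,X)\le I(\ch_E)+2P(\Omega,X),
\]
which is the claim.

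The only genuine subtlety — and the step I would treat most carefully — is the decomposition $P(E,X)=P(E,\Omega)+P(E,\partial\Omega)$: one must check that $P(E,\cdot)$ assigns no mass to $X\setminus\overline{\Omega}$ (immediate since $E\subset\Omega$ and on any ball centered in the open set $X\setminus\overline\Omega$ the characteristic function $\ch_E$ is locally constant, hence has zero variation there) and that the boundary $\partial\Omega$ is itself $P(E,\cdot)$-measurable, which follows because $P(E,\cdot)$ is a Radon measure and $\partial\Omega$ is closed. Everything else is a direct application of the lemmas already established, so no real obstacle remains beyond this bookkeeping.
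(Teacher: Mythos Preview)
Your proposal is correct and follows essentially the same argument as the paper: both use Theorem~\ref{thm:extendability of sets of finite perimeter} to get $P(E,X)<\infty$, split $P(E,X)=P(E,\Omega)+P(E,\partial\Omega)$, bound $P(E,\partial\Omega)\le P(\Omega,X)$ via Lemma~\ref{lem:coincidence of perimeter of E and Omega}, and use $|f|\le 1$ together with $0\le T\ch_E\le 1$ to bound the trace integral by $P(\Omega,X)$. Your treatment of the decomposition is in fact more careful than the paper's, which simply writes it without comment.
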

\begin{proof}
By Theorem \ref{thm:extendability of sets of finite perimeter}, $P(E,X)<\infty$.
By the definition of the functional and the fact that $-1\le f\le 1$,
\begin{align*}
P(E,\Omega)\le I(\ch_{E})+P(\Omega,X),
\end{align*}
whereas by Lemma \ref{lem:coincidence of perimeter of E and Omega},
$P(E,\partial\Omega)\le P(\Omega,X)$.
Thus we get
\[
P(E,X)=P(E,\Omega)+P(E,\partial\Omega)\le  I(\ch_{E})+2P(\Omega,X). \qedhere
\]
\end{proof}
\section{Existence of solutions}
\label{sec:existence}
In this section, we prove that under fairly mild assumptions on $\Om$,
solutions to the restricted Neumann problem given on page
\pageref{restricted minimization problem} exist. This is Theorem~\ref{thm:existence of solutions}.

We say that a set $A\subset X$ is \emph{$1$-quasiopen} if for every $\eps>0$ there is an
open set $G\subset X$ with $\capa_1(G)<\eps$ such that $A\cup G$ is open.
Note that $1$-quasiopen sets do not in general form a topology: as is noted in \cite{BBM}, all 
singletons in unweighted $\R^n$, $n\ge 2$, are $1$-quasiopen, but not all sets are $1$-quasiopen. 
Nonetheless, countable unions as well as finite intersections of $1$-quasiopen sets are $1$-quasiopen 
by~\cite[Lemma~2.3]{Fug71}.

The following lemma is well known in the Euclidean setting,
and has been proved in the metric setting in \cite[Lemma 3.8]{L-SA}.
\begin{lemma}\label{lem:variation measure and capacity}
	Let $\Omega\subset X$ be an open set, and
	let $u\in L^1_{\loc}(\Omega)$ with $\| Du\|(\Omega)<\infty$.
	Then for every $\eps>0$ there exists $\delta>0$ such that
	if $A\subset \Omega$ with $\capa_1(A)<\delta$, then $\| Du\|(A)<\eps$.
\end{lemma}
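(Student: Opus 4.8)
The plan is to deduce this uniform statement from the qualitative fact that the finite Radon measure $\|Du\|$ on $\Omega$ vanishes on every Borel set of $1$-capacity zero, via a Borel--Cantelli-type exhaustion argument; the qualitative fact itself I would read off from the coarea formula and~\eqref{eq:def of theta}. Since $\|Du\|(\Omega)<\infty$, the set function $\|Du\|(\cdot)$ extends to a finite Radon measure on $\Omega$, and $\capa_1$ is an outer, countably subadditive capacity (as $X$ supports a $(1,1)$-Poincar\'e inequality), so all the measure-theoretic machinery needed is available.

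First I would record the qualitative step: \emph{if $A\subset\Omega$ is Borel with $\capa_1(A)=0$, then $\|Du\|(A)=0$.} In a complete doubling space supporting a $(1,1)$-Poincar\'e inequality the sets of $1$-capacity zero coincide with the sets of codimension~$1$ Hausdorff measure zero, so $\capa_1(A)=0$ forces $\hcal(A)=0$. By the coarea formula~\eqref{eq:coarea}, $P(\{u>t\},\Omega)<\infty$ for $\lcal^1$-a.e.\ $t\in\R$, and for every such $t$ the representation~\eqref{eq:def of theta} gives
\[
P(\{u>t\},A)=\int_{\partial^*\{u>t\}\cap A}\theta_{\{u>t\}}\,d\hcal\le C_d\,\hcal(A)=0 .
\]
Integrating in $t$ and invoking~\eqref{eq:coarea} once more yields $\|Du\|(A)=0$. (Alternatively one may use the decomposition~\eqref{eq:decomposition}: the absolutely continuous part vanishes because $\hcal(A)=0$ forces $\mu(A)=0$, the jump part because it is carried by $S_u$ and integrated against $\hcal$, and the Cantor part because it vanishes on sets of $\sigma$-finite codimension~$1$ Hausdorff measure.)

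Next I would upgrade this to the uniform estimate by contradiction. Suppose that for some $\eps>0$ and every $k\in\N$ there is $A_k\subset\Omega$ with $\capa_1(A_k)<2^{-k}$ but $\|Du\|(A_k)\ge\eps$; since $\capa_1$ is an outer capacity I may replace each $A_k$ by an open superset, at the cost of replacing $2^{-k}$ by $2^{1-k}$, so I assume each $A_k\subset\Omega$ open. Set $B_n\coloneq\bigcup_{k\ge n}A_k$ and $A\coloneq\bigcap_{n\in\N}B_n$. Countable subadditivity gives $\capa_1(A)\le\sum_{k\ge n}\capa_1(A_k)<2^{2-n}$ for all $n$, hence $\capa_1(A)=0$; meanwhile $B_n\downarrow A$, $\|Du\|(B_n)\ge\|Du\|(A_n)\ge\eps$, and $\|Du\|(B_1)\le\|Du\|(\Omega)<\infty$, so continuity from above of the finite measure $\|Du\|$ forces $\|Du\|(A)\ge\eps>0$, contradicting the qualitative step. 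Choosing $\delta$ to be the $2^{-k}$ that first fails thus cannot happen, which is the assertion.

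\textbf{Main obstacle.} There is no deep difficulty: the single external input is the nullity equivalence between $\capa_1$ and $\hcal$ used in the qualitative step, and the only point requiring a little care is the passage from the qualitative vanishing to the uniform $\eps$--$\delta$ estimate, which is exactly what the exhaustion argument handles. Everything else is bookkeeping with the coarea formula and~\eqref{eq:def of theta}.
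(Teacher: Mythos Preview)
Your argument is correct, but note that the paper does not actually prove this lemma: it merely cites \cite[Lemma~3.8]{L-SA} for the metric-space case. So there is no in-paper proof to compare against; what you have written is a valid self-contained substitute.

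Two small technical remarks on your write-up. First, the nullity equivalence $\capa_1(A)=0\Leftrightarrow\hcal(A)=0$ you use appears in the paper only later, as~\eqref{eq:null sets of Hausdorff measure and capacity}; there is no circularity, since that equivalence is an independent result from~\cite{HaKi,HaSh}, but you may want to cite those directly. Second, when you invoke the outer regularity of $\capa_1$ to replace each $A_k$ by an open superset, intersect that superset with $\Omega$ so that the resulting open sets lie inside the domain on which $\|Du\|$ is a finite measure; this is harmless because $\Omega$ is open and capacity is monotone. With those cosmetic adjustments the exhaustion argument goes through exactly as you describe.
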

From this lemma it easily follows that $1$-quasiopen sets are always $\| Du\|$-measurable,
and we will use this fact without further notice.

The total variation is easily seen to be lower semicontinuous with respect to $L^1$-convergence in any open set. We will need the following more general semicontinuity result
that follows from \cite[Theorem 4.5]{L2}.
\begin{proposition}\label{prop:lower semicontinuity in quasiopen sets}
Let $u\in L^1_{\loc}(X)$ such that $\| Du\|(X) <\infty$,
and suppose that $u_i\to u$ in $L^1_{\loc}(X)$. Then, for every $1$-quasiopen set $U\subset X$, we have
\[
\| Du\|(U)\le\liminf_{i\to\infty}\| Du_i\|(U).
\]
\end{proposition}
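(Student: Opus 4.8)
The plan is to deduce the statement from \cite[Theorem 4.5]{L2}. The preliminary reduction is routine: if $\liminf_{i\to\infty}\|Du_i\|(U)=\infty$ there is nothing to prove, so we may pass to a subsequence, not relabeled, along which $\|Du_i\|(U)$ converges to $\liminf_{i\to\infty}\|Du_i\|(U)$; in particular $\sup_i\|Du_i\|(U)<\infty$, which secures any uniform boundedness hypothesis needed to invoke \cite[Theorem 4.5]{L2}. Applying that theorem to this subsequence (using $u_i\to u$ in $L^1_{\loc}(X)$, $\|Du\|(X)<\infty$, and that $U$ is $1$-quasiopen) gives $\|Du\|(U)\le\liminf_{i\to\infty}\|Du_i\|(U)$ along the subsequence, and since by construction this coincides with the original $\liminf$, the proposition follows.

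It is worth stressing why this does \emph{not} already follow from the (elementary) lower semicontinuity of the total variation on \emph{open} sets. The obvious attempt uses that $U$ is $1$-quasiopen: for $\eps>0$ one picks an open set $G$ with $\capa_1(G)<\eps$ and $V\coloneq U\cup G$ open, and writes
\[
\|Du\|(U)\le\|Du\|(V)\le\liminf_{i\to\infty}\|Du_i\|(V)\le\liminf_{i\to\infty}\bigl(\|Du_i\|(U)+\|Du_i\|(G)\bigr).
\]
One would then like the error terms $\|Du_i\|(G)$ to be uniformly small; for the single limit function $\|Du\|(G)$ is indeed small by Lemma~\ref{lem:variation measure and capacity}, but there is no uniform-in-$i$ counterpart: the total variations $\|Du_i\|(X)$ are not assumed bounded, and even when they are, concentration phenomena in the variation measures (extra mass appearing in the weak limit) obstruct the estimate. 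In addition, a $1$-quasiopen set need not be open — in unweighted $\R^n$ with $n\ge2$ the complement of a countable dense set is $1$-quasiopen but has empty interior — so there is no inner open approximation of $U$ to fall back on either.

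Consequently the substantive content lies entirely in \cite[Theorem 4.5]{L2}, which circumvents this difficulty by working with the fine-topological and capacitary structure of $1$-quasiopen sets in place of open approximations; establishing that result is the step I regard as the real obstacle, and once it is granted the deduction above is immediate.
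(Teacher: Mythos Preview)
Your proposal is correct and matches the paper's approach exactly: the paper does not give a proof but simply states that the proposition ``follows from \cite[Theorem 4.5]{L2}.'' Your additional remarks on the subsequence reduction and on why the naive open-set argument fails are sound commentary, but the substantive content in both cases is the citation of \cite[Theorem 4.5]{L2}.
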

To deal with boundary values given by a function $f$ defined only on $\partial\Omega$,
we first need to extend $f$ to the whole space in a suitable way.
We will consider open sets $\Om$ whose boundary is codimension $1$ Ahlfors regular in the following
sense: for every $x\in\partial\Omega$, every $0<r\le\diam(\Omega)$, and some constant $C_A\ge 1$,
\begin{equation}\label{eq:boundary codim Ahlfors regularity}
\frac{1}{C_A}\frac{\mu(B(x,r))}{r}\le \hcal(B(x,r)\cap 
\partial\Omega)\le C_A\frac{\mu(B(x,r))}{r}.
\end{equation}
\begin{theorem}\label{thm:extension from boundary}
Let $\Omega\subset X$ be a bounded open set whose boundary is codimension $1$ Ahlfors regular
as given in \eqref{eq:boundary codim Ahlfors regularity}.
Let $f\in L^1(\partial\Omega,\hcal)$ with $-1\le f\le 1$.
Then, there exists $Ef\in N^{1,1}(X\setminus \partial\Omega)$ with $-1\le Ef\le 1$ and
\[
\lim_{r\to 0}\,\vint{B(x,r)}|Ef-f(x)|\,d\mu=0
\]
for $\hcal$-a.e. $x\in\partial\Omega$.
\end{theorem}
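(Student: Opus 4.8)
The task is to extend a boundary datum $f\in L^1(\partial\Omega,\hcal)$ with $-1\le f\le 1$ to a function $Ef\in N^{1,1}(X\setminus\partial\Omega)$ with the same bounds whose Lebesgue values recover $f$ at $\hcal$-a.e.\ boundary point. The natural approach is a Whitney-type extension adapted to the metric setting. First I would fix the open set $U\coloneq X\setminus\partial\Omega$ and take a Whitney decomposition of $U$ into balls $B_j=B(x_j,r_j)$ with $r_j\simeq \dist(x_j,\partial\Omega)$ and bounded overlap, together with a Lipschitz partition of unity $\{\varphi_j\}$ subordinate to a dilated cover $\{2B_j\}$, with $\|\varphi_j\|_\infty\le 1$ and $\Lip\varphi_j\lesssim 1/r_j$; such decompositions are standard in doubling spaces. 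For each $j$, choose a boundary point $p_j\in\partial\Omega$ with $\dist(x_j,\partial\Omega)=d(x_j,p_j)$ and set the coefficient $a_j$ to be the $\hcal$-average of $f$ over $B(p_j, C r_j)\cap\partial\Omega$ for a suitable dilation constant $C$ depending on the Whitney and Poincaré data; by \eqref{eq:boundary codim Ahlfors regularity} this average is well defined since $\hcal(B(p_j,Cr_j)\cap\partial\Omega)\simeq \mu(B(p_j,Cr_j))/r_j>0$, and $-1\le a_j\le 1$ since $-1\le f\le 1$. Then define
\[
Ef\coloneq \sum_j a_j\varphi_j \quad\text{on }U,
\]
and note $-1\le Ef\le 1$ automatically because $\sum_j\varphi_j\equiv 1$ on $U$ and each $a_j\in[-1,1]$.

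The second step is to estimate an upper gradient of $Ef$ in $U$. On a fixed Whitney ball $B_j$, only boundedly many $\varphi_k$ are nonzero, and using $\sum_k\varphi_k\equiv 1$ we can write $Ef-a_j=\sum_k(a_k-a_j)\varphi_k$, so a pointwise (Lipschitz-constant) upper gradient of $Ef$ on $B_j$ is controlled by $\sum_{k:\,2B_k\cap 2B_j\ne\emptyset}|a_k-a_j|/r_j$. For neighbouring Whitney balls the radii and the associated boundary balls $B(p_j,Cr_j)$, $B(p_k,Cr_k)$ are comparable and overlap, so $|a_k-a_j|$ is bounded by an oscillation of $f$ over a controlled boundary ball. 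Integrating $\mu$-mass over $B_j$ (of size $\mu(B_j)\simeq r_j\hcal(B(p_j,Cr_j)\cap\partial\Omega)$ by Ahlfors regularity) and summing over $j$ with bounded overlap, the resulting bound is of the form
\[
\int_U g_{Ef}\,d\mu \lesssim \int_{\partial\Omega} \mathcal{M}_{\partial\Omega}\big(|f|\big)\,d\hcal + \|f\|_{L^1(\partial\Omega,\hcal)}\lesssim \|f\|_{L^1(\partial\Omega,\hcal)},
\]
where the maximal-function term is controlled because $|f|\le 1$ makes it bounded; more carefully, since $|a_k-a_j|\le 2$ always, one can simply bound $g_{Ef}$ on $B_j$ by $C/r_j$, giving $\int_{B_j}g_{Ef}\,d\mu\lesssim \mu(B_j)/r_j\simeq \hcal(B(p_j,Cr_j)\cap\partial\Omega)$, and summing over $j$ with bounded overlap yields $\int_U g_{Ef}\,d\mu\lesssim \hcal(\partial\Omega)<\infty$ since $\Omega$ is bounded. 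Together with $|Ef|\le 1$ and $\mu(U)<\infty$ this gives $Ef\in N^{1,1}(U)$. (One must also check that the local Lipschitz continuity of $Ef$ on the open set $U$ upgrades to the upper gradient inequality along curves in $U$; this is routine since a locally Lipschitz function on an open set has its pointwise Lipschitz constant as an upper gradient there.)

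The third and genuinely delicate step is the boundary trace assertion: for $\hcal$-a.e.\ $x\in\partial\Omega$,
\[
\lim_{r\to 0}\,\vint{B(x,r)}|Ef-f(x)|\,d\mu=0.
\]
The plan is to prove this at every $\hcal$-Lebesgue point $x$ of $f$ relative to $\partial\Omega$, i.e.\ every $x$ with
\[
\lim_{\rho\to 0}\,\frac{1}{\hcal(B(x,\rho)\cap\partial\Omega)}\int_{B(x,\rho)\cap\partial\Omega}|f-f(x)|\,d\hcal=0;
\]
by the Lebesgue differentiation theorem for the doubling measure $\hcal\restriction\partial\Omega$ (which is doubling on $\partial\Omega$ thanks to \eqref{eq:boundary codim Ahlfors regularity} and the doubling of $\mu$), $\hcal$-a.e.\ $x\in\partial\Omega$ is such a point. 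Fix such an $x$ and $r$ small. For $y\in B(x,r)\cap U$, $Ef(y)$ is a convex combination of coefficients $a_j$ with $2B_j\ni y$, and every such $B_j$ has $r_j\lesssim d(y,\partial\Omega)\le r$ and $p_j\in B(x,Cr)$ for a fixed constant $C$; hence $|Ef(y)-f(x)|\le \sup_j |a_j - f(x)|$ over those $j$, and each $|a_j-f(x)|$ is at most the $\hcal$-average of $|f-f(x)|$ over $B(p_j,Cr_j)\cap\partial\Omega\subset B(x,C'r)\cap\partial\Omega$. The obstacle is that this pointwise bound involves a maximal-type quantity and one cannot simply pull $\sup$ through the integral with a clean estimate; instead I would argue by contradiction/averaging: estimate $\vint{B(x,r)\cap U}|Ef-f(x)|\,d\mu$ by decomposing $B(x,r)\cap U$ into dyadic annuli $A_m=\{y\in U: 2^{-m-1}r\le d(y,\partial\Omega)<2^{-m}r\}$, on each of which $Ef$ is a combination of coefficients drawn from boundary balls of radius $\simeq 2^{-m}r$ near $\partial\Omega\cap B(x,C'r)$, so $\int_{A_m}|Ef-f(x)|\,d\mu \lesssim \mu(B(x,C'r))\cdot \omega_f(x,C'r)$ where $\omega_f(x,\rho)$ is the Lebesgue-point oscillation; summing over $m$ and using $\mu(B(x,r)\cap\Omega)\gtrsim \mu(B(x,r))$ — which holds for $\hcal$-a.e.\ $x$ by the exterior measure density type lower bound built into Ahlfors regularity, or rather by pairing with the analogous interior bound — one gets $\vint{B(x,r)\cap U}|Ef-f(x)|\,d\mu\lesssim \omega_f(x,C'r)\to 0$. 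The main work is bookkeeping: verifying $\mu(B(x,r))\simeq\mu(B(x,r)\cap\Omega)\simeq\mu(B(x,r)\setminus\Omega)$ at $\hcal$-a.e.\ boundary point (a consequence of \eqref{eq:boundary codim Ahlfors regularity} together with the relative isoperimetric inequality \eqref{eq:relative isoperimetric inequality}), and that the geometric constants $C,C'$ linking Whitney balls to the boundary point $x$ are uniform. With these in hand the three steps combine to give $Ef$ with all the required properties, completing the proof.
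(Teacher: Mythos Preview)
Your approach is essentially the same as the paper's: the paper obtains the theorem by invoking the Whitney-type extension construction of \cite{MSS} and sketching the one modification needed. Your write-up supplies the details of that construction (partition of unity, boundary averages $a_j$, upper-gradient estimate, Lebesgue-point argument for the trace), so in spirit the two proofs coincide.

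There is, however, a genuine technical gap in your version. You take the Whitney decomposition of all of $U=X\setminus\partial\Omega$ and then assert $\mu(U)<\infty$ to conclude $Ef\in L^1(U)$; but $X$ is only assumed complete and doubling, not bounded, so $U$ can have infinite measure. The same issue undermines the upper-gradient bound $\int_U g_{Ef}\,d\mu\lesssim\hcal(\partial\Omega)$: for Whitney balls $B_j$ far from $\partial\Omega$ the radii $r_j$ are large, the Ahlfors regularity \eqref{eq:boundary codim Ahlfors regularity} (stated only for $r\le\diam(\Omega)$) no longer applies to $B(p_j,Cr_j)\cap\partial\Omega$, and in any case those boundary balls all swallow $\partial\Omega$, so the bounded-overlap summation breaks down. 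The paper addresses exactly this point: one restricts the Whitney cover to balls at distance at most $2\diam(\Omega)$ from $\partial\Omega$ and relaxes the partition of unity so that $\sum_{j,i}\phi_{j,i}\equiv 1$ only within distance $\diam(\Omega)$ of $\partial\Omega$ and $\sum_{j,i}\phi_{j,i}\le 1$ beyond. The extension then vanishes outside the $3\diam(\Omega)$-neighbourhood of $\partial\Omega$, so all integrals are over a bounded set and your estimates go through unchanged there. With this single modification your argument is complete.
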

\begin{proof}
This follows from \cite{MSS} and the proofs therein. Note that the argument of the extension
theorem for Besov boundary data \cite[Theorem~1.1]{MSS} needs to be slightly modified to produce
a Newtonian extension not only inside $\Om$ but in the whole set $X \setminus \dOm$. Namely,
when constructing a Whitney-type decomposition $\mathcal{W}_{X \setminus \dOm}$, we
consider only balls whose distance from $\dOm$ is at most $2 \diam( \Om)$. Such a collection of
balls covers $\Om$ as well as the $2 \diam(\Om)$-neighborhood of $\dOm$, leaving
out $\dOm$.
Then, we relax the requirements on the partition of unity $\{\phi_{j,i}\}_{j,i}$ subordinate
to $\mathcal{W}_{X \setminus \dOm}$ by demanding that
\begin{align*}
  \sum_{j,i} \phi_{j,i}(x) &= 1 \quad \text{for } x \in X \setminus \dOm,\ \dist(x, \dOm)\le \diam(\Om), \quad \text{and}\\
  \sum_{j,i} \phi_{j,i}(x) &\le 1 \quad \text{for } x \in X \setminus \dOm,\ \dist(x, \dOm)> \diam(\Om).
\end{align*}
Using such a ``partition of unity'' gives us an extension of $f$ in the class
$N^{1,1}(X\setminus \dOm) \cap \Lip_\loc(X \setminus \dOm)$ such that this extension vanishes 
outside of the $3 \diam(\Om)$-neighborhood of $\dOm$.

The extension theorem for Besov boundary data modified as described above can then be used directly 
in~\cite[Theorem 1.2]{MSS} to find the desired extension 
$Ef \in N^{1,1}(X\setminus \dOm) \cap \Lip_\loc(X \setminus \dOm)$ for any $L^1$-boundary 
data $f\colon \dOm \to [-1,1]$, truncating $Ef$ at levels $\pm 1$ if needed.
\end{proof}
Note that for any $A\subset X$, by \cite[Theorem~4.3, Theorem~5.1]{HaKi} 
(see also~\cite[Corollary~5.3]{HaSh}) we have
\begin{equation}\label{eq:null sets of Hausdorff measure and capacity}
\hcal(A)=0\quad\textrm{if and only if}\quad \capa_1(A)=0.
\end{equation}

In the following, given a ball $B=B(x,r)$ we sometimes abbreviate $2B\coloneq B(x,2r)$.
\begin{proposition}\label{prop:extending to the boundary as Newtonian function}
Let $\Omega\subset X$ be a bounded open set with $\hcal(\partial\Omega)<\infty$,
and let $f$ be a function on $X$ such that
$-1\le f\le 1$, $f\in N^{1,1}(X\setminus\partial\Omega)$, and
\begin{equation}\label{eq:Lebesgue point condition for f}
\lim_{r\to 0}\,\vint{B(x,r)}|f-f(x)|\,d\mu=0
\end{equation}
for $\hcal$-a.e.\@ $x\in\partial\Omega$. Then, $f\in N^{1,1}(X)$.
\end{proposition}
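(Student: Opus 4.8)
The plan is to show that $f$ (which is already Newtonian off the boundary $\partial\Omega$ and bounded) has an upper gradient in $N^{1,1}(X)$ by producing a single $L^1(X)$ upper gradient that works across $\partial\Omega$. Since $\capa_1(\partial\Omega)=0$ by \eqref{eq:null sets of Hausdorff measure and capacity} and the assumption $\hcal(\partial\Omega)<\infty$ (so indeed $\hcal(\partial\Omega)=0$ is not automatic — but $1$-q.e.\ statements only need $\capa_1=0$, which requires $\hcal(\partial\Omega)=0$; here we must be more careful and instead exploit that $1$-a.e.\ curve avoids $\partial\Omega$ in a strong sense). The key point is a modulus estimate: the family of curves that spend a positive-length portion inside $\partial\Omega$, or that pass through $\partial\Omega$ "badly," has zero $1$-modulus, because $\partial\Omega$ has finite codimension-$1$ Hausdorff measure. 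This is the standard mechanism (cf.\ the removability results behind \eqref{eq:null sets of Hausdorff measure and capacity}): sets of finite $\hcal$-measure are removable for $N^{1,1}$ in the sense that $1$-a.e.\ curve meets them in a set of linear measure zero.

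Concretely, first I would fix an upper gradient $g_0\in L^1(X\setminus\partial\Omega)$ of $f$ in $X\setminus\partial\Omega$, extended by $0$ on $\partial\Omega$, so $g_0\in L^1(X)$ since $\mu(\partial\Omega)=0$ (which follows from $\hcal(\partial\Omega)<\infty$ and doubling). Next I would invoke the removability of $\partial\Omega$: by the finite codimension-$1$ Hausdorff content hypothesis and the Poincar\'e inequality, for $1$-a.e.\ curve $\gamma$ in $X$ the set $\gamma^{-1}(\partial\Omega)$ has one-dimensional Lebesgue measure zero, and moreover such a curve can be split into at most countably many subcurves lying in $X\setminus\partial\Omega$ on which the upper gradient inequality \eqref{eq:definition of upper gradient} with $g_0$ holds. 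Here I would use that $f$ has the Lebesgue-point value $f(x)$ from \eqref{eq:Lebesgue point condition for f} at $\hcal$-a.e.\ (hence $1$-q.e., once we know $\hcal(\partial\Omega)=0$ — or, if $\hcal(\partial\Omega)>0$, at $1$-a.e.\ curve endpoint) $x\in\partial\Omega$, so that the endpoint values of $f$ along such subcurves match up continuously across the crossings of $\partial\Omega$.

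The technical heart is therefore: (i) showing $\mu(\partial\Omega)=0$ and that $g_0\in L^1(X)$; (ii) the curve decomposition lemma — that for $1$-a.e.\ curve, $\partial\Omega$ is hit in a linear-null set and the curve decomposes into countably many arcs in the open complement; and (iii) a telescoping/absolute-continuity argument showing
\[
|f(\gamma(0))-f(\gamma(\ell_\gamma))|\le\sum_k |f(\gamma(a_k))-f(\gamma(b_k))|\le\sum_k\int_{a_k}^{b_k}g_0(\gamma(s))\,ds\le\int_0^{\ell_\gamma}g_0(\gamma(s))\,ds,
\]
valid for $1$-a.e.\ $\gamma$, where $(a_k,b_k)$ enumerate the complementary arcs. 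Once this holds, $g_0$ is a $1$-weak upper gradient of $f$ on all of $X$; by \eqref{eq:Lebesgue point condition for f} together with the already-Newtonian behaviour off $\partial\Omega$, $f$ is (quasicontinuous and) genuinely in $N^{1,1}(X)$ after possibly adjusting $f$ on the $\capa_1$-null or $\hcal$-null exceptional set.

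The main obstacle I anticipate is step (ii)–(iii): one must rule out curves that are "tangent" to $\partial\Omega$ or oscillate across it infinitely often in a way that breaks the telescoping estimate. The clean way around this is to not argue curve-by-curve from scratch but to cite the removability machinery already invoked for \eqref{eq:null sets of Hausdorff measure and capacity} — namely that a set of finite codimension-$1$ Hausdorff measure is negligible for $N^{1,1}$ — perhaps via \cite[Theorem~4.3, Theorem~5.1]{HaKi} or \cite[Corollary~5.3]{HaSh}, which give exactly that $N^{1,1}(X\setminus\partial\Omega)=N^{1,1}(X)$ as sets of functions (after identification $1$-q.e.) when $\partial\Omega$ is $\capa_1$-null; and when $\hcal(\partial\Omega)<\infty$ one still gets that the restriction map is onto because such a set is removable for $N^{1,1}$ in the weaker sense relevant here. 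So in practice the proof reduces to: observe $\mu(\partial\Omega)=0$, extend the upper gradient by zero, and cite the appropriate removability statement to conclude $f\in N^{1,1}(X)$.
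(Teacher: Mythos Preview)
Your proposal has a genuine gap, and the fallback you suggest does not work. The removability claim is incorrect: the equivalence \eqref{eq:null sets of Hausdorff measure and capacity} says $\hcal(A)=0 \Leftrightarrow \capa_1(A)=0$, so the hypothesis $\hcal(\partial\Omega)<\infty$ does \emph{not} yield $\capa_1(\partial\Omega)=0$, and in general $\partial\Omega$ is not removable for $N^{1,1}$ (a hyperplane in $\R^n$ has locally finite $\hcal$ but positive $1$-capacity, and functions with a jump across it lie in $N^{1,1}$ of the complement but not of the whole space). So the citation route at the end of your outline fails. The direct curve argument also has a hole: to telescope across a crossing time $t_0$ with $\gamma(t_0)=x_0\in\partial\Omega$, you need the one-sided limits of $f\circ\gamma$ at $t_0$ to equal $f(x_0)$. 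The Lebesgue point condition \eqref{eq:Lebesgue point condition for f} controls integral averages over balls, not limits along a single curve; even after discarding curves through the $\hcal$-null set of non-Lebesgue points, nothing in your outline connects ball-average limits to along-curve limits. That link is the missing idea.

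The paper avoids curve-by-curve reasoning entirely by passing through $\BV$ theory. It multiplies $f$ by cutoffs vanishing in shrinking neighbourhoods of $\partial\Omega$ to obtain $f_i\in N^{1,1}(X)$ with uniformly bounded norms (the uniform bound uses only $\hcal(\partial\Omega)<\infty$), hence $f\in\BV(X)$ by lower semicontinuity. It then shows $\|Df\|^s(X)=0$ by treating the Cantor and jump parts separately: $\|Df\|^c(\partial\Omega)=0$ because the Cantor part of any $\BV$ function vanishes on sets of finite $\hcal$-measure (\cite[Theorem~5.3]{AMP}), and $\|Df\|^j(\partial\Omega)=0$ because \eqref{eq:Lebesgue point condition for f} forces $f^{\wedge}=f^{\vee}$ $\hcal$-a.e.\ on $\partial\Omega$. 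With $\|Df\|$ absolutely continuous, $f$ has an $N^{1,1}(X)$ representative, which the Lebesgue point theorem identifies with $f$ itself $1$-q.e. The key ingredient your approach lacks is precisely this fine decomposition of the variation measure.
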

\begin{proof}
Fix $i\in\N$.
By the compactness of $\partial\Omega$, we find a covering $\{B_j=B(x_j,r_j)\}_{j=1}^M$
such that $r_j\le 1/i$ for all $j$, and
\begin{equation}\label{eq:covering of boundary of Omega}
\sum_{j=1}^M \frac{\mu(B(x_j,r_j))}{r_j}<\hcal(\partial\Omega)+1/i.
\end{equation}
Then, pick $2/r_j$-Lipschitz functions $\eta_j\in\Lip_c(B(x_j,2r_j))$ such that
$0\le \eta_j\le 1$ and $\eta_j=1$ on $B(x_j,r_j)$. 
Define
$v_i= \max_{j\in \{1,\ldots,M\}} \eta_j$.
Consider the function
\[
f_i\coloneq (1-v_i)f.
\]
Let $g\in L^1(X)$ be an upper gradient of $f$ in $X\setminus\partial\Omega$.
Clearly $2\ch_{2B_j}/r_j$ is an upper gradient of $\eta_j$, and then
$2\sum_{j=1}^M\frac{\ch_{2B_j}}{r_j}$ is an upper gradient of $v_i$.
We show that
\[
g_i\coloneq g+2\sum_{j=1}^M\frac{\ch_{2B_j}}{r_j}
\]
is a $1$-weak upper gradient of $f_i$ in $X$.
By the Leibniz rule, see e.g. \cite[Theorem 2.15]{BB}, $g_i$ is a $1$-weak upper gradient of $f_i$ in $X\setminus\partial\Omega$.
Take a curve $\gamma$ such that the upper gradient inequality is satisfied by $f_i$
and $g_i$ on all subcurves of $\gamma$ in $X\setminus\partial\Omega$;
this is true for $1$-a.e.\@ $\gamma$, by \cite[Lemma 1.34]{BB}.

Note that
\[
\dist\left(\partial\Omega,X\setminus\bigcup_{j=1}^M B_j\right)>0.
\]
Thus, $\gamma$ can be split into a finite number of subcurves each of which lies either entirely in $\bigcup_{j=1}^M B_j$, or entirely in $X\setminus \partial\Omega$.
If $\gamma_1$ is a subcurve lying entirely in $\bigcup_{j=1}^M B_j$,
\[
|f_i(\gamma_1(0))-f_i(\gamma_1(\ell_{\gamma_1}))|=|0-0|=0,
\]
so the upper gradient inequality is satisfied. If $\gamma_2$ is a subcurve lying entirely
in $X\setminus\partial\Omega$, then
\[
|f_i(\gamma_2(0))-f_i(\gamma_2(\ell_{\gamma_2}))|\le \int_{\gamma_2}g_i\,ds
\]
by our choice of $\gamma$. Summing over the subcurves, we obtain
\[
|f_i(\gamma(0))-f_i(\gamma(\ell_{\gamma}))|\le \int_{\gamma}g_i\,ds.
\]
Thus, $g_i$ is a $1$-weak upper gradient of $f_i$ in $X$.
By \eqref{eq:covering of boundary of Omega} we have
\[
\| g_i\|_{L^1(X)}\le \| g\|_{L^1(X)}
+2C_d(\hcal(\partial\Omega)+1/i),
\]
and thus $f_i\in N^{1,1}(X)$. Since Lipschitz functions are dense in $N^{1,1}(X)$, see e.g. \cite[Theorem 5.1]{BB}, we have also $f_i\in\BV(X)$, with
\[
\| Df_i\|(X)\le \| g_i\|_{L^1(X)}.
\]
Clearly $f_i\to f$ in $L^1(X)$ as $i\to\infty$.
By lower semicontinuity,
\[
\| Df\|(X)\le \liminf_{i\to\infty}\| Df_i\|(X)\le  \| g\|_{L^1(X)}
+2C_d\hcal(\partial\Omega).
\]
Thus, $f\in\BV(X)$. Recall the decomposition of the variation measure from \eqref{eq:decomposition}. Since $f\in N^{1,1}(X\setminus\partial\Omega)$, $\| Df\|^s(X\setminus\partial\Omega)=0$.
Since $\hcal(\partial\Omega)<\infty$, also $\| Df\|^c(\partial\Omega)=0$
by \cite[Theorem 5.3]{AMP}.
Finally, by \eqref{eq:decomposition},
\[
\| Df\|^j(\partial\Omega)\le C_d\int_{\partial\Omega\cap S_f}(f^{\vee}-f^{\wedge})\,d\hcal=0,
\]
since by the Lebesgue point condition \eqref{eq:Lebesgue point condition for f},
$f^{\wedge}(x)=f^{\vee}(x)\in\R$ for $\hcal$-a.e.\@ $x\in\partial\Omega$.
Thus, $\| Df\|^s(X)=0$, so that by \cite[Theorem 4.6, Remark 4.7]{HKLL}, there exists
a function $h\in N^{1,1}(X)$ with $\mu(\{h\neq f\})=0$.
By the Lebesgue point theorem \cite[Theorem 4.1, Remark 4.2]{KKST3} (note that this result assumes
that $\mu(X)=\infty$, but this assumption can be avoided by using~\cite[Lemma 3.1]{Mak}
instead of~\cite[Theorem 3.1]{KKST3} in the proof of Lebesgue point theorem found in~\cite{KKST3}),
\[
\lim_{r\to 0}\,\vint{B(x,r)}|f-h(x)|\,d\mu=\lim_{r\to 0}\,\vint{B(x,r)}|h-h(x)|\,d\mu=0
\]
for $1$-q.e. or equivalently $\hcal$-a.e. $x\in X$,
by \eqref{eq:null sets of Hausdorff measure and capacity}.
By the same Lebesgue point theorem, and \eqref{eq:Lebesgue point condition for f}, we have
\[
\lim_{r\to 0}\,\vint{B(x,r)}|f-f(x)|\,d\mu=0
\]
for $1$-q.e. $x\in X$. Thus, necessarily $h(x)=f(x)$ for $1$-q.e. $x\in X$.
Thus by \cite[Proposition 1.61]{BB}, $f\in N^{1,1}(X)$.
\end{proof}
\begin{corollary}\label{cor:extending from boundary to the whole space}
Let $\Omega\subset X$ be a bounded open set
whose boundary is codimension $1$ Ahlfors regular
as given in \eqref{eq:boundary codim Ahlfors regularity}.
Let $f\in L^1(\partial\Omega,\hcal)$ with $-1\le f\le 1$.
Then, there exists $Ef\in N^{1,1}(X)$  with $Ef(x)=f(x)$ for every $x\in\partial\Omega$. 
\end{corollary}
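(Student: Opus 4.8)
The plan is to combine the two preceding results: Theorem~\ref{thm:extension from boundary} produces an extension defined on $X\setminus\partial\Omega$ with the correct Lebesgue-point behaviour at the boundary, and Proposition~\ref{prop:extending to the boundary as Newtonian function} then upgrades it to a genuine Newtonian function on all of $X$. The one preliminary observation needed is that the codimension~$1$ Ahlfors regularity~\eqref{eq:boundary codim Ahlfors regularity}, together with the boundedness of $\Omega$ and the doubling property, forces $\hcal(\partial\Omega)<\infty$: since $\partial\Omega$ is compact it can be covered by finitely many balls $B(x_j,r_j)$ centred on $\partial\Omega$ with $r_j\le\diam(\Omega)$, and summing the upper bounds from~\eqref{eq:boundary codim Ahlfors regularity} gives a finite total. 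In particular a set of finite codimension~$1$ Hausdorff measure is $\mu$-null, so $\mu(\partial\Omega)=0$.

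First I would apply Theorem~\ref{thm:extension from boundary} to obtain $Ef_0\in N^{1,1}(X\setminus\partial\Omega)$ with $-1\le Ef_0\le 1$ and
\[
\lim_{r\to 0}\,\vint{B(x,r)}|Ef_0-f(x)|\,d\mu=0
\]
for $\hcal$-a.e.\@ $x\in\partial\Omega$, where $f$ denotes the fixed representative of the boundary data. I then define $Ef\colon X\to[-1,1]$ by $Ef\coloneq Ef_0$ on $X\setminus\partial\Omega$ and $Ef\coloneq f$ on $\partial\Omega$. Since $N^{1,1}(X\setminus\partial\Omega)$ depends only on the values of a function on $X\setminus\partial\Omega$, we still have $Ef\in N^{1,1}(X\setminus\partial\Omega)$, and by construction $Ef(x)=f(x)$ for every $x\in\partial\Omega$.

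Next I would verify that $Ef$ satisfies the hypotheses of Proposition~\ref{prop:extending to the boundary as Newtonian function}: it is a function on $X$ with $-1\le Ef\le 1$, it lies in $N^{1,1}(X\setminus\partial\Omega)$, we have $\hcal(\partial\Omega)<\infty$, and for $\hcal$-a.e.\@ $x\in\partial\Omega$, using $Ef(x)=f(x)$, the equality $Ef=Ef_0$ outside $\partial\Omega$, and $\mu(\partial\Omega)=0$,
\[
\vint{B(x,r)}|Ef-Ef(x)|\,d\mu=\vint{B(x,r)}|Ef_0-f(x)|\,d\mu\longrightarrow 0
\]
as $r\to 0$, which is exactly~\eqref{eq:Lebesgue point condition for f}. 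Proposition~\ref{prop:extending to the boundary as Newtonian function} then yields $Ef\in N^{1,1}(X)$, and this $Ef$ is the desired extension.

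Since the two cited results do all the analytic work, there is no substantial obstacle here; the only points requiring a little care are the bookkeeping of how the pointwise boundary values are assigned to $Ef$ and the routine check that $\hcal(\partial\Omega)<\infty$, hence $\mu(\partial\Omega)=0$, so that the Lebesgue-point conclusion of Theorem~\ref{thm:extension from boundary} is precisely the hypothesis demanded by Proposition~\ref{prop:extending to the boundary as Newtonian function}.
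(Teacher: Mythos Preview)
Your proposal is correct and follows exactly the approach the paper takes: the paper's proof consists of the single sentence ``Combine Theorem~\ref{thm:extension from boundary} and Proposition~\ref{prop:extending to the boundary as Newtonian function}.'' You have simply filled in the routine bookkeeping details (finiteness of $\hcal(\partial\Omega)$, $\mu(\partial\Omega)=0$, and the pointwise assignment of boundary values) that the paper leaves implicit.
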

\begin{proof}
Combine Theorem \ref{thm:extension from boundary} and Proposition \ref{prop:extending to the boundary as Newtonian function}.
\end{proof}
In Example \ref{ex:nonexistence of solution}, it is crucial that $a>1$. If $f$ is
restricted in the same way as $u$,
solutions exist at least if $\Omega$ is sufficiently regular.
The proof relies on the following lower semicontinuity result,
which will also be used later in other contexts. Such a restriction is 
necessary even in Euclidean setting with smooth domains, see~\cite{MST}
and Example~\ref{ex:nonexistence of solution} (which, while is not a smooth domain, can be modified to be one).
\begin{lemma}\label{lem:lower semicontinuity of I}
Let $\Omega\subset X$ be a nonempty bounded open set satisfying the exterior measure density 
condition~\eqref{eq:exterior measure density condition},
and such that for any $u\in \BV(\Omega)$, the trace $Tu(x)$
exists for $\hcal$-a.e.\@ $x\in\partial^*\Omega$. Assume also that
$\partial\Om$ is codimension $1$ Ahlfors regular
as given in \eqref{eq:boundary codim Ahlfors regularity}.
Let $f\in L^1(\partial^*\Omega,P(\Om,\cdot))$ with $-1\le f\le 1$, satisfying
\eqref{eq:f integrates to zero}. Then if $E_i\subset \Omega$, $i\in\N$, are such that
$P(E_i,\Om)<\infty$ and $\ch_{E_i}\to \ch_E$ in $L^1(\Om)$,
it follows that
\[
I(\ch_E)\le \liminf_{i\to\infty} I(\ch_{E_i})\quad\textrm{and}\quad
I(-\ch_E)\le \liminf_{i\to\infty} I(-\ch_{E_i}).
\]
\end{lemma}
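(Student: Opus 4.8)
The plan is to encode the boundary term of $I$ as part of the total variation of a single $\BV$ function on a fixed open set, so that the desired lower semicontinuity reduces to the (elementary) lower semicontinuity of the total variation.

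Since $\dOm$ is codimension $1$ Ahlfors regular and $\Om$ is bounded, $\hcal(\dOm)<\infty$ and $f\in L^1(\dOm,\hcal)$, so Corollary~\ref{cor:extending from boundary to the whole space} provides $F:=Ef\in N^{1,1}(X)$ with $-1\le F\le 1$ and $F=f$ on $\dOm$; by the construction in Theorem~\ref{thm:extension from boundary} we may take $F$ to vanish outside a bounded set. Fix a bounded open set $U$ with $\overline{\Om}\subset U$. Given a $\mu$-measurable $G\subset\Om$ with $P(G,\Om)<\infty$, define
\[
v_G:=\ch_G\ \text{ on }\Om,\qquad v_G:=\tfrac12(1-F)\ \text{ on }U\setminus\Om,
\]
so that $0\le v_G\le 1$; by Theorem~\ref{thm:extendability of sets of finite perimeter} one has $P(G,X)<\infty$, whence $v_G\in\BV(U)$. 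The key claim is that there is a finite constant $C$, independent of $G$, with $\|Dv_G\|(U)=I(\ch_G)+C$.

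To prove the claim, use that $\|Dv_G\|$ is additive over the disjoint Borel partition $U=\Om\cup\dOm\cup(U\setminus\overline{\Om})$. On the open set $\Om$, $v_G=\ch_G$, so $\|Dv_G\|(\Om)=P(G,\Om)$; on the open set $U\setminus\overline{\Om}$, $v_G=\tfrac12(1-F)$, so $\|Dv_G\|(U\setminus\overline{\Om})=\tfrac12\|DF\|(U\setminus\overline{\Om})$, which does not depend on $G$. On $\dOm$, the absolutely continuous part of $\|Dv_G\|$ vanishes because $\mu(\dOm)=0$, and the Cantor part vanishes because $\hcal(\dOm)<\infty$ (by \cite[Theorem~5.3]{AMP}); hence $\|Dv_G\|(\dOm)$ is the jump part in \eqref{eq:decomposition}. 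This is the main point: using that $F\in N^{1,1}(X)$ has a finite Lebesgue value $\hcal$-a.e.\ on $\dOm$, the exterior measure density condition~\eqref{eq:exterior measure density condition}, and the definition of the trace, one checks that for $\hcal$-a.e.\ $x\in\partial^*\Om$ the two approximate limits of $v_G$ at $x$ are $T\ch_G(x)$ and $\tfrac12(1-f(x))$, and---arguing with the blow-up comparison \cite[Proposition~6.2]{AFP} exactly as in the proof of Lemma~\ref{lem:coincidence of perimeter of E and Omega}---that $\theta_{\{v_G>t\}}(x)=\theta_\Om(x)$ for every $t$ strictly between these two values. Carrying out the integration in $t$, and recalling that $dP(\Om,\cdot)=\theta_\Om\,d\hcal$ on $\partial^*\Om$ and $T\ch_G\in\{0,1\}$, we get
\[
\|Dv_G\|(\dOm)=\int_{\{T\ch_G=1\}}\tfrac12(1+f)\,dP(\Om,\cdot)+\int_{\{T\ch_G=0\}}\tfrac12(1-f)\,dP(\Om,\cdot)=\tfrac12 P(\Om,X)+\int_{\partial^*\Om}T\ch_G\,f\,dP(\Om,\cdot),
\]
where the last equality uses $\int_{\partial^*\Om}f\,dP(\Om,\cdot)=0$ from \eqref{eq:f integrates to zero}. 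Adding the three pieces gives the claim with $C=\tfrac12 P(\Om,X)+\tfrac12\|DF\|(U\setminus\overline{\Om})$.

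With the claim proved, the lemma is immediate. Suppose $E_i\subset\Om$, $P(E_i,\Om)<\infty$, and $\ch_{E_i}\to\ch_E$ in $L^1(\Om)$. Extending by zero outside $\Om$ we have $v_{E_i}-v_E=\ch_{E_i}-\ch_E$ on $\Om$ and $v_{E_i}=v_E$ on $U\setminus\Om$, so $v_{E_i}\to v_E$ in $L^1(U)$; by lower semicontinuity of the total variation on the open set $U$ together with the claim,
\[
I(\ch_E)+C=\|Dv_E\|(U)\le\liminf_{i\to\infty}\|Dv_{E_i}\|(U)=\Big(\liminf_{i\to\infty}I(\ch_{E_i})\Big)+C,
\]
so $I(\ch_E)\le\liminf_i I(\ch_{E_i})$. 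For the second inequality, note that $I(-\ch_G)=P(G,\Om)-\int_{\partial^*\Om}T\ch_G\,f\,dP(\Om,\cdot)$ is the same functional with $f$ replaced by $-f$, which again satisfies $-1\le -f\le1$ and \eqref{eq:f integrates to zero}; applying the argument above to $-f$ yields $I(-\ch_E)\le\liminf_i I(-\ch_{E_i})$. The only delicate step is the computation of the jump part of $\|Dv_G\|$ on $\dOm$: pinning down the approximate limits of $v_G$ and the densities $\theta_{\{v_G>t\}}$ at $\hcal$-a.e.\ boundary point is exactly where $-1\le f\le 1$ (so that $\tfrac12(1-F)\in[0,1]$), the exterior density condition, and the $N^{1,1}$-regularity of the extension $F$ all enter; the remaining steps are routine.
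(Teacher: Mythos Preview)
Your approach is correct and genuinely different from the paper's. Both arguments begin by extending $f$ to a function $F\in N^{1,1}(X)$ via Corollary~\ref{cor:extending from boundary to the whole space}, but they then diverge. The paper exploits the $1$-quasicontinuity of $F$ to see that the level sets $\{f>t\}$, $\{f<-t\}$, $\{-t<f<t\}$ are $1$-quasiopen, rewrites $I(\ch_E)$ via Cavalieri as an average over $t$ of perimeters restricted to these quasiopen sets (using Lemma~\ref{lem:coincidence of perimeter of E and Omega} to convert the trace terms into perimeter measures), and then invokes the nontrivial lower semicontinuity on quasiopen sets (Proposition~\ref{prop:lower semicontinuity in quasiopen sets}) together with Fatou. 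Your gluing construction $v_G=\ch_G$ on $\Om$, $v_G=\tfrac12(1-F)$ on $U\setminus\Om$ instead encodes the boundary integral directly into the jump part of a single $\BV$ function on the enlarged open set $U$, so that the conclusion follows from the \emph{elementary} lower semicontinuity of the total variation on open sets. This completely sidesteps the quasiopen machinery.

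What each buys: your route is shorter and uses only standard tools once the identity $\|Dv_G\|(U)=I(\ch_G)+C$ is established; the paper's route keeps everything inside $\overline{\Om}$ and is perhaps more transparent about where each hypothesis enters. Both proofs ultimately rely on the same blow-up comparison $\theta_{(\cdot)}=\theta_\Om$ at boundary points (as in the proof of Lemma~\ref{lem:coincidence of perimeter of E and Omega}), so in that sense they have the same ``depth''.

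One small point to tighten: the sentence ``$P(G,X)<\infty$, whence $v_G\in\BV(U)$'' is too quick. Finite perimeter of $G$ in $X$ does not by itself give that the pasting $v_G$ lies in $\BV(U)$; you should either observe that $v_G=\ch_G+\tfrac12(1-F)(1-\ch_\Om)$ and use a Leibniz-type estimate for the product of the $N^{1,1}$ function $\tfrac12(1-F)$ with the bounded $\BV$ function $1-\ch_\Om$ (approximating $\ch_\Om$ by Lipschitz functions and passing to the limit), or note that the three pieces of $\|Dv_G\|$ you compute are finite and argue that this already shows $v_G\in\BV(U)$ by first checking it for Lipschitz approximants of $F$. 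Either way the gap is routine to close and does not affect the strategy.
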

Note that since $\partial\Omega$ is compact, we have in particular $\hcal(\partial\Omega)<\infty$, and then $P(\Omega,X)<\infty$ (see e.g. \cite[Proposition 6.3]{KKST}).
\begin{proof}
By \eqref{eq:def of theta}, we have $f\in L^1(\partial^*\Omega,\mathcal H)$ and so we can extend
$f$ to a function $f\in L^1(\dOm,\mathcal H)$, e.g. by zero extension.
By Corollary \ref{cor:extending from boundary to the whole space}, there is an extension of $f$, still denoted simply by $f$, such that $f\in N^{1,1}(X)$.
We know that every function in the class $N^{1,1}(X)$ is $1$-quasicontinuous,
see~\cite[Theorem 1.1]{BBS} or \cite[Theorem 5.29]{BB}. Therefore by~\cite[Proposition 3.4]{BBM}
we know that for every $t\in\R$, $\{f>t\}$ and $\{f<t\}$ are $1$-quasiopen.
Then by~\cite[Lemma~2.3]{Fug71}, $\{t_1<f<t_2\}$ is also $1$-quasiopen
for any $t_1,t_2\in\R$.

Let $E\subset \Om$ such that $P(E,\Om)<\infty$. By Cavalieri's principle,
\begin{align}
\notag
&I(\ch_E)=P(E,\Omega)  + \int_0^1 \int_{\{f>t\}}T\ch_E\, dP(\Omega,\cdot)\,dt
- \int_0^1 \int_{\{f<-t\}}T\ch_E\, dP(\Omega,\cdot)\,dt\\
\notag
&\quad =\int_0^1 \Big[P(E,\Omega) \\
&\qquad\quad +  P(\Omega,\{T\ch_E=1\}\cap \{f>t\})
- P(\Omega,\{T\ch_E=1\}\cap \{f<-t\})\Big]\,dt.%
\label{eq:representation for functional of chi E}
\end{align}

Fix $t\in (0,1)$. Suppose $E_i\subset \Omega$, $i\in\N$, are such that $P(E_i,\Om)<\infty$
and $\ch_{E_i}\to \ch_E$ in $L^1(\Om)$ (and thus in fact in $L^1(X)$).
By Theorem \ref{thm:extendability of sets of finite perimeter}, also $P(E_i,X)<\infty$.
By lower semicontinuity and Lemma \ref{lem:estimate for perimeter in whole space}, we have
\[
P(E,X)\le\liminf_{i\to\infty}P(E_i,X)\le \liminf_{i\to\infty}I(\ch_{E_i})+2P(\Om,X),
\]
where we can assume the limit on the right-hand side to be finite. Thus $P(E,X)<\infty$.
By Proposition \ref{prop:lower semicontinuity in quasiopen sets}, we now have
\begin{align*}
 P(E&, \Omega\cap \{f>t\})+P(E,\partial\Omega\cap \{f>t\})\\
& = P(E,\{f>t\})\\
& \le \liminf_{i\to\infty} P(E_i,\{f>t\})\\
& =\liminf_{i\to\infty} \Big( P(E_i,\Omega\cap \{f>t\})+P( E_i,\partial\Omega\cap \{f>t\}) \Big).
\end{align*}
Thus, by Lemma \ref{lem:coincidence of perimeter of E and Omega},
\begin{align}
\notag &P(E,\Omega\cap \{f>t\})+P(\Omega,\{T\ch_E=1\} \cap\{f>t\})\\
&\quad\le \liminf_{i\to\infty} \Big( P(E_i,\Omega\cap \{f>t\})+P(\Omega,\{T\ch_{E_i}=1\} \cap\{f>t\})\Big).
\label{eq:first lower semicontinuity formula}
\end{align}
Since also $\ch_{\Omega\setminus E_i}\to \ch_{\Omega\setminus E}$ in $L^1(X)$, by the lower semicontinuity of perimeter we also get
\begin{align*}
P(\Omega& \setminus E,\Omega\cap \{f<-t\})+P(\Omega\setminus E,\partial\Omega\cap \{f<-t\})\\
& = P(\Omega\setminus E,\{f<-t\})\\
& \le \liminf_{i\to\infty} P(\Omega\setminus E_i,\{f<-t\})\\
& =\liminf_{i\to\infty} \Big( P(\Omega\setminus E_i,\Omega\cap \{f<-t\})+P(\Omega\setminus E_i,\partial\Omega\cap \{f<-t\}) \Big).
\end{align*}
Note that $T\ch_{\Omega\setminus E}(x)=1$ if and only if $T\ch_{E}(x)=0$. Thus
by Lemma \ref{lem:coincidence of perimeter of E and Omega},
\begin{align*}
&P(\Omega\setminus E,\Omega\cap \{f<-t\})+P(\Omega,\{T\ch_E=0\} \cap\{f<-t\})\\
&\qquad\le \liminf_{i\to\infty} \Big( P(\Omega\setminus E_i,\Omega\cap \{f<-t\})+P(\Omega,\{T\ch_{E_i}=0\} \cap\{f<-t\})\Big).
\end{align*}
By subtracting $P(\Omega,\{f<-t\})$ from both sides and noting that $P(F,A)=P(\Omega\setminus F,A)$ for any $\mu$-measurable $F\subset X$ and any set $A\subset \Omega$, we obtain
\begin{align}
\notag
P(E&,\Omega\cap \{f<-t\})-P(\Omega,\{T\ch_E=1\} \cap\{f<-t\})\\
&\le \liminf_{i\to\infty} \Big(P(E_i,\Omega\cap \{f<-t\})-P(\Omega,\{T\ch_{E_i}=1\} \cap\{f<-t\})\Big).
\label{eq:second lower semicontinuity formula}
\end{align}
By the fact that $P(E,\cdot)$ is a finite measure, for $\mathcal L^1$-a.e.\@ $t\in (0,1)$ we have
\[
P(E,\Omega\cap (\{f=t\}\cup\{f=-t\}))=0.
\]
For such $t$, by \eqref{eq:first lower semicontinuity formula}, and \eqref{eq:second lower semicontinuity formula} and using lower semicontinuity once more, in the $1$-quasiopen set $\{-t< f< t\}$,
\begin{align}
P&(E,\Omega)+ P(\Omega,\{T\ch_E=1\}\cap \{f>t\})- P(\Omega,\{T\ch_E=1\}\cap \{f<-t\}) \notag\\
&= P(E,\Omega\cap \{f>t\})+P(E,\Omega\cap \{f<-t\})+P(E,\Omega\cap \{-t<f<t\}) \notag\\
&\qquad +P(\Omega,\{T\ch_E=1\}\cap \{f>t\})- P(\Omega,\{T\ch_E=1\}\cap \{f<-t\}) \notag\\
&\le \liminf_{i\to\infty} \Big(P(E_i,\Omega\cap \{f>t\})+P(\Omega,\{T\ch_{E_i}=1\} \cap\{f>t\})\Big) \notag\\
&\qquad +\liminf_{i\to\infty} \Big(P(E_i,\Omega\cap \{f<-t\})-P(\Omega,\{T\ch_{E_i}=1\} \cap\{f<-t\})\Big) \notag\\
&\qquad +\liminf_{i\to\infty}P(E_i,\Omega\cap \{-t\le f\le t\}) \notag\\
&\le \liminf_{i\to\infty} \Big(P(E_i,\Omega) \notag\\
&\qquad  +P(\Omega,\{T\ch_{E_i}=1\} \cap\{f>t\})\Big)-P(\Omega,\{T\ch_{E_i}=1\} \cap\{f<-t\})\Big).
\label{eq:lower semicontinuity for level sets}
\end{align}

By combining \eqref{eq:representation for functional of chi E} and \eqref{eq:lower semicontinuity for level sets} and using Fatou's lemma, we obtain
\[
I(\ch_E)\le \liminf_{i\to\infty} I(\ch_{E_i}).
\]
Denoting $I(\cdot)=I_f(\cdot)$ to make the dependence on $f$ explicit, we have also
\[
I_f(-\ch_E)=I_{-f}(\ch_E)\le \liminf_{i\to\infty} I_{-f}(\ch_{E_i})=\liminf_{i\to\infty} I_f(-\ch_{E_i}),
\]
and thus the claim is proved.
\end{proof}
\begin{theorem}\label{thm:existence of solutions}
Let $\Om$ and $f$ be as in Lemma \ref{lem:lower semicontinuity of I}.
Then the restricted Neumann problem given on page~\pageref{restricted minimization problem}
has a solution.
\end{theorem}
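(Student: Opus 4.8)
The plan is to run the direct method of the calculus of variations, but organised so that it suffices to minimise the single functional $E\mapsto I(\ch_E)$ over $\mu$-measurable $E\subset\Om$; the companion part $E\mapsto I(-\ch_E)$ and the reassembly into an admissible competitor are then handled by Lemma~\ref{lem:I(E)=I(-E)} and Lemma~\ref{lem:both E1 and E2 are minimizers}. First I would set $m_1\coloneq\inf\{I(\ch_E):\,E\subset\Om\text{ is }\mu\text{-measurable}\}$. Since $T\ch_E\in\{0,1\}$ and $-1\le f\le 1$, we have $I(\ch_E)\ge -P(\Om,X)$ for every such $E$, while $I(\ch_\emptyset)=0$; recalling that $P(\Om,X)<\infty$ by the remark following the statement, this gives $m_1\in[-P(\Om,X),0]$, in particular $m_1$ is finite.

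Next I would take a minimising sequence of $\mu$-measurable sets $E_i\subset\Om$ with $I(\ch_{E_i})\to m_1$, assuming without loss of generality that $I(\ch_{E_i})\le m_1+1$ for every $i$. By Lemma~\ref{lem:estimate for perimeter in whole space} (whose hypotheses, the exterior measure density condition~\eqref{eq:exterior measure density condition} and $-1\le f\le 1$, are part of the present hypotheses), $P(E_i,X)\le m_1+1+2P(\Om,X)\eqcolon C<\infty$, and trivially $\mu(E_i)\le\mu(\Om)<\infty$. Hence $(\ch_{E_i})_i$ is bounded in $\BV(X)$ and all the sets $E_i$ are contained in the fixed compact set $\overline{\Om}$. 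By the compactness property of $\BV$ functions in the metric setting (see \cite{M}) a subsequence, which I do not relabel, converges in $L^1_{\loc}(X)$ to some $\ch_E$; since all these functions vanish $\mu$-a.e.\@ outside the bounded set $\Om$, the convergence is in fact in $L^1(X)$, and after replacing $E$ by $E\cap\Om$ (which changes neither $\ch_E$ $\mu$-a.e.\@ nor $I(\ch_E)$) we may take $E\subset\Om$ to be $\mu$-measurable. In particular $\ch_{E_i}\to\ch_E$ in $L^1(\Om)$, so Lemma~\ref{lem:lower semicontinuity of I} yields $I(\ch_E)\le\liminf_{i\to\infty}I(\ch_{E_i})=m_1$, forcing $I(\ch_E)=m_1$. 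Thus $E$ minimises $E'\mapsto I(\ch_{E'})$ over all $\mu$-measurable $E'\subset\Om$.

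Finally I would assemble the solution. Put $E_1\coloneq E$ and $E_2\coloneq\Om\setminus E_1$, which are disjoint and $\mu$-measurable. For any $\mu$-measurable $F\subset\Om$, Lemma~\ref{lem:I(E)=I(-E)} gives $I(-\ch_{E_2})=I(-\ch_{\Om\setminus E_1})=I(\ch_{E_1})\le I(\ch_{\Om\setminus F})^{\,}$... more precisely $I(-\ch_{E_2})=I(\ch_{E_1})=m_1\le I(\ch_{\Om\setminus F})=I(-\ch_{F})$, so $E_2$ minimises $E'\mapsto I(-\ch_{E'})$. By Lemma~\ref{lem:both E1 and E2 are minimizers}, the function $\ch_{E_1}-\ch_{E_2}=2\ch_{E_1}-1$, which satisfies $-1\le 2\ch_{E_1}-1\le 1$, solves the restricted Neumann problem, completing the proof. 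The genuinely delicate point in this scheme is the compactness step: it relies on controlling $P(E_i,X)$ and not merely $P(E_i,\Om)$, which is exactly what Lemma~\ref{lem:estimate for perimeter in whole space} (hence ultimately the exterior measure density condition) provides, letting us work inside the globally compact setting of $\BV(X)$-functions with uniformly compact support; everything else is a routine application of the lower semicontinuity Lemma~\ref{lem:lower semicontinuity of I} together with the structural lemmas of Section~\ref{sec:prelis}.
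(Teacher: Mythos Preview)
Your proof is correct and follows the same underlying strategy as the paper's: the direct method, with compactness in $\BV(X)$ supplied by Lemma~\ref{lem:estimate for perimeter in whole space} and lower semicontinuity by Lemma~\ref{lem:lower semicontinuity of I}. The organisation is slightly different. The paper takes a minimising sequence $(u_i)$ for the full functional $I$ over $\{-1\le v\le 1\}$, invokes Proposition~\ref{prop:existence of set minimizers} to replace each $u_i$ by $\ch_{E_1^i}-\ch_{E_2^i}$, bounds $P(E_1^i,X)$ and $P(E_2^i,X)$ separately, extracts limits $E_1,E_2$ of both sequences by compactness, and then applies Lemma~\ref{lem:lower semicontinuity of I} twice. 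You instead minimise the single set-functional $E\mapsto I(\ch_E)$, extract one limit $E_1$, and recover the other half via $E_2=\Om\setminus E_1$ using Lemma~\ref{lem:I(E)=I(-E)} and Lemma~\ref{lem:both E1 and E2 are minimizers}. Your route is marginally more economical (one compactness extraction instead of two), at the cost of invoking the structural lemmas; the paper's route is more transparent about why the limit competes in the original problem. Both are entirely sound.
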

\begin{proof}
Take a sequence $(u_i)\subset\BV(\Omega)$ with $-1\le u_i\le 1$ and
\[
I(u_i)<\inf_{v\in\BV(\Omega),\,\| v\|_{L^{\infty}(\Om)}
\le 1} I(v)+1/i\quad\textrm{for all }i\in\N.
\]
By Proposition \ref{prop:existence of set minimizers} we can assume that $u_i=\ch_{E_1^i}-\ch_{E_2^i}$ for 
disjoint sets $E_1^i,E_2^i\subset\Omega$, $i\in\N$.
By Lemma~\ref{lem:estimate for perimeter in whole space} and 
Lemma~\ref{lem:splitting into positive and negative parts},
\begin{align*}
P(E^i_1,X)\le  I(\ch_{E^i_1})+2P(\Omega,X)
&=  I(u_i)-I(-\ch_{E^i_2})+2P(\Omega,X)\\
&\le  I(u_i)+3P(\Omega,X),
\end{align*}
and similarly for the sets $E_2^i$.
We conclude that the sequences $P(E^i_1,X)$ and $P(E^i_2,X)$ are bounded, and so by
\cite[Theorem 3.7]{M} there are sets $E_1,E_2\subset\Om$ such that
$\ch_{E_1^i}\to \ch_{E_1}$ in $L^1(X)$ and $\ch_{E_2^i}\to \ch_{E_2}$ in $L^1(X)$,
passing to a subsequence if needed (without relabeling the sequences).
Then, clearly also $\mu(E_1\cap E_2)=0$.
By lower semicontinuity, $P(E_1,X)<\infty$ and $P(E_2,X)<\infty$.
Thus by Lemma \ref{lem:splitting into positive and negative parts}
and Lemma \ref{lem:lower semicontinuity of I},
\begin{align*}
I(\ch_{E_1}-\ch_{E_2})
&= I(\ch_{E_1})+I(-\ch_{E_2})\\
&\le \liminf_{i\to\infty} I(\ch_{E_1^i})+\liminf_{i\to\infty} I(-\ch_{E_2^i})\\
&\le \liminf_{i\to\infty} \left(I(\ch_{E_1^i})+ I(-\ch_{E_2^i})\right)\\
&=\liminf_{i\to\infty} \left(I(\ch_{E_1^i}-\ch_{E_2^i})\right)\ 
=\liminf_{i\to\infty} I(u_i).
\end{align*}
Thus, $\ch_{E_1}-\ch_{E_2}$ is a solution.
\end{proof}
\section{When $f\colon\partial\Om\to\{-1,0,1\}$}
\label{sec:when f is integer}
In this section, we always assume that $\Omega\subset X$ is a nonempty bounded domain
with $P(\Om,X)<\infty$, such that
for any $u\in \BV(\Omega)$, the trace $Tu(x)$
exists for $\hcal$-a.e.\@ $x\in\partial^*\Omega$.
We also assume that the boundary data $f\in L^1(\partial^*\Omega,P(\Om,\cdot))$
satisfies~\eqref{eq:f integrates to zero}, that is, $\int_{\partial^*\Om}f\, dP(\Om,\cdot)=0$.
To know that minimizers exist, we need $-1\le f\le 1$ as in the previous section, see also~\cite{MST}.

In light of the result from the previous sections that $\ch_{E_1}-\ch_{E_2}$ is 
a solution for some choice of $E_1, E_2\subset\Om$, we see that the
``relative outer normal derivative'' of the solution (in relation to the total variation 
of the function) is directed either entirely outward (i.e., $\partial_\eta u/\| Du\|=\pm1$ in the 
Euclidean setting) or has vanishing derivative. Thus, in the Euclidean setting,
if one is to make sense of $f$ as the relative outer normal derivative of the solution,
then the only permissible values
one has for $f$ are $0$, $1$, and $-1$. 
This section is dedicated to the
study of boundary behavior of solutions $\ch_{E_1}-\ch_{E_2}$ for such $f$.
\begin{definition}\label{defn:relative-bdry}
For $E\subset\Om$ of finite perimeter in $\Om$, let 
$\partial_E\Om$ denote the collection of all points $x\in\partial^*\Om$ for which
$T\ch_E(x)=1$.
\end{definition}
Suppose $E_1,E_2\subset\Om$ are disjoint sets such that $\ch_{E_1}-\ch_{E_2}$ solves the
restricted Neumann problem.
Note that
\[
 0\ge I(\ch_{E_1})=P(E_1,\Om)-P(\Om, \partial_{E_1}\Om\cap\{f=-1\})
 +P(\Om, \partial_{E_1}\Om\cap\{f=1\}).
\]
Therefore
\begin{equation}\label{eq:E1-vs-f}
P(E_1,\Om)+P(\Om, \partial_{E_1}\Om\cap\{f=1\})\le P(\Om, \partial_{E_1}\Om\cap\{f=-1\}).
\end{equation}
From Lemma~\ref{lem:I(E)=I(-E)}, we can conclude that
$I(\ch_{E_1}-\ch_{E_2})=2 I(\ch_{E_1})=2 I(-\ch_{E_2})$. If
$I(\ch_{E_1}-\ch_{E_2})\ne 0$, then $I(\ch_{E_1}-\ch_{E_2})<0$, and hence
$I(\ch_{E_1})<0$.
If $P(E_1,\Om)=0$, then by the facts that $X$ supports the relative isoperimetric inequality
\eqref{eq:relative isoperimetric inequality}
and $\Om$ is connected,
we must have either that $\mu(\Om\setminus E_1)=0$ or $\mu(E_1)=0$, from either of which
we would have that $I(\ch_{E_1})=0$. Thus we must have $P(E_1,\Om)>0$.
However, from this and \eqref{eq:E1-vs-f}  we can only infer that
\begin{equation}\label{E1-f=-1Weak}
P(\Om, \partial_{E_1}\Om\cap\{f=1\})<P(\Om, \partial_{E_1}\Om\cap\{f=-1\}).
\end{equation}
On the set $\partial_{E_1}\Om$ one should understand that the relative outer normal
derivative of $\ch_{E_1}-\ch_{E_2}$ must be $-1$; thus on the set
$\partial_{E_1}\Om\cap\{f=1\}$ the relative outer normal derivative of $\ch_{E_1}
-\ch_{E_2}$ does not agree with the boundary data $f=1$. The above inequality 
therefore implies that the relative outer normal derivative of $\ch_{E_1}-\ch_{E_2}$
agrees more often than not with the boundary data $f$ where $f\ne 0$. We would
prefer to obtain a better quantitative version of this statement.
\begin{proposition}
Suppose that $\Om$, as a metric measure space equipped with the measure $\mu\lfloor_\Om$, 
supports a $(1,1)$-Poincar\'e inequality and a measure density
condition: there is some $C\ge 1$ and $r_0>0$ such that 
\begin{equation}
\label{eq:int+ext_measure-density}
\mu(B(x,r)\cap\Om)\ge\frac{\mu(B(x,r))}{C}
\end{equation}
for every $x\in\partial\Om$ and $0<r<r_0$.
Suppose also that $\dOm$ is codimension $1$ Ahlfors regular
as defined in \eqref{eq:boundary codim Ahlfors regularity}.
Assume that $\emptyset \neq E_1\subsetneq \Om$ 
is such that $\ch_{E_1}-\ch_{\Om \setminus E_1}$ solves the restricted Neumann problem
with boundary data $f\colon \partial^*\Om \to \{-1, 0, 1\}$.
If $\mu(E_1)\le \mu(\Om \setminus E_1)$, then
\begin{equation}\label{eq:E_1-betterEst}
P(\Om,\partial_{E_1}\Om\cap\{f=1\})\le
\frac{C_\Om - 1}{C_\Om + 1}\,
  P(\Om,\partial_{E_1}\Om\cap\{f=-1\}),
\end{equation}
where the constant $C_\Om > 1$ is independent of $f$ and $E_1$. Otherwise,
\[
P(\Om,\partial_{\Om\setminus E_1}\Om\cap\{f=-1\})\le
\frac{C_\Om - 1}{C_\Om + 1}\,
  P(\Om,\partial_{\Om \setminus E_1}\Om\cap\{f=1\}).
\]
\end{proposition}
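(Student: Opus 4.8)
The plan is to upgrade the optimality inequality \eqref{eq:E1-vs-f} by means of a geometric comparison estimate of the form $P(\Om,\partial_{E_1}\Om)\le C_\Om\,P(E_1,\Om)$, valid for every $E_1\subset\Om$ with $\mu(E_1)\le\mu(\Om)/2$ and with $C_\Om>1$ depending only on $\Om$; it is through this estimate that the Poincar\'e inequality on $\Om$, the measure density condition \eqref{eq:int+ext_measure-density}, and the codimension $1$ Ahlfors regularity of $\dOm$ get used. First I would reduce to the case $\mu(E_1)\le\mu(\Om\setminus E_1)$: the ``Otherwise'' conclusion is exactly the first one applied to the pair $(\Om\setminus E_1,-f)$, since $\ch_{\Om\setminus E_1}-\ch_{E_1}=-(\ch_{E_1}-\ch_{\Om\setminus E_1})$ solves the restricted Neumann problem with data $-f$ (because $I_{-f}(-v)=I_f(v)$ for every $v\in\BV(\Om)$), and since $-f$ still maps into $\{-1,0,1\}$, satisfies \eqref{eq:f integrates to zero}, $\mu(\Om\setminus E_1)\le\mu(E_1)$, and $\{-f=\pm1\}=\{f=\mp1\}$. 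If $\mu(E_1)=0$ then $T\ch_{E_1}=0$ $\hcal$-a.e.\ on $\partial^*\Om$, so $\partial_{E_1}\Om$ is $\hcal$-null and \eqref{eq:E_1-betterEst} holds trivially; hence one may assume $0<\mu(E_1)\le\mu(\Om)/2$, in which case $P(E_1,\Om)>0$ by the connectedness of $\Om$ as noted before the statement.

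The passage from the comparison estimate to \eqref{eq:E_1-betterEst} is elementary. Put $a=P(\Om,\partial_{E_1}\Om\cap\{f=1\})$, $b=P(\Om,\partial_{E_1}\Om\cap\{f=-1\})$ and $c=P(\Om,\partial_{E_1}\Om\cap\{f=0\})$, so that $a+b+c=P(\Om,\partial_{E_1}\Om)$, and recall from \eqref{eq:E1-vs-f} that $P(E_1,\Om)+a\le b$, in particular $P(E_1,\Om)\le b-a$. Granting the comparison estimate, $a+b\le a+b+c=P(\Om,\partial_{E_1}\Om)\le C_\Om P(E_1,\Om)\le C_\Om(b-a)$, and rearranging $a(C_\Om+1)\le b(C_\Om-1)$ gives exactly \eqref{eq:E_1-betterEst}.

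It remains to prove the comparison estimate. Since $\Om$ is bounded and connected, $\mu\lfloor_\Om$ is doubling (doubling at points of $\dOm$ being forced by \eqref{eq:int+ext_measure-density}), and $\Om$ supports a $(1,1)$-Poincar\'e inequality, a standard chaining argument (passing if necessary to the completion $\overline\Om$, which carries the same measure since $\mu(\dOm)=0$) promotes this to a global Poincar\'e inequality on $\Om$; applied to locally Lipschitz approximations of $\ch_{E_1}$ and using $\mu(E_1)\le\mu(\Om)/2$ it yields
\[
\frac{\mu(E_1)}{\mu(\Om)}\le\frac{2\mu(E_1)\mu(\Om\setminus E_1)}{\mu(\Om)^2}\le C\,\diam(\Om)\,\frac{P(E_1,\Om)}{\mu(\Om)},
\]
so $\mu(E_1)\le C\diam(\Om)P(E_1,\Om)$ with $C=C(\Om)$. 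Next, because $\dOm$ is codimension $1$ Ahlfors regular the trace operator $\BV(\Om)\to L^1(\dOm,\hcal)$ is bounded, and since $T\ch_{E_1}$ is $\{0,1\}$-valued $\hcal$-a.e.\ on $\partial^*\Om$ with $\partial_{E_1}\Om=\{T\ch_{E_1}=1\}\cap\partial^*\Om$,
\[
\hcal(\partial_{E_1}\Om)=\int_{\partial^*\Om}T\ch_{E_1}\,d\hcal\le C_T\bigl(\mu(E_1)+P(E_1,\Om)\bigr)\le C_T\bigl(C\diam(\Om)+1\bigr)P(E_1,\Om).
\]
Finally $\partial_{E_1}\Om\subset\partial^*\Om$, so \eqref{eq:def of theta} together with $\theta_\Om\le C_d$ gives $P(\Om,\partial_{E_1}\Om)=\int_{\partial_{E_1}\Om}\theta_\Om\,d\hcal\le C_d\hcal(\partial_{E_1}\Om)$; combining the displays, the comparison estimate holds with $C_\Om=\max\{C_dC_T(C\diam(\Om)+1),\,2\}>1$, depending only on $\Om$.

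The main obstacle is the comparison estimate, and within it the two ingredients that actually consume the hypotheses: the boundedness of the trace operator $\BV(\Om)\to L^1(\dOm,\hcal)$ for domains with codimension $1$ Ahlfors regular boundary, and the upgrade of the $(1,1)$-Poincar\'e inequality on $\Om$ to a global one (needing the measure density condition so that $\mu\lfloor_\Om$ is doubling up to $\dOm$, with connectedness and boundedness driving the chaining). Both are known for domains of the type considered here but are not entirely routine; everything else above is bookkeeping, the only delicate point being the $\hcal$- and $P(\Om,\cdot)$-measurability of the sets $\partial_{E_1}\Om\cap\{f=1\}$, $\partial_{E_1}\Om\cap\{f=-1\}$, $\partial_{E_1}\Om\cap\{f=0\}$, which is immediate from the Borel measurability of the trace and of $f$.
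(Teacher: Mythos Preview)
Your argument is correct and follows essentially the same route as the paper: bound the trace mass of $\ch_{E_1}$ on $\partial^*\Om$ by $\|T\|(\mu(E_1)+P(E_1,\Om))$, absorb $\mu(E_1)$ into $P(E_1,\Om)$ via the Poincar\'e inequality on $\Om$ using $\mu(E_1)\le\mu(\Om)/2$, and then feed the resulting bound $a+b\le C_\Om P(E_1,\Om)$ into \eqref{eq:E1-vs-f}. The only cosmetic differences are that the paper uses the trace operator valued in $L^1(\partial^*\Om,P(\Om,\cdot))$ directly (citing the trace theorem of Lahti--Shanmugalingam) rather than passing through $L^1(\dOm,\hcal)$ and the pointwise bound $\theta_\Om\le C_d$, and that you are more explicit about the symmetry reduction to the ``Otherwise'' case, the global upgrade of the Poincar\'e inequality, and the artificial enforcement of $C_\Om>1$.
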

It is straightforward to check that \eqref{eq:int+ext_measure-density} can equivalently be
required for every $x\in\overline{\Om}$, possibly with different constants $C,r_0$.
Moreover, we will see that one can express $C_\Om = \|T\| \bigl(1+2C_{P_\Om}\diam(\Om)\bigr)$,
where $C_{P_\Om}>0$ is the constant associated with the Poincar\'e inequality on $\Om$ and
$\|T\|$ is the norm of the trace operator $T\colon \BV(\Om) \to L^1(\partial^*\Om,P(\Om,\cdot))$.
\begin{proof}
We will focus only on the situation when $\mu(E_1) \le \mu(\Om\setminus E_1)$. The other case can be proven analogously.
According to \cite[Theorem 5.5]{LaSh}, the trace operator
$T\colon \BV(\Om) \to L^1(\partial^*\Om,P(\Om,\cdot))$ is bounded, that is,
\begin{align*}
P(\Om,\partial_{E_1}\Om\cap\{f=1\})+P(\Om,\partial_{E_1}\Om\cap\{f=-1\})&\le 
\int_{\partial^*\Om}T\ch_{E_1}\, dP(\Om,\cdot)\\
  &\le \|T\| \bigl(\mu(E_1)+P(E_1,\Om)\bigr).
\end{align*}
The $(1,1)$-Poincar\'e inequality on $\Om$ yields that
\begin{equation}
 \label{eq:PI-for-Om}
 \frac{\mu(E_1)\mu(\Om\setminus E_1)}{\mu(\Om)}\le C_{P_\Om}  \diam(\Om) P(E_1,\Om),
\end{equation}
where $C_{P_\Om}>0$.
As $\mu(\Om \setminus E_1) \ge \mu(\Om)/2$ due to the assumed relation $\mu(E_1) \le \mu(\Om\setminus E_1)$, we obtain that $\mu(E_1)\le 2 C_{P_\Om} \diam(\Om) P(E_1,\Om)$. Therefore
\begin{align*}
P(\Om,\partial_{E_1}\Om\cap\{f=1\})+P(\Om,\partial_{E_1}\Om\cap\{f=-1\})& \\
\le \|T\| \bigl(1+2C_{P_\Om}\diam(\Om)\bigr) P(E_1,\Om) & \eqcolon C_\Om \, P(E_1,\Om).
\end{align*}
Consequently, we obtain from~\eqref{eq:E1-vs-f} that
\begin{align*}
P(\Om,\partial_{E_1}\Om\cap\{f&=1\})
  +P(\Om,\partial_{E_1}\Om\cap\{f=-1\})\\
&\le C_\Om\bigl(P(\Om,\partial_{E_1}\Om\cap\{f=-1\})-P(\Om,\partial_{E_1}\Om\cap\{f=1\})\bigr),
\end{align*}
which immediately implies the validity of~\eqref{eq:E_1-betterEst}.
\end{proof}
The inequality $\mu(E_1)\le \mu(\Om\setminus E_1)$ turns out to be crucial
when applying the estimate~\eqref{eq:PI-for-Om} to compare $\mu(E_1)$ with $P(E_1, \Om)$.
Otherwise, we cannot obtain \eqref{eq:E_1-betterEst} with a constant $C_\Om$ independent
of $E_1$, see Example \ref{exa:rectangle_bad-bdry-behavior}.

Nevertheless, we can define $C(E_1) = C_{P_\Om} \diam(\Om) \mu(\Om) / \mu(\Om \setminus E_1)$.
Then, \eqref{eq:PI-for-Om} leads to $\mu(E_1) \le C(E_1) P(E_1, \Om)$ and hence to the
quantitative estimate
\begin{equation}
  \label{eq:E_1-betterEst_largeE_1}
  P(\Om,\partial_{E_1}\Om\cap\{f=1\})
  <\frac{\|T\|(1+C(E_1)) - 1}{\|T\|(1+C(E_1)) + 1}\,P(\Om,\partial_{E_1}\Om\cap\{f=-1\}).
\end{equation}

If the $L^1$-boundedness of the trace operator is established by other means, we can
remove the assumptions of a $(1,1)$-Poincar\'e inequality for $\Om$ and
the measure density condition \eqref{eq:int+ext_measure-density}.
Then, we can bypass \eqref{eq:PI-for-Om} by
setting $C(E_1)=\mu(E_1)/P(E_1,\Om)$ to get \eqref{eq:E_1-betterEst_largeE_1}.

The following example shows that it is in general impossible to obtain an estimate better 
than~\eqref{E1-f=-1Weak} in case we wish the constants to be independent of $E_1$. On 
the other hand, the situation is different if $\partial \Om$ is of positive mean curvature in 
the sense of \cite{LMSS}, see Definition~\ref{defn:positive-curv} below. 
\begin{example}
\label{exa:rectangle_bad-bdry-behavior}
Fix $0<L<1/8$. Let $\Om=(0,1)^2$ be the
unit square in $\R^2$ (unweighted), and let $F_1\subset\partial\Om$ be given by the
union of the four line segments:
one connecting $(1-L,1)$ to $(1,1)$, one connecting $(1,1-L)$ to $(1,1)$, one 
connecting $(0,0)$ to $(0,\tfrac14)$, 
and one connecting $(0,\tfrac34)$ to $(0,1)$, the first two of which are each of 
length $L$ and the latter two of which are each of length $\tfrac14$. Let 
$F_2\subset\partial\Om$ be the union of three line segments, one connecting
$(0,\tfrac14)$ to $(0,\tfrac34)$ of length $\tfrac12$, and the other two,
each of length $L$, one connecting $(0,1)$ to $(L,1)$ and the other connecting
$(0,0)$ to $(L,0)$.
Let $f=\ch_{F_1}-\ch_{F_2}$. Now the restricted Neumann problem has exactly one solution, given by
$u=\ch_{E_1}-\ch_{E_2}$, where $E_1 = \Om \setminus E_2$ and $E_2$ is the triangular region
in $\Om$ with vertices $(1-L,1)$, $(1,1)$, and $(1,1-L)$. 
\end{example}
\begin{wrapfigure}{r}{71mm}
  \vspace{-15pt}
  \centering
  \includegraphics[width=65mm,height=46.5mm,page=1]{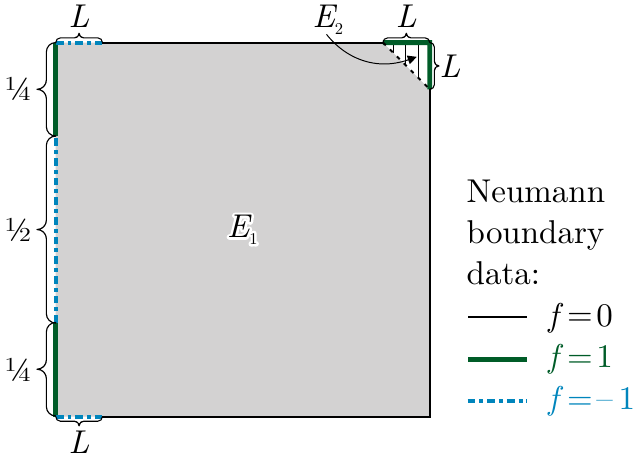}
\end{wrapfigure}

Using the above solution to the given Neumann problem, let us now show that it is in general impossible to obtain an estimate 
of the form~\eqref{eq:E_1-betterEst} in case $\mu(E_2)<\mu(E_1)$,
with a constant $C_\Om$ independent of $E_1$.
Indeed, we have
$P(\Om,\partial_{E_1}\Om\cap\{f=-1\})=2L + \tfrac12$
and
$P(\Om,\partial_{E_1}\Om\cap\{f=1\})=\tfrac12.$
Therefore, 
\[
\lim_{L\to 0}\frac{P(\Om,\partial_{E_1}\Om\cap\{f=-1\})}{P(\Om,\partial_{E_1}\Om\cap\{f=1\})}=1.
\]

\vspace{\topsep}

The example above heavily relies on the fact that the boundary data are non-zero on flat parts of 
$\dOm$. In the remaining part of this section, we will discuss the case when $\partial \Om$ is of 
positive mean curvature in the sense of \cite{LMSS}; see also \cite{SWZ}.
\begin{definition}\label{defn:positive-curv}
	Let
	$h\in \BV_{\loc}(X)$. We say that $u\in \BV_{\loc}(X)$ is a
	\emph{weak solution to the Dirichlet problem for least gradients in $\Om$ with
		boundary data $h$} if $u=h$ on $X\setminus\Om$ and
	\[
	\| Du\|(\overline{\Om})\le \| Dv\|(\overline{\Om})
	\]
	whenever $v\in \BV_{\loc}(X)$ with $v=h$ on $X\setminus\Om$.
\end{definition}
A weak solution exists whenever $h\in\BV_{\loc}(X)$ with $\| Dh\|(X)<\infty$,
see \cite[Lemma 3.1]{LMSS}.
\begin{definition}\label{def:positive-curve}
	We say that the boundary $\partial\Omega$
	has {\em positive mean curvature} if there exists a
	non-decreasing function $\varphi\colon (0,\infty)\to(0,\infty)$ and a constant $r_0>0$ such that for
	all $z\in\partial\Omega$ and all $0<r<r_0$ with $P(B(z,r),X)<\infty$,
	we have that
	$u^\vee \geq 1$ everywhere on $B(z,\varphi(r))$ for any weak solution $u$
	to the the Dirichlet problem for least gradients in $\Om$ with
	boundary data $\ch_{B(z,r)}$.
\end{definition}
Recall that the perimeter measure $P(E, \cdot)$  relates to $\hcal\lfloor_{\partial^*\!E}$
via the function $\theta_E\colon X \to [\alpha, C_d]$ as stated in \eqref{eq:def of theta}.
\begin{definition}[{\cite[Definition 6.1]{AMP}}]\label{def:local}
We say that $X$ is a \emph{local space} if, given any two sets of locally finite perimeter 
$E_1\subset E_2\subset X$, we have $\theta_{E_1}(x)=\theta_{E_2}(x)$ for $\hcal$-a.e.
$x\in \partial^*E_1\cap\partial^*E_2$.
\end{definition}
The assumption $E_1\subset E_2$ can in fact be dropped as shown in the discussion after \cite[Definition 5.9]{HKLL}.
See \cite{AMP} and \cite{L} for some examples of local spaces. See also
\cite[Example 5.2]{LaSh2} for an example of a space that fails to be local, despite
being equipped with a doubling measure that supports a Poincar\'e inequality.
\begin{theorem}
\label{thm:poscurv:bdry-data-agreement}
Suppose $X$ is a local space.
Assume that $\Om$ satisfies the exterior measure density 
condition~\eqref{eq:exterior measure density condition}, that $\mathcal H(\partial\Om)<\infty$,
and that $\partial\Om$ has 
positive mean curvature. Suppose that $\ch_{E_1}-\ch_{E_2}$ solves the restricted Neumann problem
with boundary data $f\colon \partial^*\Om\to \{-1,0,1\}$.
If $z\in\partial\Om$ such that $f=-1$ in a neighborhood of $z$, then $T\ch_{E_1}(z)=1$.

Moreover, if $u\in \BV(\Om)$ is any solution to the 
restricted Neumann problem with boundary data $f$
and $f=-1$ on $B(z,r)\cap\dOm$ for some $r>0$, then 
$u=1$ on $B(z,\varphi(r))\cap\Om$ and hence $Tu(z)=1$.
\end{theorem}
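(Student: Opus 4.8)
The plan is to exploit the least-gradient property of $\ch_{E_1}$ (Proposition~\ref{prop:E1-leastGrad}) together with the definition of positive mean curvature, comparing $\ch_{E_1}$ with a suitable competitor built from the boundary ball. Fix $z \in \dOm$ with $f = -1$ on $B(z,r) \cap \dOm$; shrinking $r$ we may assume $P(B(z,r),X) < \infty$ (this holds for a.e.\ radius by the coarea formula, and the definition of positive mean curvature is stated for such $r$). First I would establish the second, stronger assertion, from which $T\ch_{E_1}(z) = 1$ will follow: indeed, by Proposition~\ref{prop:existence of set minimizers} and Lemma~\ref{lem:both E1 and E2 are minimizers}, if $u$ is any solution then the superlevel sets of $u_+$ give set-type solutions, so $\ch_{E_1}$ being $1$ on $B(z,\varphi(r))\cap\Om$ for the minimal such $E_1$ will force $u = 1$ there as well; alternatively one argues directly that $u_+ \ge \ch_{E_1}$ pointwise a.e.\ near $z$.

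The heart of the argument is to show $\ch_{E_1} = 1$ (i.e.\ $E_1 \supset \Om$ up to measure zero) on $B(z,\varphi(r)) \cap \Om$. Consider the weak solution $w$ to the Dirichlet problem for least gradients in $\Om$ with boundary data $\ch_{B(z,r)}$, which exists by \cite[Lemma~3.1]{LMSS} since $\|D\ch_{B(z,r)}\|(X) = P(B(z,r),X) < \infty$. By the definition of positive mean curvature, $w^\vee \ge 1$ on $B(z,\varphi(r))$. The strategy is: (i) $\ch_{E_1} \cup \ch_{B(z,r)\setminus\Om}$ (extended by the boundary data) should be comparable to $w$ in the least-gradient sense on $\overline\Om$; (ii) on $B(z,r)\cap\dOm$ we have $f=-1$, so enlarging $E_1$ to include more of $\Om$ near $z$ can only \emph{decrease} $I(\ch_{E_1})$, because each unit of extra perimeter on $\partial_{E_1}\Om \cap \{f=-1\}$ contributes $-1$ to $I$ while interior perimeter contributes $+1$; the least-gradient/minimality balance then forces $E_1$ to ``fill in'' near $z$ exactly as $w$ prescribes. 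Concretely, I would take the competitor $v = \max\{\ch_{E_1}, \min\{w, 1\}\}$ (suitably truncated to $[-1,1]$ and keeping the correct boundary behavior), use Lemma~\ref{lem:both E1 and E2 are minimizers} to compare $I(\ch_{E_1})$ with $I(\ch_{\{v > t\}})$ for a.e.\ $t$, and show via Lemma~\ref{lem:coincidence of perimeter of E and Omega} that the only way minimality survives is if $T\ch_{E_1} = 1$ on $B(z,\varphi(r)) \cap \dOm$ and $E_1 \supset B(z,\varphi(r))\cap\Om$ up to measure zero.

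The main obstacle is the interaction between the \emph{interior} least-gradient comparison and the \emph{boundary} trace term in $I$. The definition of positive mean curvature is phrased in terms of $\|Dw\|(\overline\Om)$ — i.e.\ it ``sees'' the boundary — so I would need Lemma~\ref{lem:coincidence of perimeter of E and Omega} to translate perimeter contributions on $\dOm$ into perimeter of $\Om$ restricted to trace-one sets, and crucially the locality hypothesis on $X$ (Definition~\ref{def:local}) to identify $\theta_{E_1} = \theta_\Om$ on the relevant part of $\dOm$, ensuring that sliding mass from $\Om$ into the $\{f=-1\}$ part of the boundary is accounted for correctly in the energy bookkeeping. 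Once that translation is in place, the argument is a barrier/comparison: if $T\ch_{E_1}(z) \ne 1$, then since $f = -1$ on a boundary neighborhood, replacing $\ch_{E_1}$ by $\max\{\ch_{E_1}, \ch_{F}\}$ where $F = \{w > 1/2\} \cup (\Om \cap B(z,\varphi(r)))$ strictly lowers $I$ (the added interior perimeter is controlled by $\|Dw\|(\overline\Om)$ via least-gradientness of $w$, while the gained boundary perimeter on $\{f=-1\}$ outweighs it because $w^\vee \ge 1$ forces the barrier to reach the boundary), contradicting Lemma~\ref{lem:both E1 and E2 are minimizers}. I expect the delicate point to be verifying the strict inequality — i.e.\ that the barrier genuinely touches $\dOm$ with positive perimeter density — which is exactly what the positive-mean-curvature hypothesis is designed to guarantee, so the proof should reduce to carefully unwinding Definition~\ref{def:positive-curve} at the level of $\hcal$-a.e.\ boundary points.
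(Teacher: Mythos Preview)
Your proposal assembles the right ingredients --- the weak Dirichlet solution $w$ (equivalently its superlevel set $K$) with boundary data $\ch_{B(z,r)}$, the locality hypothesis, Lemma~\ref{lem:coincidence of perimeter of E and Omega}, and the comparison competitor $E_1\cup K$ --- but the logical assembly has a genuine gap. You try to argue by a \emph{one-sided} contradiction with the Neumann functional: ``if $T\ch_{E_1}(z)\ne 1$, then replacing $\ch_{E_1}$ by $\max\{\ch_{E_1},\ch_F\}$ strictly lowers $I$''. But Neumann minimality of $E_1$ (Lemma~\ref{lem:both E1 and E2 are minimizers}) already tells you $I(\ch_{E_1})\le I(\ch_{E_1\cup K})$; you cannot hope to prove the opposite strict inequality from the barrier alone. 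Your justification --- that least-gradientness of $w$ controls the added interior perimeter while the boundary gain on $\{f=-1\}$ outweighs it --- does not hold: the Dirichlet minimality $\|Dw\|(\overline\Om)\le P(B(z,r),\overline\Om)$ gives no direct control on $P(E_1\cup K,\Om)-P(E_1,\Om)$, and the fact that $w^\vee\ge 1$ on $B(z,\varphi(r))$ says nothing about $E_1$ until you know that $K\cap E_1$ is itself a weak Dirichlet solution.

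That last point is exactly the missing idea, and it is where the paper's proof differs structurally from yours. Rather than seeking a strictly better Neumann competitor, the paper sets $E=E_1\cup(B(z,r)\setminus\Om)$ and proves that $K\cap E$ is a weak solution to the Dirichlet problem with data $\ch_{B(z,r)}$. This is done by a \emph{two-sided} contradiction: assuming $K\cap E$ is not a Dirichlet solution, the Dirichlet minimality of $K$ (after splitting $P(K\cap E,\cdot)$ via locality and Lemma~\ref{lem:coincidence of perimeter of E and Omega}) yields
\[
P(K,\Om\cap O_E)<P(E,\Om\cap I_K)+P(\Om,B(z,r)\setminus\partial_E\Om),
\]
while the Neumann minimality $I(\ch_E)\le I(\ch_{E\cup K})$ (again split via locality) yields the \emph{reverse}
\[
P(E,\Om\cap I_K)\le P(K,\Om\cap O_E)-P(\Om,B(z,r)\setminus\partial_E\Om).
\]
These are incompatible. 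Hence $K\cap E$ is a Dirichlet solution, and \emph{only now} does positive mean curvature apply to give $B(z,\varphi(r))\subset K\cap E\subset E$. The interplay of the two minimality conditions, each producing one side of a perimeter inequality on the disjoint regions $\Om\cap I_K$ and $\Om\cap O_E$, is the mechanism you are missing. For the ``moreover'' clause, your appeal to a minimal $E_1$ is unnecessary: apply Proposition~\ref{prop:existence of set minimizers} to get that $\ch_{\{u>t_1\}}-\ch_{\{u<-t_2\}}$ is a solution for a.e.\ $t_1,t_2\in(0,1)$, run the set argument for each such $t_1$, and let $t_1\to 1$.
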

In the above, $r\mapsto\varphi(r)$ is the function associated with positive mean curvature
of $\partial\Om$ as in Definition~\ref{def:positive-curve}.
\begin{proof}
If $z\in\partial\Om$ such that $f=-1$ in a neighborhood of $z$,
we find $r>0$ such that $f=-1$ on $B(z,r)\cap \partial^*\Om$,
and $P(B(z,r),X)<\infty$ and $\mathcal H(\partial B(z,r)\cap \partial\Om)=0$;
the latter two facts hold for $\mathcal L^1$-a.e. $r>0$ by the $\BV$ coarea formula~\eqref{eq:coarea} 
and the fact that $\mathcal H(\partial\Om)<\infty$.
Take $K\subset X$ such that $\ch_K$ is a weak
solution to the Dirichlet problem for least gradients in $\Om$ with boundary data
$\ch_{B(z,r)}$; in particular, $\ch_K=\ch_{B(z,r)}$ on $X\setminus\Om$.
We let $E=E_1\cup (B(z,r)\setminus\Om)$
and claim that $K\cap E$ is another weak solution to the Dirichlet problem. Suppose it is not. Then
\[
P(K,\overline{\Om})< P(K\cap E,\overline{\Om}).
\]
By {\cite[Corollary~4.6]{LMSS}}, we have $T\ch_K(x)=\ch_{B(z,r)}(x)$ for $\mathcal H$-a.e.
$x\in \partial\Om$, and thus $\mathcal H(\partial^*K\cap \partial\Om)=0$, whence
$P(K,\partial\Om)=0$ by \eqref{eq:def of theta}.
Thus
\[
P(K,\overline{\Om})=P(K,\Om).
\]
Now we also have $T\ch_{K\cap E}\le \ch_{B(z,r)}$
$\mathcal H$-a.e.
on $\partial\Om$, and so $\mathcal H(\partial^*(K\cap E)\cap\partial\Om\setminus B(z,r))=0$.
Note that $P(E_1,X)<\infty$ by Theorem \ref{thm:extendability of sets of finite perimeter},
and then $P(K\cap E,X)<\infty$ by \cite[Proposition 4.7]{M}.
Thus by the fact that $P(K\cap E,\cdot)$ is a Borel outer measure and \eqref{eq:def of theta},
\begin{align*}
P(K\cap E&,\overline{\Om}) =P(K\cap E,\Om)+P(K\cap E,\partial\Om)\\
&=P(K\cap E,\Om)+P(K\cap E,\partial\Om\cap B(z,r))\\
&=P(K\cap E,\Om)+P(B(z,r)\setminus (K\cap E),\partial\Om\cap B(z,r))\\
&=P(K\cap E,\Om)+P(\Om,B(z,r)\cap \{T\ch_{K\cap E}=0\})\quad\text{by Lemma \ref{lem:coincidence of perimeter of E and Omega}}\\
&=P(K\cap E,\Om)+P(\Om,B(z,r)\setminus \partial_E \Om),
\end{align*}
since $T\ch_{K}=1$ $\mathcal H$-a.e. on $B(z,r)$. See Definition~\ref{defn:relative-bdry}
for the definition of $\partial_E\Om$.
Combining these,
\begin{equation}\label{eq:first comparison between K and E1}
 P(K,\Om)<P(K\cap E,\Om)+P(\Om, B(z,r)\setminus \partial_E \Om).
\end{equation}
It is straightforward to verify that
\[
\partial^*(K\cap E)\subset (\partial^*K\setminus O_{E})\cup (\partial^*E\cap I_{K}),
\]
where $I_K$ and $O_{E}$ stand for the measure theoretic interior and exterior, respectively, as defined by \eqref{eq:definition of measure theoretic interior} and
\eqref{eq:definition of measure theoretic exterior}.
By \eqref{eq:def of theta} and by $X$ being local, we obtain that
\begin{align*}
P(K\cap E,\Om)
& =\int_{\Om\cap\partial^*(K\cap E)}\theta_{K\cap E}\,d\hcal\\
& \le\int_{\Om\cap\partial^*K\setminus O_E}\theta_{K}\,d\hcal
+\int_{\Om\cap\partial^*E \cap I_K}\theta_{E}\,d\hcal\\
& = P(K,\Om\setminus O_E)+P(E,\Om\cap I_K).
\end{align*}
Combining this with \eqref{eq:first comparison between K and E1}, we get
\begin{equation}\label{eq:contra1}
P(K,\Om\cap O_E)<P(E,\Om\cap I_K)
   +P(\Om, B(z,r)\setminus \partial_E \Om).
\end{equation}
On the other hand, comparing $E$ against $E\cup K$ in the Neumann problem
(note that also $P(E\cup K,X)<\infty$ by \cite[Proposition 4.7]{M}),
by Lemma \ref{lem:both E1 and E2 are minimizers} we obtain
\begin{align}
\notag
P(E,\Om)&+\int_{\partial_{E}\Om} f\,dP(\Om,\cdot)
\le P(E\cup K,\Om)+\int_{\partial_{E\cup K}\Om} f\,dP(\Om,\cdot)\\
\label{eq:Neumann comparison of K and E1}
&= P(E\cup K,\Om)+\int_{\partial_{K}\Om\setminus \partial_{E}\Om} f\,dP(\Om,\cdot)
+\int_{\partial_{E}\Om} f\,dP(\Om,\cdot)\\
\notag
&= P(E\cup K,\Om)+\int_{B(z,r)\setminus \partial_{E}\Om} f\,dP(\Om,\cdot)
+\int_{\partial_{E}\Om} f\,dP(\Om,\cdot),
\end{align}
since we had $T\ch_K(x)=\ch_{B(z,r)}(x)$ for $\mathcal H$-a.e.
$x\in \partial\Om$.
Similarly as before,
it is straightforward to verify that
\[
\partial^*(E\cup K)
\subset (\partial^*E\setminus I_{K})\cup (\partial^*K\cap O_{E}).
\]
By \eqref{eq:def of theta} and the fact that $X$ is local, we now see that
\begin{align}
\notag P(E\cup K,\Om)
&=\int_{\Om\cap\partial^*(E\cup K)}\theta_{E\cup K}\,d\hcal\\
\notag
&\le\int_{\Om\cap \partial^*E \setminus I_K}\theta_{E}\,d\hcal
+\int_{\Om\cap\partial^*K\cap O_E}\theta_{K}\,d\hcal\\
&= P(E,\Om\setminus I_K)+P(K,\Om\cap O_E).
\label{eq:perimeter of K union E1}
\end{align}
Combining \eqref{eq:perimeter of K union E1} with \eqref{eq:Neumann comparison of K and E1} yields that
\begin{align*}
 P(E,\Om)-P(\Om&,\partial_{E}\Om\cap\{f=-1\})+P(\Om,\partial_{E}\Om\cap\{f=1\})\\
&\le P(E,\Om\setminus I_K)+P(K,\Om\cap O_E)
-P(\Om,B(z,r)\setminus \partial_{E}\Om)\\
&\quad+P(\Om,\partial_{E}\Om\cap \{f=1\})
-P(\Om,\partial_{E}\Om\cap \{f=-1\}).
\end{align*}
It follows that
\begin{equation}\label{eq:contra2}
P(E,\Om\cap I_K)
\le P(K,\Om\cap O_E)-P(\Om,B(z,r)\setminus \partial_{E}\Om).
\end{equation}
Since~\eqref{eq:contra1} is in contradiction with~\eqref{eq:contra2},
we have established the claim that $K\cap E$ is a weak solution
to the Dirichlet problem for least gradients in $\Om$ with boundary data $\ch_{B(z,r)}$.
Therefore, by the definition of positive mean curvature,
$B(z,\varphi(r))\subset K\cap E\subset E$
(up to a $\mu$-negligible set)
and in particular, $T\ch_{E_1}(z)=T\ch_{E}(z)=1$.

We complete the proof of this theorem by considering a solution $u$ for boundary data $f$
with $f=-1$ on $B(z,r)\cap\partial^*\Om$.
By the last part of
Proposition~\ref{prop:existence of set minimizers}, we can find two sequences
$t_{1,k},t_{2,k}\in (0,1)$
with $\lim_{k\to\infty}t_{1,k}=1$ and $\lim_{k\to\infty}t_{2,k}=1$ such that each
$\ch_{\{u>t_{1,k}\}}-\ch_{\{u<-t_{2,k}\}}$ is a solution to the same Neumann problem. Thus, by the above
argument, we have that $u\ge t_{1,k}$ in $B(z,\varphi(r))\cap\Om$ for each $k\in\N$, and thus the desired
conclusion follows by letting $k\to\infty$.
\end{proof}
In particular, it follows from the above result that \emph{every}
$z$ in the interior of the set $\{x\in\partial\Om :\, f(x)=-1\}$ satisfies $z\in\partial_{E_1}\Om$. Conversely,
$z\not\in\partial_{E_1}\Om$ whenever $z$ lies in the 
interior of the set $\{x\in\partial\Om :\, f(x)=1\}$.
\begin{remark}
Note that if $f=\ch_{F_1}-\ch_{F_2}$ with $F_1,F_2\subset\dOm$ disjoint, the above theorem gives
us good control over the solutions to the restricted Neumann problem with boundary data $f$ when
both $F_1$ and $F_2$ are relatively open subsets of $\partial\Om$. However, if $F_1$ and $F_2$ have
empty interior, the above theorem gives us no control over the solutions near the boundary.

Compare
this to the situation regarding the Dirichlet problem on domains whose boundary has positive mean curvature.
It is known that if the Dirichlet boundary data are continuous, then the solution to the
least gradient problem
on the domain has trace on the boundary that agrees with the boundary data,
see \cite{LMSS}. However, if the boundary data are not continuous, no such control over the trace of the 
solution is known except in special circumstances such as characteristic functions of relatively open
subsets $F\subset\partial\Om$ for which $\mathcal{H}(\partial\Om\cap\partial F)=0$. 
Indeed, in the Euclidean setting, with a Euclidean ball playing the role of the domain,
there are known to be boundary data, taken from the class $L^1$ of the boundary sphere, for which solutions
to the Dirichlet problem fail to have the correct trace, see~\cite{MRS}. 
\end{remark}
A natural question is whether we have any control over the solution near the part of the boundary where $f=0$. 
\begin{example}
Consider the simple example of $\Om=B(0,1) \subset \R^2$ (unweighted) with the boundary data
\[
  f(x,y) \coloneq \begin{cases}
     \sgn x & \text{for }(x,y)\in\dOm \text{ with } |x|\ge \frac12, \\
     0 & \text{otherwise.}
    \end{cases}
\]
We can easily see that it is impossible to determine what value
a solution $u$ will have near the boundary points 
where $f=0$. Indeed, the problem is solved by each of the following three functions:
\begin{align*}
  u_1(x,y) &= \ch_{(-1, 1/2)}(x) - \ch_{(1/2, 1)}(x) , \quad (x,y)\in\Om,  \\
  u_2(x,y) &= \ch_{(-1, -1/2)}(x) - \ch_{(-1/2, 1)}(x), \quad (x,y)\in\Om, \quad\text{and}\\
  u_3(x,y) &= \ch_{(-1, -1/2)}(x) - \ch_{(1/2, 1)}(x), \quad (x,y)\in\Om.
\end{align*}
Then, $Tu_1 \equiv 1$, $Tu_2 \equiv -1$, and $Tu_3 \equiv 0$ on the set $\{f=0\}$.
\end{example}
One might therefore wonder whether the zero Neumann data in a neighborhood of a boundary point 
guarantee that the solution is constant in a neighborhood of this point. In the following example, 
where a disk in the unweighted plane is discussed, we will see that such a conclusion indeed holds 
true. However, the subsequent two examples will prove the unweighted planar domain to be highly misleading.
\begin{example}\label{eq:disk-unweight}
Let $\Om = B(0,1)\subset \R^2$ (unweighted) 
and let $f\colon \dOm \to \{0, \pm1\}$. Let
$u = \ch_{E_1} - \ch_{\Om \setminus E_1} \in \BV(\Om)$ 
be a solution to the restricted Neumann problem with boundary data $f$.
We will now  show that if $z_0 \in \dOm$ lies in the interior of the set $\{f=0\}$,
then there is $r > 0$ such that $u$ is constant in $B(z_0, r)\cap\Om$.

Suppose for the sake of contradiction that $u$ is not constant on $B(z_0, r)$ for any $r>0$.
Fix $R>0$ such that $f(z)=0$ for all $z\in B(z_0, R)\cap\dOm$.
Since $\ch_{E_1}$ is a function of least gradient by Proposition \ref{prop:E1-leastGrad},
we can assume that $\partial E_1\cap\Om$ 
consists of straight line segments that connect points in $\partial\Om$
and do not intersect each other.
Consider the two closed half-disks whose union is $\overline\Om$ and whose intersection contains $z_0$. Take all the line 
segments of $\partial E_1$ that reach $B(z_0,R)\cap\partial\Om$
and lie within one of these half-disks. Then, move their 
end-points that lie within $B(z_0,R)\cap\dOm$ 
to $\partial B(z_0, R)\cap\dOm$ within the respective half-disk. Such a modification of $E_1$ will 
decrease the perimeter of $E_1$ inside $\Omega$ but the boundary integral will remain unchanged (since $f=0$ at all 
points where the trace of $\ch_{E_1}$ changed). In other words, such a modification will decrease the value of the 
functional $I(\cdot)$ and hence $u$ could not have been a solution.
\begin{center}
  \includegraphics[width=10cm,height=42mm,page=7]{NeumannP=1-Examples.pdf}

\vspace\topsep
\stepcounter{figure}
Figure \thefigure: The perimeter of $E_1$ inside $\Om$ is decreased by moving the endpoints of $\partial E_1$ from $\dOm \cap B(z_0, R)$ to $\dOm \cap \partial B(z_0, R)$.
\end{center}
\end{example}
Let us now consider a domain in 3-dimensional Euclidean space, where the situation turns out to 
be very different from the plane.
\begin{example}
Let $\Om = B(0,1) \subset \R^3$ (unweighted) and let
\[
  f(x,y,z) = \begin{cases}
  \sgn x & \text{when } |y|>\frac{1}{100};\\
  0 & \text{otherwise}.
  \end{cases}
  \hskip6cm  
\]
\end{example}
\begin{wrapfigure}{r}{60mm}
  \vspace{-64pt}
  \centering
  \includegraphics[width=57.4mm,height=54.1mm,page=8]{NeumannP=1-Examples.pdf}
  \vspace{-8pt}
\end{wrapfigure}
Based on Theorem~\ref{thm:poscurv:bdry-data-agreement}, the trace of a solution to the restricted minimization problem 
$u = \ch_{E_1} - \ch_{\Om \setminus E_1}$ necessarily attains the values of $-f$ in the region where $f\neq0$. Therefore, 
the set $E_1$ has to cover the surface of a unit half-ball with $x<0$, perhaps apart from the thin slit $|y|<\frac{1}{100}$. 
However, if $E_1$ consisted of at least 
two connected components, one for each component of the set $\{f=-1\}$, then the 
perimeter of $E_1$ inside $\Om$ would be greater than the perimeter of the half-ball $B(0,1) \cap \{x<0\}$, which equals 
the area of a unit disk $\{(0,y,z)\in\Om\}$. 
Hence, $E_1$ consists of a single connected component.

Then, $\partial E_1$ connects the two half-circles
on $\dOm$ with $x<0$ and $y = \frac{\pm 1}{100}$.
If the set $\widetilde{E}_1 \coloneq \{(x,y,z) \in \partial E_1:\, x<0, |y|<\frac{1}{100}\}$
lies entirely inside $\Om$, then the perimeter of this
portion of $\partial E_1$ can be bounded below by a 
half of the surface area of a cylinder of height
$\frac{2}{100}$ and radius $\bigl(1-(\frac{1}{100})^2\bigr){}^{1/2}$. Thus, the perimeter of $E_1$ inside $\Om$ will 
decrease if a sufficiently large part of $\widetilde{E}_1$ lies on~$\dOm$.
Therefore, the jump set of the trace of the solution $u$ has a nonempty intersection with the 
interior of the set $\{f=0\}$ and so the solution is nonconstant near the said intersection.

\vspace{\topsep} 
Next we show that the case of $\R^2$ equipped with an Ahlfors $2$-regular measure also
differs from the unweighted plane.
\begin{example}
\label{exa:weighted-disk}
Consider $X=\R^2$ endowed with the Euclidean distance and weighted Lebesgue measure $d\mu(z) \coloneq w(z)\,dz$, where
\[
  w(z) = \begin{cases}
    \frac12& \text{for } z \in [-\frac1{10}, \frac{1}{10}] \times [-\frac9{10}, \frac{9}{10}], \\
    1 & \text{otherwise.}
    \end{cases}
\]
Let $\Om = B(0,1)$ and define $f(x,y) = \sgn x$ for $(x,y)\in\partial\Om$ if $|x|> 1/\sqrt2$
and $f(x,y)=0$ otherwise. 
Considering $v(x,y) = -\sgn x$, $(x,y) \in \Om$, we obtain that
\[
  \inf_u I(u) \le I(v) = 2 \bigl(P(B_+(0,1), \Om) - P(\Om, \{f=1\})\bigr)
  = 2 \biggl(\frac{11}{10} - \frac{\pi}{2} \biggr) < 0,
\]
where $B_+(0,1)$ denotes the right half-disk $\{(x,y) \in B(0,1):\, x>0\}$. 
Observe also that the function $v$ is not actually a solution.

Let us now consider only candidates for solutions that are of least gradient in $\Om$ and
of the form $w = \ch_{E_1} - \ch_{E_2}$ 
such that the jump set of $w$ does not reach to the interior of the set $\{f=0\}$.
It is easy to verify for all $\alpha,\beta \in [-\frac{\pi}{4}, \frac{\pi}{4}]$ (and similarly for all 
$\alpha,\beta \in [\frac{3\pi}{4}, \frac{5\pi}{4}]$) that the
path of least weighted length that connects the boundary point 
$(\cos \alpha, \sin \alpha)$ with $(\cos \beta, \sin \beta)$ is a straight line segment. Thus, letting 
$w_0(x,y) = \ch_{(-1,-1/\sqrt2)}(x) - \ch_{(1/\sqrt2, 1)}(x)$ for $(x,y)\in\Om$, we have
$I(w_0) \le I(w)$, while
\[
  I(w_0) = 2 \biggl( P\Bigl(\Bigl\{(x,y)\in \Om:\, x>\frac{1}{\sqrt2}\Bigr\}, \Om\Bigr) - P\bigl(\Om,\{f=1\}\bigr) \biggr) 
  = 2 \biggl(\sqrt2 - \frac{\pi}{2} \biggr).
\]
In particular, $I(w) > I(v)$.

Thus, the jump set of a solution $u=\ch_{E_1} - \ch_{E_2}$
does reach to the interior of the set $\{f=0\}$, i.e., there is 
$z_0 \in \dOm$ and $r_0>0$ such that $f \equiv 0$ in $\dOm \cap B(z_0, r_0)$, but $u$ is not constant in $B(z_0, r)$ for any $r<r_0$.
It can be verified that 
\begin{align*}
E_1 & = \{(x,y)\in \Om:\, x < -\max\{0.1, |y|/9\}, \\
 E_2 & = \{(x,y)\in \Om:\, x > \max\{0.1, |y|/9\}.
\end{align*}
\end{example}
\section{Minimal solutions and their uniqueness}
\label{sec:minimal}
In this section, we assume that $\Omega\subset X$ is a nonempty bounded open set with
$P(\Om,X)<\infty$, such that for any $u\in \BV(\Omega)$, the trace $Tu(x)$
exists for $\hcal$-a.e.\@ $x\in\partial^*\Omega$.
We also assume that the boundary data $f\in L^1(\partial^*\Omega,P(\Om,\cdot))$
satisfies \eqref{eq:f integrates to zero}.

We saw in Example \ref{ex:nonuniqueness of solution} that solutions to the restricted Neumann
problem need not be unique. However, we will see in this section that \emph{minimal
solutions} exist and are unique.
\begin{lemma}\label{lem:strong subadditivity of I}
Let $E,K\subset \Om$ be of finite perimeter in $\Om$. Then
\[
I(\ch_{E\cap K})+I(\ch_{E\cup K})\le I(\ch_{E})+I(\ch_{K}).
\]
\end{lemma}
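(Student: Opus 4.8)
The plan is to prove the strong subadditivity (submodularity) of the functional $I$ by handling the interior perimeter term and the boundary trace term separately. For the interior term $\|D\ch_{(\cdot)}\|(\Om) = P(\cdot,\Om)$, I would invoke the well-known strong subadditivity of perimeter: for sets of finite perimeter $E,K\subset\Om$,
\[
P(E\cap K,\Om)+P(E\cup K,\Om)\le P(E,\Om)+P(K,\Om).
\]
In the metric setting this follows from the coarea formula together with the corresponding inequality for $\ch$-type approximations, or can be quoted from the BV theory of \cite{M}; alternatively it follows by applying the definition of total variation to locally Lipschitz approximations $u_i\to\ch_E$ and $v_i\to\ch_K$ in $L^1_\loc$, using that $\min\{u_i,v_i\}\to\ch_{E\cap K}$ and $\max\{u_i,v_i\}\to\ch_{E\cup K}$, and that for locally Lipschitz functions $g_{\min\{u_i,v_i\}}+g_{\max\{u_i,v_i\}}\le g_{u_i}+g_{v_i}$ a.e.

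For the boundary term, I would use that traces of characteristic functions take only the values $0$ and $1$ ($P(\Om,\cdot)$-a.e.\ on $\partial^*\Om$), and that the trace respects the lattice operations in the following sense:
\[
T\ch_{E\cap K}(x)=\min\{T\ch_E(x),T\ch_K(x)\},\qquad
T\ch_{E\cup K}(x)=\max\{T\ch_E(x),T\ch_K(x)\}
\]
for $P(\Om,\cdot)$-a.e.\ $x\in\partial^*\Om$. This is elementary from the definition of trace together with the measure-density estimates: at a point where, say, $T\ch_E(x)=1$ and $T\ch_K(x)=1$, the averages of $\ch_E$ and $\ch_K$ over $B(x,r)\cap\Om$ tend to $1$, hence so does the average of $\ch_{E\cap K}$ (using $\ch_{E\cap K}\ge \ch_E+\ch_K-1$), giving $T\ch_{E\cap K}(x)=1$; the remaining cases are similar, and $T\ch_{E\cup K}$ is handled via $T\ch_{E\cup K}=1-T\ch_{(\Om\setminus E)\cap(\Om\setminus K)}$ together with the complement rule $T\ch_{\Om\setminus F}=1-T\ch_F$. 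Since $\{0,1\}$-valued numbers satisfy the identity $\min\{a,b\}+\max\{a,b\}=a+b$ exactly, we get pointwise $P(\Om,\cdot)$-a.e.\
\[
T\ch_{E\cap K}\,f+T\ch_{E\cup K}\,f=T\ch_E\,f+T\ch_K\,f,
\]
so the boundary integrals add up with equality. Integrating and adding the perimeter inequality then yields $I(\ch_{E\cap K})+I(\ch_{E\cup K})\le I(\ch_E)+I(\ch_K)$.

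One point that needs care: the functional $I$ is defined only for $u\in\BV(\Om)$, so I should first note that $E\cap K$ and $E\cup K$ have finite perimeter in $\Om$ whenever $E,K$ do — this is standard (e.g.\ \cite[Proposition 4.7]{M}, already cited in the excerpt) — and that their traces exist $\hcal$-a.e.\ on $\partial^*\Om$ under the standing assumptions of the section. The main obstacle, and the only genuinely nontrivial verification, is the trace lattice identity above; everything else is either a direct quotation or a pointwise algebraic identity for $\{0,1\}$-valued functions. I would present that verification carefully using the averaging definition of the trace and the elementary bounds $\ch_{E\cap K}\ge\ch_E+\ch_K-1$ and $\ch_{E\cup K}\le\ch_E+\ch_K$, which control the relevant averages over $B(x,r)\cap\Om$ from below and above respectively.
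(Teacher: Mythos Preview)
Your proof is correct and follows essentially the same strategy as the paper: invoke the strong subadditivity of perimeter from \cite[Proposition~4.7]{M} for the interior term, and show that the boundary terms coincide exactly. The paper obtains the trace identity more directly, via linearity of the trace operator together with the pointwise identity $\ch_{E\cap K}+\ch_{E\cup K}=\ch_E+\ch_K$, which bypasses your case-by-case lattice verification.
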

\begin{proof}
We have $P(E \cap K, \Om) + P(E \cup K, \Om) \le P(E, \Om) + P(K, \Om)$
by \cite[Proposition 4.7]{M}.
Then by linearity of traces, $\mathcal H$-a.e. on $\partial\Om$ we have
\[
  T\ch_{E \cap K} + T\ch_{E \cup K} = T(\ch_{E \cap K} + \ch_{E \cup K})
  = T(\ch_{E} + \ch_{K}) = T\ch_{E} + T\ch_{K}.
\]
The claim follows.
\end{proof}
\begin{definition}
A solution $u = \ch_{E_1} - \ch_{E_2}$ to the restricted Neumann problem
is said to be \emph{minimal} if whenever $\widetilde{E}_1,\widetilde{E}_2\subset \Om$
are disjoint sets such that $v = \ch_{\widetilde{E}_1} - \ch_{\widetilde{E}_2}$ is a solution, 
it follows that
$\mu(E_1\setminus \widetilde{E}_1)=0$
and $\mu(E_2\setminus \widetilde{E}_2)=0$.
\end{definition}
By Lemma \ref{lem:I(E)=I(-E)}, it is enough to compare with solutions of the form
$\ch_{\widetilde{E}} - \ch_{\Om\setminus \widetilde{E}}$.
\begin{lemma}
\label{lem:intersection-minimizes}
Suppose that $u_a = \ch_{E_a} - \ch_{\Om \setminus E_a}$ and $u_b= \ch_{E_b} - \ch_{\Om\setminus E_b}$ are both 
solutions to the restricted Neumann problem. Then, so are
\[
  u \coloneq \ch_{E_a \cap E_b} - \ch_{\Om \setminus (E_a \cap E_b)}\quad\text{and}\quad 
  v \coloneq \ch_{E_a \cup E_b} - \ch_{\Om \setminus (E_a \cup E_b)}.
\]
\end{lemma}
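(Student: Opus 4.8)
The plan is to combine the strong subadditivity from Lemma~\ref{lem:strong subadditivity of I} with Lemma~\ref{lem:both E1 and E2 are minimizers}, which reduces the whole question to showing that the sets $E_a\cap E_b$ and $E_a\cup E_b$ still minimize $I(\ch_{(\cdot)})$ among all $\mu$-measurable subsets of $\Om$, and likewise that their complements minimize $I(-\ch_{(\cdot)})$. Since $u_a$ and $u_b$ are solutions, Lemma~\ref{lem:both E1 and E2 are minimizers} tells us that $I(\ch_{E_a})=I(\ch_{E_b})=\min_{F\subset\Om}I(\ch_F)\eqcolon m$, and similarly $I(-\ch_{\Om\setminus E_a})=I(-\ch_{\Om\setminus E_b})=\min_{F\subset\Om}I(-\ch_F)$. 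By Lemma~\ref{lem:I(E)=I(-E)}, $I(-\ch_{\Om\setminus E_a})=I(\ch_{E_a})=m$, so both minima coincide and equal $m$.

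First I would apply Lemma~\ref{lem:strong subadditivity of I} with $E=E_a$ and $K=E_b$ to get
\[
I(\ch_{E_a\cap E_b})+I(\ch_{E_a\cup E_b})\le I(\ch_{E_a})+I(\ch_{E_b})=2m.
\]
Since $m$ is the minimum of $I(\ch_{(\cdot)})$ over subsets of $\Om$, we have $I(\ch_{E_a\cap E_b})\ge m$ and $I(\ch_{E_a\cup E_b})\ge m$. Combining these forces $I(\ch_{E_a\cap E_b})=I(\ch_{E_a\cup E_b})=m$; i.e., both $E_a\cap E_b$ and $E_a\cup E_b$ minimize $I(\ch_{(\cdot)})$ among all $\mu$-measurable $F\subset\Om$. (We should note $P(E_a\cap E_b,\Om)$ and $P(E_a\cup E_b,\Om)$ are finite by \cite[Proposition 4.7]{M}, so these sets are legitimate competitors.)

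Next I would run the analogous argument for the complements. Note $\Om\setminus(E_a\cap E_b)=(\Om\setminus E_a)\cup(\Om\setminus E_b)$ and $\Om\setminus(E_a\cup E_b)=(\Om\setminus E_a)\cap(\Om\setminus E_b)$. Writing $I(-\ch_F)=I_{-f}(\ch_F)$ as in the proof of Lemma~\ref{lem:lower semicontinuity of I}, Lemma~\ref{lem:strong subadditivity of I} applied with boundary data $-f$ to the sets $\Om\setminus E_a$ and $\Om\setminus E_b$ yields
\[
I(-\ch_{\Om\setminus(E_a\cup E_b)})+I(-\ch_{\Om\setminus(E_a\cap E_b)})\le I(-\ch_{\Om\setminus E_a})+I(-\ch_{\Om\setminus E_b})=2m,
\]
and again minimality forces equality, so $\Om\setminus(E_a\cap E_b)$ and $\Om\setminus(E_a\cup E_b)$ minimize $I(-\ch_{(\cdot)})$. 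By Lemma~\ref{lem:both E1 and E2 are minimizers}, it follows that $u=\ch_{E_a\cap E_b}-\ch_{\Om\setminus(E_a\cap E_b)}$ and $v=\ch_{E_a\cup E_b}-\ch_{\Om\setminus(E_a\cup E_b)}$ are both solutions, as claimed. I do not anticipate a serious obstacle here: the only point requiring a little care is confirming that the relevant intersections and unions still have finite perimeter in $\Om$ (so that Lemmas~\ref{lem:strong subadditivity of I} and~\ref{lem:both E1 and E2 are minimizers} apply), but that is immediate from \cite[Proposition 4.7]{M}; everything else is a short chain of inequalities pinched between the subadditivity bound and the minimality bound.
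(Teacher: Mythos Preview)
Your proof is correct and follows essentially the same approach as the paper: both arguments hinge on the strong subadditivity inequality from Lemma~\ref{lem:strong subadditivity of I} pinched against the minimality of $I(\ch_{E_a})$ and $I(\ch_{E_b})$. The paper's version is marginally shorter because it uses Lemma~\ref{lem:I(E)=I(-E)} to write $I(u)=2I(\ch_{E_a\cap E_b})$ directly, thereby avoiding your second application of subadditivity to the complements (which is correct but redundant, since $I(-\ch_{\Om\setminus(E_a\cap E_b)})=I(\ch_{E_a\cap E_b})$ already follows from Lemma~\ref{lem:I(E)=I(-E)}).
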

\begin{proof}
By Lemma \ref{lem:strong subadditivity of I} we know that
$I(\ch_{E_a \cap E_b}) + I(\ch_{E_a \cup E_b}) \le I(\ch_{E_a}) + I(\ch_{E_b})$.
By Lemma~\ref{lem:I(E)=I(-E)} we obtain that
$I(u) = 2 I(\ch_{E_a \cap E_b})$, $I(v) = 2 I(\ch_{E_a \cup E_b})$,
and analogously for $I(u_a)$ and $I(u_b)$ as well.
Then,
\begin{equation}
  \label{eq:Icapcup-est}
  \frac{I(u) + I(v)}{2} = I(\ch_{E_a \cap E_b}) + I(\ch_{E_a \cup E_b}) \le I(\ch_{E_a}) + I(\ch_{E_b}) = \frac{I(u_a) + I(u_b)}{2}\,.
\end{equation}
As $u_a$ and $u_b$ are solutions, we can estimate $I(u_a)=I(u_b) \le I(u)$ and
$I(u_a)\le I(v)$, which together with \eqref{eq:Icapcup-est} yields that
$I(u) = I(v) = I(u_a)$ and hence both $u$ and $v$ are solutions.
\end{proof}
\begin{theorem}
\label{thm:sol-with-minmeasure}
Assume that $\Om$ satisfies the exterior measure density condition
\eqref{eq:exterior measure density condition}, that $\partial\Om$ is codimension
$1$ Ahlfors regular as given in \eqref{eq:boundary codim Ahlfors regularity},
and that $-1\le f\le 1$.
Then there exists a unique (up to sets of $\mu$-measure zero)
minimal solution to the restricted Neumann problem.
\end{theorem}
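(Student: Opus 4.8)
The plan is to recast solutions of the form $\ch_E-\ch_{\Om\setminus E}$ as minimization over a class of sets, show this class is a lattice stable under suitable $L^1$-limits, extract a smallest and a largest member, and build the minimal solution from these. By Theorem~\ref{thm:existence of solutions} — whose hypotheses are exactly those assumed here together with the standing assumptions of this section — the restricted Neumann problem admits a solution, and by Proposition~\ref{prop:existence of set minimizers} and Lemma~\ref{lem:I(E)=I(-E)} one of the form $\ch_E-\ch_{\Om\setminus E}$. Combining Lemma~\ref{lem:both E1 and E2 are minimizers} with Lemma~\ref{lem:I(E)=I(-E)} (which via~\eqref{eq:f integrates to zero} gives $I(-\ch_F)=I(\ch_{\Om\setminus F})$), a pair of disjoint sets $E_1,E_2\subset\Om$ yields a solution $\ch_{E_1}-\ch_{E_2}$ if and only if $E_1\in\mathcal A$ and $\Om\setminus E_2\in\mathcal A$, where
\[
\mathcal A:=\bigl\{F\subset\Om:\,P(F,\Om)<\infty,\ I(\ch_F)=\inf_G I(\ch_G)\bigr\}.
\]
Since $I(\ch_\emptyset)=I(0)=0$ we have $\inf_G I(\ch_G)\le0$, so every $F\in\mathcal A$ satisfies $P(F,\Om)\le P(\Om,X)<\infty$ (using $-1\le f\le1$), and $\mu(\Om)<\infty$ because $X$ is proper.

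The structural core is that $\mathcal A$ is a lattice closed under the relevant limits. Lemma~\ref{lem:intersection-minimizes} shows $\mathcal A$ is closed under finite intersections and finite unions. To produce a smallest member, set $m:=\inf_{F\in\mathcal A}\mu(F)$, choose $E_k\in\mathcal A$ with $\mu(E_k)\to m$, and replace them by the decreasing sequence $F_n:=\bigcap_{k=1}^n E_k\in\mathcal A$; then $m\le\mu(F_n)\le\mu(E_n)\to m$. Letting $E_*:=\bigcap_n F_n$, continuity of $\mu$ from above gives $\mu(E_*)=m$, while $\ch_{F_n}\to\ch_{E_*}$ in $L^1(\Om)$ by dominated convergence, so Lemma~\ref{lem:lower semicontinuity of I} yields $I(\ch_{E_*})\le\liminf_n I(\ch_{F_n})=\inf_G I(\ch_G)$, i.e.\ $E_*\in\mathcal A$. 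For any $A\in\mathcal A$ we then have $E_*\cap A\in\mathcal A$, hence $m\le\mu(E_*\cap A)\le\mu(E_*)=m$, and thus $\mu(E_*\setminus A)=0$; so $E_*$ is contained, up to a $\mu$-null set, in every member of $\mathcal A$. Running the mirror-image argument with unions and $M:=\sup_{F\in\mathcal A}\mu(F)\le\mu(\Om)$ produces $E^*\in\mathcal A$ with $\mu(E^*)=M$ that contains every member of $\mathcal A$ up to a $\mu$-null set.

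It remains to assemble the minimal solution and verify uniqueness. Put $E_1^{\min}:=E_*$ and $E_2^{\min}:=\Om\setminus E^*$; since $E_*\subset E^*$ up to a $\mu$-null set, these are essentially disjoint, and discarding that null set we get genuinely disjoint sets with $E_1^{\min}\in\mathcal A$ and $\Om\setminus E_2^{\min}=E^*\in\mathcal A$, so $u^{\min}:=\ch_{E_1^{\min}}-\ch_{E_2^{\min}}$ is a solution. If $v=\ch_{\widetilde E_1}-\ch_{\widetilde E_2}$ is any solution with $\widetilde E_1,\widetilde E_2$ disjoint, then $\widetilde E_1\in\mathcal A$ forces $\mu(E_1^{\min}\setminus\widetilde E_1)=0$, and $\Om\setminus\widetilde E_2\in\mathcal A$ forces $\mu(E_2^{\min}\setminus\widetilde E_2)=\mu((\Om\setminus\widetilde E_2)\setminus E^*)=0$; hence $u^{\min}$ is minimal. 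If $w=\ch_{F_1}-\ch_{F_2}$ is another minimal solution, applying minimality of $u^{\min}$ against $w$ and of $w$ against $u^{\min}$ gives $\mu(E_i^{\min}\symdiff F_i)=0$ for $i=1,2$, which is the asserted uniqueness.

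The main obstacle is that $\mathcal A$ may be uncountable, so one cannot literally intersect all of its members; the argument must instead route through the measure infimum (resp.\ supremum) together with the lower semicontinuity of $I$ in Lemma~\ref{lem:lower semicontinuity of I} — which is precisely why the exterior measure density condition and codimension $1$ Ahlfors regularity of $\partial\Om$ are imposed in this theorem — and the lattice property of Lemma~\ref{lem:intersection-minimizes}. Once $E_*$ and $E^*$ are in hand, disjointness of the two pieces of $u^{\min}$ follows immediately from $E_*\subset E^*$.
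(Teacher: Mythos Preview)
Your proof is correct and follows essentially the same route as the paper's: both take a measure-minimizing sequence of solution sets, use Lemma~\ref{lem:intersection-minimizes} to replace it by a monotone one, pass to the limit via Lemma~\ref{lem:lower semicontinuity of I}, and then argue uniqueness by intersecting with an arbitrary solution set. Your packaging via the class $\mathcal A$ and the explicit construction of both the smallest element $E_*$ and the largest element $E^*$ is a mild notational reorganization of what the paper does (the paper's $E_a$ is your $E_*$ and its $E_b$ is your $\Om\setminus E^*$), but the substance is identical.
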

\begin{proof}
	By Theorem \ref{thm:existence of solutions} we know that a solution exists.
	Let $\beta = \inf_E \mu(E)$, where the infimum is taken over all sets $E$ such that
	$u = \ch_E - \ch_{\Om\setminus E}$ is a solution.
	By Proposition \ref{prop:existence of set minimizers} and the fact that $\Om$ is bounded,
	$\beta<\infty$.
Let $\{E_k\}_{k=1}^\infty$ be a sequence of subsets of $\Om$ such that
$u_k = \ch_{E_k} - \ch_{\Om \setminus E_k}$ are solutions and
$\mu(E_k) \to \beta$. Let $\widetilde{E}_k = \bigcap_{j=1}^k E_j$. Then, all functions
\[
  v_k \coloneq \min_{1\le j \le k} u_j= \ch_{\widetilde{E}_k} - 
  \ch_{\Om \setminus \widetilde{E}_k}, \quad k=1,2,\ldots
\]
are also solutions by Lemma~\ref{lem:intersection-minimizes}.

Let $E_a = \bigcap_{k=1}^\infty E_k$.
Then $v_k \to \ch_{E_a} - \ch_{\Om \setminus E_a}$ in $L^1(\Om)$ and
$\mu(E_a) = \beta$.
By Lemma \ref{lem:lower semicontinuity of I} we obtain that
$I(\ch_{E_a} - \ch_{\Om \setminus E_a}) \le \liminf_{k\to\infty} I(v_k)$,
and so $\ch_{E_a} - \ch_{\Om \setminus E_a}$ is also a solution.

Now if $E\subset \Om$ is such that $\ch_E-\ch_{\Om\setminus E}$
is a solution, by Lemma \ref{lem:intersection-minimizes}
we know that $\ch_{E_a\cap E}-\ch_{\Om\setminus (E_a\cap E)}$ is also a solution,
and so $\mu(E_a\cap E)\ge \beta$.
Since $\mu(E_a)=\beta$, necessarily $\mu(E_a\setminus E)=0$.
By the same argument, we obtain that $E_a$ is the unique set with these properties,
up to sets of $\mu$-measure zero.

By an entirely analogous argument, we find a unique (up to sets of $\mu$-measure zero)
set $E_b\subset \Om$ such that
$\ch_{\Om \setminus E_b} - \ch_{E_b}$ is a solution, and whenever
$\ch_{\Om\setminus E}-\ch_E$ is another solution, then
$\mu(E_b\setminus E)=0$. By Lemma \ref{lem:both E1 and E2 are minimizers},
$\ch_{E_a}-\ch_{E_b}$ is the desired unique minimal solution.
\end{proof}
\section{Stability}
\label{sec:stability}
In this section, we always assume that $\Omega\subset X$ is a nonempty bounded open set with
$P(\Om,X)<\infty$, such that for any $u\in \BV(\Omega)$, the trace $Tu(x)$
exists for $\hcal$-a.e.\@ $x\in\partial^*\Omega$.

The goal here is to investigate stability of solutions to the restricted Neumann problem.
By stability we mean that
if a sequence of Neumann boundary data converges in $L^1(\partial^*\Om)$ to a function, then
the corresponding sequence of solutions converges
(perhaps up to a subsequence) to a solution to the Neumann problem with the 
limit boundary data. Stability properties give us a method by which we can, by hand, construct a solution to the Neumann
problem for complicated boundary data by using solutions to simpler boundary data.
\begin{example}
\label{exa:stability-issues}
Let $\Om = B(0,1) \subset \R^2\cong \C$ (unweighted). For each $k\in\N$, let
$\theta_k = \frac{\pi}{3} + \frac{(-1)^k}{k}$
and consider the sequence of boundary data functions
\[
  f_k(e^{i\theta}) \coloneq
  \begin{cases}
    1&\text{when } \theta \in (0, \theta_k] \cup [\pi-\theta_k, \pi), \\
    -1 & \text{when } \theta \in [-\theta_k, 0) \cup (\pi, \pi+\theta_k], \\
    0 & \text{otherwise}.
  \end{cases}
\]
It is easy to see that there are two types of minimal  solutions based on the 
value of $\theta_k$. If $\theta_k \in[\frac{\pi}{3}, \frac{\pi}{2}]$, then a solution can be expressed as $u(x,y) = -\sgn (y)$, 
which is also minimal in case $\theta_k > \frac{\pi}{3}$. However, if $\theta_k \in(0, \frac{\pi}{3}]$, then the minimal solution 
$u_k = \ch_{E_1^k} - \ch_{E_2^k}$ is determined by four disk segments whose arcs cover the connected components of 
$\{f_k \neq 0\}$, i.e.,
\begin{equation}
\label{eq:stability-ctrexample}
\begin{aligned}
  E_1^k & = \{(x,y)\in\Om:\, (1-\cos \theta_k) y \le  (|x|-1)\sin \theta_k\}, \\
  E_2^k & = \{(x,y)\in\Om:\, (1-\cos \theta_k) y \ge  (1-|x|)\sin \theta_k\}.
\end{aligned}
\end{equation}

\noindent
\begin{minipage}[t][][t]{0.495\linewidth}
\center
\includegraphics[width=50mm,height=51mm,page=3]{NeumannP=1-Examples.pdf}

The minimal solution $u_k$ if $\theta_k> \frac\pi3$

(also a solution if $\theta_k=\frac\pi3$)
\end{minipage}
\begin{minipage}[t][][t]{0.495\linewidth}
\center
\includegraphics[width=50mm,height=51mm,page=4]{NeumannP=1-Examples.pdf}

The minimal solution $u_k$ if $\theta_k\le \frac\pi3$
\end{minipage}

\vspace{1.5\topsep}
Thus, $u_{2k} = u$ for all $k=1,2,\ldots$, and trivially $u_{2k} \to u$ as $k\to \infty$. On the other hand 
$u_{2k+1} \to u_\infty = \ch_{E_1^\infty} - \ch_{E_2^\infty} \neq u$, where $E_1^\infty$ and $E_2^\infty$ are the 
sets as in~\eqref{eq:stability-ctrexample} for $\theta_\infty = \frac{\pi}{3}$.
Consequently, the sequence of solutions $\{u_k\}_{k=1}^\infty$ does not have any limit even though the sequence of 
boundary data functions converges in $L^1(\partial^*\Om,P(\Om,\cdot))$.
\end{example}
Note however that both functions $u$ and $u_{\infty}$ are solutions to the
restricted Neumann problem with boundary data given by 
$f=\lim_k f_k$. This observation suggests that a weaker notion of stability might apply here. Indeed, Theorem~\ref{thm:stable1} below will 
show that stability can be recovered if we allow for passing to a subsequence of the sequence of solutions.

In this section, we use the abbreviation $L^1(\partial^*\Om)\coloneq L^1(\partial^*\Om,P(\Om,\cdot))$.
\begin{lemma}\label{lem:compare-two-bdry-data}
If $u$ is a solution to the restricted Neumann problem with $L^1(\partial^*\Om)$-boundary data
$f$ and $v$ is a solution with $L^1(\partial^*\Om)$-boundary data $h$, then 
\[
|I_{f}(u)-I_{h}(v)|\le \| f-h\|_{L^1(\partial^*\Om)}.
\]
\end{lemma}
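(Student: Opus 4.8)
The key observation is that $u$ and $v$ are competitors for \emph{both} minimization problems, since they both satisfy the pointwise constraint $-1\le \cdot\le 1$. First I would record the elementary fact that this constraint passes to the trace: if $-1\le w\le 1$ $\mu$-a.e.\ in $\Om$, then $-1\le Tw(x)\le 1$ at every $x\in\partial^*\Om$ where the trace exists. Indeed, if $Tw(x)>1$, then $|w-Tw(x)|\ge Tw(x)-1>0$ holds $\mu$-a.e.\ on $B(x,r)\cap\Om$ because $w\le 1$ a.e., so the defining averages $\vint{B(x,r)\cap\Om}|w-Tw(x)|\,d\mu$ cannot tend to $0$; the lower bound is symmetric. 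Hence $\|Tw\|_{L^\infty(\partial^*\Om,P(\Om,\cdot))}\le 1$ for any admissible $w$.

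Next I would compare the two functionals on a fixed function. By definition of $I_f$ and $I_h$, for any $w\in\BV(\Om)$,
\[
I_f(w)-I_h(w)=\int_{\partial^*\Om}Tw\,(f-h)\,dP(\Om,\cdot),
\]
so whenever $-1\le w\le 1$ we get $|I_f(w)-I_h(w)|\le \|Tw\|_{L^\infty}\,\|f-h\|_{L^1(\partial^*\Om)}\le\|f-h\|_{L^1(\partial^*\Om)}$ by the previous paragraph.

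Finally I would invoke minimality twice. Since $v$ is admissible for the $f$-problem and $u$ minimizes $I_f$ over admissible functions,
\[
I_f(u)\le I_f(v)\le I_h(v)+\|f-h\|_{L^1(\partial^*\Om)},
\]
giving $I_f(u)-I_h(v)\le\|f-h\|_{L^1(\partial^*\Om)}$. Swapping the roles of $(f,u)$ and $(h,v)$ — now using that $u$ is admissible for the $h$-problem and $v$ minimizes $I_h$ — yields $I_h(v)-I_f(u)\le\|f-h\|_{L^1(\partial^*\Om)}$. Combining the two inequalities gives the claim. I do not anticipate any real obstacle here; the only point requiring a sentence of care is the trace bound $|Tw|\le 1$, which is what lets the $L^\infty$–$L^1$ pairing estimate go through.
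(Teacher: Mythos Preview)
Your proposal is correct and follows essentially the same route as the paper: compare $I_f$ and $I_h$ on a fixed admissible function via the trace bound $|Tw|\le 1$, then use that $u$ and $v$ are competitors for each other's problems to get both one-sided inequalities. The only difference is cosmetic---you justify $|Tw|\le 1$ explicitly whereas the paper uses it tacitly, and the two minimality chains are written in opposite order---but the argument is the same.
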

\begin{proof}
Note that $-1\le v\le 1$ and $-1\le u\le 1$. Therefore,
\[
I_{h}(u)-I_{f}(u)\le |I_{h}(u)-I_{f}(u)|=\biggl| \int_{\partial^*\Om}(f-h)\, Tu\, dP(\Om,\cdot)\biggr| \le \| f-h\|_{L^1(\partial^*\Om)}
\]
and
\[
I_{f}(v)-I_{h}(v)\le |I_{f}(v)-I_{h}(v)|=\biggl| \int_{\partial^*\Om}(f-h)\, Tv\, dP(\Om,\cdot)\biggr| \le \| f-h\|_{L^1(\partial^*\Om)}.
\]
It follows that
\[
I_f(u)\ge I_h(u)-\| f-h\|_{L^1(\partial^*\Om)}\ge I_h(v)-\| f-h\|_{L^1(\partial^*\Om)}
\]
and
\[
I_h(v)\ge I_f(v)-\| f-h\|_{L^1(\partial^*\Om)}\ge I_f(u)-\| f-h\|_{L^1(\partial^*\Om)}.
\]
In the above, we used the facts that $v$ is a solution for $I_h$ and that $u$ is a solution
for $I_f$. The desired conclusion now follows.
\end{proof}
\begin{theorem}\label{thm:stable1}
Assume that $\Om$ satisfies the exterior measure density condition
\eqref{eq:exterior measure density condition} and that $\partial\Om$ is codimension $1$
Ahlfors regular as given in \eqref{eq:boundary codim Ahlfors regularity}.
Assume that $f_k \colon \partial^*\Om \to [-1, 1]$ satisfy \eqref{eq:f integrates to zero},
that $f_k \to f$ in $L^1(\partial^*\Om)$ as $k\to\infty$, and that
$u_k = \ch_{E_1^k} - \ch_{E_2^k}$ are solutions to the
restricted Neumann problem with boundary data $f_k$, for disjoint sets $E_1^k,E_2^k\subset\Om$.
Then, there is a subsequence $\{u_{k_j}\}_{j=1}^\infty$ and a function
$u = \ch_{E_1} - \ch_{E_2}$ such that 
$u_{k_j} \to u$ in $L^1(\Om)$ and $u$ is a solution to the restricted Neumann problem
with boundary data $f$.
\end{theorem}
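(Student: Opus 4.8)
The plan is to combine the uniform perimeter bounds that a solution automatically enjoys with the $\BV$-compactness theorem \cite[Theorem~3.7]{M}, and then transfer the minimality of $u_k$ (for datum $f_k$) to the limit (for datum $f$) using the lower semicontinuity Lemma~\ref{lem:lower semicontinuity of I} together with the crude estimate $|T v|\le 1$ for the set-type competitors. First I would note that the limit datum $f$ again satisfies the standing hypotheses: as an $L^1(\partial^*\Om)$-limit of the $f_k$ we have $f\in L^1(\partial^*\Om)$, $-1\le f\le 1$ $P(\Om,\cdot)$-a.e., and $\int_{\partial^*\Om}f\,dP(\Om,\cdot)=0$, so by Theorem~\ref{thm:existence of solutions} there is a solution $w=\ch_{F_1}-\ch_{F_2}$ to the restricted Neumann problem with boundary data $f$, and $I_f(w)=\min\{I_f(v):v\in\BV(\Om),\ -1\le v\le 1\}$.

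Next come the uniform bounds. Since $u_k$ is a solution and the zero function is admissible, $I_{f_k}(u_k)\le I_{f_k}(0)=0$. By Lemma~\ref{lem:splitting into positive and negative parts}, $I_{f_k}(\ch_{E_1^k})=I_{f_k}(u_k)-I_{f_k}(-\ch_{E_2^k})$, and since $|f_k|\le 1$ we have $I_{f_k}(-\ch_{E_2^k})\ge -\int_{\partial^*\Om}|f_k|\,dP(\Om,\cdot)\ge -P(\Om,X)$; hence $I_{f_k}(\ch_{E_1^k})\le P(\Om,X)$, and symmetrically $I_{f_k}(-\ch_{E_2^k})\le P(\Om,X)$. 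Lemma~\ref{lem:estimate for perimeter in whole space} (applicable because $\Om$ satisfies \eqref{eq:exterior measure density condition} and $-1\le f_k\le 1$) then yields $P(E_1^k,X)\le 3P(\Om,X)$ and $P(E_2^k,X)\le 3P(\Om,X)$ for all $k$. By \cite[Theorem~3.7]{M} we may pass to a subsequence (not relabeled) so that $\ch_{E_1^k}\to\ch_{E_1}$ and $\ch_{E_2^k}\to\ch_{E_2}$ in $L^1(\Om)$ for some $E_1,E_2\subset\Om$; then $\mu(E_1\cap E_2)=0$, $P(E_1,X),P(E_2,X)<\infty$ by lower semicontinuity of perimeter, and $u_k\to u\coloneq\ch_{E_1}-\ch_{E_2}$ in $L^1(\Om)$ with $-1\le u\le 1$.

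Now I would identify the limiting energy. By Lemma~\ref{lem:compare-two-bdry-data}, applied to the solution $u_k$ for data $f_k$ and the solution $w$ for data $f$, $|I_{f_k}(u_k)-I_f(w)|\le\|f_k-f\|_{L^1(\partial^*\Om)}\to 0$, so $I_{f_k}(u_k)\to I_f(w)$. It remains to show $I_f(u)\le I_f(w)$. Lemma~\ref{lem:lower semicontinuity of I}, applied with datum $f$ to $\ch_{E_1^k}\to\ch_{E_1}$ and to $\ch_{E_2^k}\to\ch_{E_2}$, gives $I_f(\ch_{E_1})\le\liminf_k I_f(\ch_{E_1^k})$ and $I_f(-\ch_{E_2})\le\liminf_k I_f(-\ch_{E_2^k})$. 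Since $|T\ch_{E_i^k}|\le 1$, we have $I_f(\ch_{E_1^k})\le I_{f_k}(\ch_{E_1^k})+\|f-f_k\|_{L^1(\partial^*\Om)}$ and likewise for $-\ch_{E_2^k}$, so $I_f(\ch_{E_1})\le\liminf_k I_{f_k}(\ch_{E_1^k})$ and $I_f(-\ch_{E_2})\le\liminf_k I_{f_k}(-\ch_{E_2^k})$. Adding these and using Lemma~\ref{lem:splitting into positive and negative parts} twice,
\[
I_f(u)=I_f(\ch_{E_1})+I_f(-\ch_{E_2})\le\liminf_{k\to\infty}\bigl(I_{f_k}(\ch_{E_1^k})+I_{f_k}(-\ch_{E_2^k})\bigr)=\liminf_{k\to\infty}I_{f_k}(u_k)=I_f(w).
\]
Hence $I_f(u)=I_f(w)=\min I_f$, so $u$ is a solution to the restricted Neumann problem with boundary data $f$, which is the claim.

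The main obstacle I anticipate is purely the bookkeeping in the last paragraph: the functionals $I_{f_k}$ and $I_f$ are genuinely different, yet one must invoke lower semicontinuity (available only for the fixed datum $f$) while keeping the $L^1(\partial^*\Om)$-errors under control. The point that makes the transfer go through is the uniform bound $|T\ch_{E_i^k}|\le 1$, which holds precisely because Proposition~\ref{prop:existence of set minimizers} lets us take the minimizers in the set-difference form $\ch_{E_1^k}-\ch_{E_2^k}$; the same bound is what delivers the uniform perimeter estimates needed for compactness.
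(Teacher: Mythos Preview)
Your proof is correct and follows essentially the same route as the paper's: uniform perimeter bounds via Lemma~\ref{lem:estimate for perimeter in whole space}, compactness from \cite[Theorem~3.7]{M}, convergence of the minimal values via Lemma~\ref{lem:compare-two-bdry-data}, and then lower semicontinuity (Lemma~\ref{lem:lower semicontinuity of I}) combined with the $\|f-f_k\|_{L^1}$ error to pass from $I_f$ to $I_{f_k}$. The only cosmetic difference is that the paper obtains the sharper bound $P(E_i^k,X)\le 2P(\Om,X)$ by invoking $I_{f_k}(\ch_{E_1^k})\le I_{f_k}(\ch_\emptyset)=0$ directly (via Lemma~\ref{lem:both E1 and E2 are minimizers}), whereas your detour through $I_{f_k}(u_k)\le 0$ and $I_{f_k}(-\ch_{E_2^k})\ge -P(\Om,X)$ yields $3P(\Om,X)$; either constant suffices for compactness.
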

\begin{proof}
Clearly $f$ also satisfies \eqref{eq:f integrates to zero}.
By Theorem \ref{thm:existence of solutions}, we know that there exists a solution
$v\in\BV(\Om)$ for boundary data $f$. By Lemma~\ref{lem:compare-two-bdry-data},
$|I_f(v) - I_{f_k}(u_k)| \le \|f-f_k\|_{L^1(\partial^*\Om)} \to 0$ as $k\to \infty$.
By the fact that $u_k$ are solutions and Lemma \ref{lem:estimate for perimeter in whole space},
we get
\[
  \max\{ P(E_1^k, X), P(E_2^k, X)\} \le 2 P(\Om, X)\,.
\]
Thus by \cite[Theorem~3.7]{M}, there are sets $E_1,E_2\subset \Om$
such that $\ch_{E_1^k} \to \ch_{E_1}$ and 
$\ch_{E_2^k} \to \ch_{E_2}$ in $L^1(X)$, possibly having passed to a subsequence
(not relabeled). Define $u= \ch_{E_1} - \ch_{E_2}$. Then, by
the lower semicontinuity given in Lemma \ref{lem:lower semicontinuity of I},
\begin{align*}
  I_f(u)  & = I_f(\ch_{E_1}) + I_f(-\ch_{E_2})
  \le \liminf_{k\to\infty} I_f(\ch_{E^k_1}) + \liminf_{k\to\infty} I_f(-\ch_{E^k_2}) \\
  & \le \liminf_{k\to\infty} \bigl(I_{f_k}(\ch_{E^k_1}) + \|f-f_k\|_{L^1(\partial^*\Om)}\bigr) \\
  & \qquad  + \liminf_{k\to\infty} \bigl(I_{f_k}(-\ch_{E^k_2}) + \|f-f_k\|_{L^1(\partial^*\Om)}\bigr) \\
  & \le \liminf_{k\to\infty} \bigl(I_{f_k}(u_k) + 2 \|f-f_k\|_{L^1(\partial^*\Om)} \bigr) = I_f(v).
\end{align*}
Now, $v$ was a solution for $I_f$ and hence so is $u$.
\end{proof}
Observe that minimality of solutions need not be preserved when perturbing the boundary data. In 
Example~\ref{exa:stability-issues}, we saw that $u_{2k} \to u$ as $k\to\infty$, where $u$ was a solution for the limit 
boundary data. Nonetheless, while $u_{2k}$ were the minimal solutions for the respective boundary value problems, that 
was not the case for $u$, since the minimal solution for the limit boundary data was given by $u_\infty$.

In the example, the boundary data were given as $f_{2k} = \ch_{F_1^{2k}} - \ch_{F_2^{2k}}$ for decreasing sequences 
of sets $\{F_1^{2k}\}_{k=1}^\infty$ and $\{F_2^{2k}\}_{k=1}^\infty$.
One might also ask whether the minimality of a solution 
is preserved if the boundary data has the form $f_k = \ch_{F_1^k} - \ch_{F_2^k}$
for increasing sequences of sets 
$\{F_1^{k}\}_{k=1}^\infty$ and $\{F_2^{k}\}_{k=1}^\infty$. The next example shows that the minimality can be lost in this case as well.
\begin{example}
\label{exa:minimality-lost2}
Let $\Om = B(0,1) \subset \R^2 \cong \C$ (unweighted) and
\[
  f_k(e^{i\theta}) =
  \begin{cases}
    1&\text{when } \theta \in (\pi-\theta_k, \pi+\theta_k), \\
    -1 & \text{when } \theta \in (\frac{\pi}{3}-\theta_k,\frac{\pi}{3}) \cup (-\frac{\pi}{3},\theta_k - \frac{\pi}{3}), \\
    0 & \text{otherwise},
  \end{cases}
\]
where $\theta_k = \frac{\pi k}{3(k+1)}$.
Then, the minimal solutions are given by $u_k = \ch_{E_1^k} - \ch_{E_2^k}$, where 
$E_1^k = \{z \in \Om: \Re z > -\cos\theta_k\}$ and $E_2^k = \Om \setminus E_1^k$. The minimal solution for boundary data  given by the limit function $f_\infty$ is determined by the sets
$E_1 = \{z \in \Om: \Re z > \frac12 \}$ and 
$E_2 = \{z \in \Om: \Re z < \frac{-1}{2}\}$. In particular,
$E_1 \subsetneq \bigcap_k E_1^k = \Om \setminus E_2$.

\vspace{\topsep}
\noindent
\begin{minipage}[t][][t]{0.495\linewidth}
\center
\includegraphics[width=44mm,height=40mm,page=5]{NeumannP=1-Examples.pdf}

The minimal solution for $f_k$, $k\in\N$.
\end{minipage}
\begin{minipage}[t][][t]{0.495\linewidth}
\center
\includegraphics[width=44mm,height=40mm,page=6]{NeumannP=1-Examples.pdf}

The minimal solution for $f_\infty$.
\end{minipage}
\end{example}
In light of the above example, we give one explicit construction of a solution
(but not necessarily a minimal one) for limit boundary data. We first need the following
more general lemma.

In what follows, for $E\subset\Om$ of finite perimeter in $\Om$,
we denote
\[
I_f(E)\coloneq I_f(\ch_E)=\frac12I_f(\ch_{E}-\ch_{\Om\setminus E}).
\]
\begin{lemma}\label{lem:prelim-increase}
For each $k\in\N$, assume that
$f_k\in L^1(\partial^*\Om)$ satisfies \eqref{eq:f integrates to zero} and suppose that
$E_1^k, E_2^k\subset\Om$ are disjoint sets
such that $\ch_{E_2^k}-\ch_{E_1^k}$ is a solution to the restricted Neumann problem
with boundary data $f_k$.
Denote $E_k\coloneq E_1^k$.
Then for each $n\in\N$ and for each choice of
$k_1,\cdots,k_n\in\N$ with $k_1<\cdots<k_n$, we have 
\[
0\le I_{f_{k_n}}(E_{k_1}\cup\cdots\cup E_{k_n})-I_{f_{k_n}}(E_{k_n})
  \le 2\, \sum_{j=1}^{n-1}\| f_{k_j}-f_{k_{j+1}}\|_{L^1(\partial^*\Om)}.
\]
\end{lemma}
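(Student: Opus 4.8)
The plan is to prove the two-sided estimate by induction on $n$, where the lower bound is the easy half and follows from strong subadditivity together with the fact that $\ch_{E_{k_n}}$ is $I_{f_{k_n}}$-minimal among single-set competitors. First I would dispense with the lower bound: since $\ch_{E_2^{k_n}}-\ch_{E_{k_n}}$ solves the restricted Neumann problem with data $f_{k_n}$, Lemma~\ref{lem:both E1 and E2 are minimizers} gives $I_{f_{k_n}}(E_{k_n})=I_{f_{k_n}}(\ch_{E_{k_n}})\le I_{f_{k_n}}(\ch_F)$ for every $\mu$-measurable $F\subset\Om$, in particular for $F=E_{k_1}\cup\cdots\cup E_{k_n}$. (Note $P(E_{k_n},\Om)<\infty$ since $u_{k_n}$ is a solution, and each $E_{k_j}$ has finite perimeter in $\Om$ as well; finite unions of sets of finite perimeter in $\Om$ again have finite perimeter in $\Om$ by \cite[Proposition 4.7]{M}, so the competitor is admissible.) This yields the lower bound $0\le I_{f_{k_n}}(E_{k_1}\cup\cdots\cup E_{k_n})-I_{f_{k_n}}(E_{k_n})$.

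For the upper bound I would induct on $n$. The base case $n=1$ is trivial since the difference is $0$. For the inductive step, write $A\coloneq E_{k_1}\cup\cdots\cup E_{k_{n-1}}$ and $B\coloneq E_{k_n}$, so the union in question is $A\cup B$. Apply strong subadditivity (Lemma~\ref{lem:strong subadditivity of I}) with respect to $I_{f_{k_n}}$:
\[
I_{f_{k_n}}(A\cap B)+I_{f_{k_n}}(A\cup B)\le I_{f_{k_n}}(A)+I_{f_{k_n}}(B),
\]
hence
\[
I_{f_{k_n}}(A\cup B)-I_{f_{k_n}}(B)\le I_{f_{k_n}}(A)-I_{f_{k_n}}(A\cap B)\le I_{f_{k_n}}(A)-I_{f_{k_n}}(B),
\]
where in the last step I used $I_{f_{k_n}}(A\cap B)\ge I_{f_{k_n}}(B)$, which again follows from the single-set minimality of $\ch_{E_{k_n}}$ for the data $f_{k_n}$ via Lemma~\ref{lem:both E1 and E2 are minimizers}. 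So it remains to bound $I_{f_{k_n}}(A)-I_{f_{k_n}}(B)$, i.e. $I_{f_{k_n}}(E_{k_1}\cup\cdots\cup E_{k_{n-1}})-I_{f_{k_n}}(E_{k_n})$.

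Here is the step I expect to be the main obstacle: switching the boundary data from $f_{k_n}$ to $f_{k_{n-1}}$ so that the inductive hypothesis applies. Since $-1\le T\ch_F\le 1$ $P(\Om,\cdot)$-a.e.\ for any $F\subset\Om$ of finite perimeter (traces of characteristic functions take values in $\{0,1\}$), we have $|I_{f_{k_n}}(\ch_F)-I_{f_{k_{n-1}}}(\ch_F)|=\bigl|\int_{\partial^*\Om}(f_{k_n}-f_{k_{n-1}})T\ch_F\,dP(\Om,\cdot)\bigr|\le\|f_{k_n}-f_{k_{n-1}}\|_{L^1(\partial^*\Om)}$. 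Applying this twice (once to $F=E_{k_1}\cup\cdots\cup E_{k_{n-1}}$ and once to $F=E_{k_n}$) gives
\[
I_{f_{k_n}}(A)-I_{f_{k_n}}(B)\le I_{f_{k_{n-1}}}(A)-I_{f_{k_{n-1}}}(B)+2\|f_{k_n}-f_{k_{n-1}}\|_{L^1(\partial^*\Om)}.
\]
Now $\ch_{E_{k_{n-1}}}$ is the $I_{f_{k_{n-1}}}$-minimal single set, so $I_{f_{k_{n-1}}}(B)=I_{f_{k_{n-1}}}(E_{k_n})\ge I_{f_{k_{n-1}}}(E_{k_{n-1}})$, and therefore
\[
I_{f_{k_{n-1}}}(A)-I_{f_{k_{n-1}}}(B)\le I_{f_{k_{n-1}}}(E_{k_1}\cup\cdots\cup E_{k_{n-1}})-I_{f_{k_{n-1}}}(E_{k_{n-1}}),
\]
which by the inductive hypothesis is at most $2\sum_{j=1}^{n-2}\|f_{k_j}-f_{k_{j+1}}\|_{L^1(\partial^*\Om)}$. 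Combining, $I_{f_{k_n}}(A\cup B)-I_{f_{k_n}}(B)\le 2\sum_{j=1}^{n-1}\|f_{k_j}-f_{k_{j+1}}\|_{L^1(\partial^*\Om)}$, completing the induction. The delicate point to get right is the bookkeeping in the two data-swaps and making sure every intermediate object ($A$, $A\cap B$, $A\cup B$) genuinely has finite perimeter in $\Om$ so that $I_{f}(\cdot)$ and the subadditivity lemma are applicable; this is where I would be most careful, but it is routine given \cite[Proposition 4.7]{M}.
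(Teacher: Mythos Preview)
Your argument is correct and is essentially the paper's proof reorganized as a formal induction: both rely on the single-set minimality of $E_k$ for $I_{f_k}$ (Lemma~\ref{lem:both E1 and E2 are minimizers}) together with strong subadditivity (Lemma~\ref{lem:strong subadditivity of I}) to strip off one $E_{k_j}$ at a time, paying an $L^1$-cost at each data swap. The only cosmetic difference is that you swap boundary data on both $A$ and $E_{k_n}$ at each inductive step (cost $2\|f_{k_{n-1}}-f_{k_n}\|$), whereas the paper swaps once per step, iterates directly down to $I_{f_{k_1}}(E_{k_1})$, and then invokes Lemma~\ref{lem:compare-two-bdry-data} once at the end to absorb the remaining $\|f_{k_1}-f_{k_n}\|\le\sum_{j=1}^{n-1}\|f_{k_j}-f_{k_{j+1}}\|$.
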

\begin{proof}
The first inequality follows from Lemma \ref{lem:both E1 and E2 are minimizers}. To prove the second, note first that
for $K\subset\Om$ of finite perimeter in $\Om$ and for $k\in\N$, we have that $I_{f_k}(E_k)\le I_{f_k}(E_k\cap K)$.
Moreover, by Lemma \ref{lem:strong subadditivity of I} we know that
\[
I_{f_k} (E_k\cup K)+I_{f_k}(E_k\cap K)\le
I_{f_k}(E_k)+I_{f_k}(K),
\]
and so
\begin{equation}\label{eq:1}
I_{f_k}(E_k\cup K)\le I_{f_k}(K).
\end{equation}
Furthermore, if $m\in\N$, then
\begin{equation}\label{eq:2}
I_{f_k}(K)\le I_{f_m}(K)+\| f_k-f_m\|_{L^1(\partial^*\Om)}.
\end{equation}
Now by an iterated ($(n-1)$-times) application of~\eqref{eq:1} followed by~\eqref{eq:2}, and
finally by
Lemma~\ref{lem:compare-two-bdry-data}, we obtain
\begin{align*}
I_{f_{k_n}}(E_{k_1}\cup\cdots\cup&\, E_{k_n})
\le I_{f_{k_n}}(E_{k_1}\cup\cdots \cup E_{k_{n-1}})\\
  &\le I_{f_{k_{n-1}}}(E_{k_1}\cup\cdots\cup E_{k_{n-1}})+\| f_{k_{n-1}}-f_{k_n}\|_{L^1(\partial^*\Om)}\\
  &\le \ldots\\
  &\le I_{f_{k_1}}(E_{k_1})+\sum_{j=1}^{n-1}\| f_{k_j}-f_{k_{j+1}}\|_{L^1(\partial^*\Om)}\\
  &\le I_{f_{k_n}}(E_{k_n})+\| f_{k_1}-f_{k_n}\|_{L^1(\partial^*\Om)}+\sum_{j=1}^{n-1}\| f_{k_j}-f_{k_{j+1}}\|_{L^1(\partial^*\Om)}\\
  &\le I_{f_{k_n}}(E_{k_n})+2\, \sum_{j=1}^{n-1}\| f_{k_j}-f_{k_{j+1}}\|_{L^1(\partial^*\Om)}.
\end{align*} 
Thus we obtain the desired inequality.
\end{proof}
\begin{theorem}\label{thm:monotone} 
Suppose that
$\Om$ satisfies the exterior measure density 
condition~\eqref{eq:exterior measure density condition}, and 
that $\partial\Om$ is codimension $1$ Ahlfors regular
as given in \eqref{eq:boundary codim Ahlfors regularity}.
For each $k\in\N$, suppose that $f_k\colon\partial^*\Om\to[-1,1]$ satisfies
\eqref{eq:f integrates to zero}, that $\| f_k-f_{k+1}\|_{L^1(\partial^*\Om)}\le 2^{-k}$,
and that $E_1^k, E_2^k\subset\Om$ are disjoint sets
such that $\ch_{E_1^k}-\ch_{E_2^k}$ is a solution for boundary data $f_k$. Set
$E_k=E_1^k$. Then the limit supremum
\[
E_+\coloneq \bigcap_{n\in\N}\bigcup_{k=n}^\infty E_k
\]
gives a solution $\ch_{E_+}-\ch_{\Om\setminus E_+}$ for the boundary data $f\coloneq \lim_k f_k$.
\end{theorem}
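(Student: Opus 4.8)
The plan is to show that $E_+$ minimizes $F\mapsto I_f(\ch_F)$ among measurable sets $F\subset\Om$; by Lemma~\ref{lem:I(E)=I(-E)} and Lemma~\ref{lem:both E1 and E2 are minimizers} this is exactly what is needed for $\ch_{E_+}-\ch_{\Om\setminus E_+}$ to solve the restricted Neumann problem with data $f$. First I would record the easy preliminaries. Since $\sum_k\|f_k-f_{k+1}\|_{L^1(\partial^*\Om)}\le\sum_k 2^{-k}<\infty$, the sequence $(f_k)$ is Cauchy, so $f=\lim_k f_k$ exists in $L^1(\partial^*\Om)$ and inherits $-1\le f\le 1$ and \eqref{eq:f integrates to zero}. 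Each $E_k=E_1^k$ has finite perimeter in $\Om$, being the positive part of the $\BV$ solution $\ch_{E_1^k}-\ch_{E_2^k}$. Write $m_k\coloneq\inf_F I_{f_k}(\ch_F)$ and $m\coloneq\inf_F I_f(\ch_F)$, infima over measurable $F\subset\Om$. Then Lemma~\ref{lem:both E1 and E2 are minimizers} gives $I_{f_k}(\ch_{E_k})=m_k$, while Theorem~\ref{thm:existence of solutions} together with Proposition~\ref{prop:existence of set minimizers} shows $m$ is attained by some set; applying Lemma~\ref{lem:compare-two-bdry-data} to the solutions $\ch_{E_k}-\ch_{\Om\setminus E_k}$ (also solutions, by Lemma~\ref{lem:I(E)=I(-E)}) and to a solution for $f$ yields $|m_k-m|\le\tfrac12\|f_k-f\|_{L^1(\partial^*\Om)}\to0$.

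Next I would set, for $n\le N$, $F_n^N\coloneq E_n\cup E_{n+1}\cup\dots\cup E_N$, a set of finite perimeter in $\Om$. Applying Lemma~\ref{lem:prelim-increase} to the chain $n<n+1<\dots<N$ and using $\|f_j-f_{j+1}\|_{L^1(\partial^*\Om)}\le 2^{-j}$ gives
\[
0\le I_{f_N}(F_n^N)-I_{f_N}(E_N)\le 2\sum_{j=n}^{N-1}\|f_j-f_{j+1}\|_{L^1(\partial^*\Om)}\le 2^{-n+2}.
\]
Since $|I_f(\ch_K)-I_{f_N}(\ch_K)|\le\|f-f_N\|_{L^1(\partial^*\Om)}$ for every measurable $K\subset\Om$ (because $0\le T\ch_K\le1$) and $I_{f_N}(E_N)=m_N$, this yields
\[
I_f(F_n^N)\le m_N+2^{-n+2}+\|f-f_N\|_{L^1(\partial^*\Om)}.
\]

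To finish I would pass to the limit twice using the lower semicontinuity of $I_f$ (Lemma~\ref{lem:lower semicontinuity of I}, whose hypotheses hold by the assumptions on $\Om$ and $-1\le f\le1$). Fixing $n$ and letting $N\to\infty$, the sets $F_n^N$ increase to $G_n\coloneq\bigcup_{k\ge n}E_k$, so $\ch_{F_n^N}\to\ch_{G_n}$ in $L^1(\Om)$; using $m_N\to m$ we obtain $I_f(\ch_{G_n})\le m+2^{-n+2}$, which in particular forces $P(G_n,\Om)<\infty$. The sets $G_n$ decrease to $E_+=\bigcap_n G_n$, so $\ch_{G_n}\to\ch_{E_+}$ in $L^1(\Om)$, and a second application of Lemma~\ref{lem:lower semicontinuity of I} gives $I_f(\ch_{E_+})\le\liminf_n I_f(\ch_{G_n})\le m$. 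Since trivially $I_f(\ch_{E_+})\ge m$, we conclude $I_f(\ch_{E_+})=m\le I_f(\ch_F)$ for every measurable $F\subset\Om$. By Lemma~\ref{lem:I(E)=I(-E)} we also have $I_f(-\ch_{\Om\setminus E_+})=I_f(\ch_{E_+})=m\le I_f(\ch_{\Om\setminus F})=I_f(-\ch_F)$ for every such $F$, so Lemma~\ref{lem:both E1 and E2 are minimizers} shows $\ch_{E_+}-\ch_{\Om\setminus E_+}$ is a solution for $f$.

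The hard part is essentially packaged inside Lemma~\ref{lem:prelim-increase}: its content is that assembling the increasing sets $F_n^N$ out of the level sets $E_k$ inflates the energy $I_{f_N}(\cdot)$ by no more than the accumulated $L^1$-drift of the boundary data, which rests on the strong subadditivity of $I$ (Lemma~\ref{lem:strong subadditivity of I}) combined with the minimality of each $E_k$. Beyond invoking that lemma, the only points requiring care are bookkeeping which boundary datum each functional is evaluated against at each stage, the uniform conversion estimate $|I_f(\ch_K)-I_{f_N}(\ch_K)|\le\|f-f_N\|_{L^1(\partial^*\Om)}$, and verifying the finite-perimeter hypotheses ($P(G_n,\Om)<\infty$) needed before applying the lower semicontinuity lemma the second time.
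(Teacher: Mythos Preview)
Your proof is correct and follows essentially the same route as the paper's: both use Lemma~\ref{lem:prelim-increase} to bound $I_{f_N}(E_n\cup\dots\cup E_N)-I_{f_N}(E_N)$ by a geometric tail, convert between $I_f$ and $I_{f_N}$ via the uniform estimate $|I_f(\ch_K)-I_{f_N}(\ch_K)|\le\|f-f_N\|_{L^1(\partial^*\Om)}$, and then apply the lower semicontinuity Lemma~\ref{lem:lower semicontinuity of I} twice (first $N\to\infty$, then $n\to\infty$). Your write-up is in fact slightly more careful than the paper's in two places: you explicitly note that $P(G_n,\Om)<\infty$ before the second application of Lemma~\ref{lem:lower semicontinuity of I}, and you spell out the final step via Lemma~\ref{lem:both E1 and E2 are minimizers} rather than leaving it implicit.
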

\begin{proof}
By Theorem \ref{thm:existence of solutions} we know that there exists a solution
$v\in\BV(\Om)$ for boundary data $f$.
From Lemma~\ref{lem:compare-two-bdry-data}
we see that
\begin{equation}\label{eq:v and Em}
| I_f(v)-2I_{f_m}(E_m) |\le \| f-f_m\|_{L^1(\partial^*\Om)}\to 0\quad\text{as }m\to\infty.
\end{equation}
For each $n\in\N$, set $K_n = \bigcup_{k=n}^\infty E_k$.
By Lebesgue's dominated convergence theorem,
$\ch_{E_n\cup\cdots\cup E_m}\to\ch_{K_n}$ in $L^1(\Om)$ as $m\to\infty$.
Fix $n\in\N$. By Lemma~\ref{lem:prelim-increase} and Lemma \ref{lem:lower semicontinuity of I},
\begin{align*}
\limsup_{m\to\infty} & \, I_{f_m}(E_m) \ge \limsup_{m\to\infty}I_{f_m}(E_n\cup\cdots\cup E_m)-2^{2-n}\\
  & \ge \liminf_{m\to\infty}I_{f}(E_n\cup\cdots\cup E_m)
  -\limsup_{m\to\infty}\| f-f_m\|_{L^1(\partial^*\Om)}-2^{2-n}\\
  &\ge I_f(K_n)-2^{2-n}.
\end{align*}
By letting $n\to\infty$ and recalling \eqref{eq:v and Em}, we obtain
\[
\frac{I_f(v)}{2}=\lim_{m\to\infty} I_{f_m}(E_m)
=\liminf_{n\to\infty} I_f(K_n)\ge I_f(E_+)
\]
by Lemma \ref{lem:lower semicontinuity of I}, since $\ch_{K_n}\to\ch_{E_+}$ in $L^1(\Om)$.
Thus $\ch_{E_+}$ is also a solution for boundary data $f$.
\end{proof}
It can be seen from Example~\ref{exa:minimality-lost2} that the set $E_+$ constructed in Theorem~\ref{thm:monotone} need 
not yield a \emph{minimal} solution to the limit Neumann problem.
\section{The unrestricted minimization problem}
In this section we always assume that $\Om$ is a nonempty bounded domain
with $P(\Om,X)<\infty$, such that the trace operator $T\colon\BV(\Om)\to L^1(\partial^*\Om, P(\Om,\cdot))$ is bounded.

So far we have looked at the most general situation where it is possible to have 
$I_f(u)=\| Du\|(\Om)+\int_{\partial^*\Om}Tu\,f\, dP(\Om,\cdot)<0$ for some $u\in \BV(\Om)$. 
To overcome the fact that should $I_f(u)<0$ for some $u$ then the minimal value of $I_f$ is $-\infty$, we 
considered minimization only over $u\in \BV(\Om)$ for which $-1\le u\le 1$.
In the special case where
\[
\inf_{u\in \BV(\Om)}I_f(u)\ge 0,
\]
the minimal energy must necessarily be $0$; hence constant functions (and in particular, the zero function) would be a
solution to the given Neumann boundary value problem with boundary data $f$. In this case we do not here need to
restrict our attention to $-1\le u\le 1$ alone, but to \emph{all} functions in the class $\BV(\Om)$. In this case it would be interesting
to see under what conditions we would have \emph{nonconstant} minimizers of $I_f$ exist. If there is one, then there are
infinitely many distinct (in the sense that they do not differ only by a constant) minimizers,
as seen by multiplying by a scalar. In this study we take inspiration 
from~\cite{Mo17}. We do not have a criterion that guarantees existence of a nonconstant minimizer. In the Euclidean setting,
the PDE approach helps in forming such a guarantee, but we do not have such an approach in the metric setting. However,
we do obtain a criterion under which there is no nonconstant minimizer, see Proposition~\ref{prop:existence} below.
As a consequence of Proposition~\ref{prop:scaling}
we also obtain that if there are no minimizers for the unrestricted problem for the
boundary data $f$,
then there is a positive number $\lambda(-f)$ such that the boundary data $\lambda(-f) f$ does have a minimizer.

From now on, let $g\in L^\infty(\partial^*\Om, P(\Om,\cdot))$ with
$\int_{\partial^*\Om}g\,dP(\Om,\cdot)=0$. We set 
$\mathcal{M}_g$ to be the collection of all functions $u\in \BV(\Om)$ such that
$\int_\Om u\, d\mu=0$ and
$\int_{\partial^*\Om}Tu\,g\, dP(\Om, \cdot)=1$, and 
\[
\lambda(g)\coloneq \inf_{u\in\mathcal{M}_g}\| Du\|(\Om).
\]
Note that if $\lambda(g)<1$, then there is some $u\in \BV(\Om)$ such that $I_{-g}(u)<0$, and hence
the unrestricted minimization problem for $f=-g$ has no solution.

\begin{proposition} \label{prop:existence}
If $\lambda(g)\ge 1$, then there is a solution to the unrestricted minimization problem for the energy $I_{-g}$ on $\Om$. Furthermore,
if $\lambda(g)>1$ then the only minimizers are constant functions.
\end{proposition}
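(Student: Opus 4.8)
The plan is to show that the hypothesis $\lambda(g)\ge 1$ forces $I_{-g}(u)\ge 0$ for every $u\in\BV(\Om)$. Since every constant function $c$ satisfies $\|Dc\|(\Om)=0$ and $\int_{\partial^*\Om}c\,g\,dP(\Om,\cdot)=c\int_{\partial^*\Om}g\,dP(\Om,\cdot)=0$, so that $I_{-g}(c)=0$, this inequality immediately makes constants minimizers and yields existence. First I would split off the trivial case: writing $a\coloneq\int_{\partial^*\Om}Tu\,g\,dP(\Om,\cdot)$, if $a\le 0$ then $I_{-g}(u)=\|Du\|(\Om)-a\ge 0$ with nothing to prove. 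So assume $a>0$. Let $m\coloneq u_\Om=\frac{1}{\mu(\Om)}\int_\Om u\,d\mu$, which is finite as $\Om$ is bounded and $u\in L^1(\Om)$; then $u-m\in\BV(\Om)$ with $\|D(u-m)\|(\Om)=\|Du\|(\Om)$, and $T(u-m)=Tu-m$ at $P(\Om,\cdot)$-a.e.\ point (the trace is linear and the trace of the constant $m$ equals $m$ by the very definition of trace). Set $w\coloneq(u-m)/a$. Using $\int_{\partial^*\Om}g\,dP(\Om,\cdot)=0$ one checks directly that $\int_\Om w\,d\mu=0$ and $\int_{\partial^*\Om}Tw\,g\,dP(\Om,\cdot)=1$, i.e.\ $w\in\mathcal M_g$. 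Hence $\|Dw\|(\Om)\ge\lambda(g)\ge 1$, and since $\|Dw\|(\Om)=\frac{1}{a}\|Du\|(\Om)$ this gives $\|Du\|(\Om)\ge a$, that is, $I_{-g}(u)\ge 0$. This proves the first claim.

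For the second claim I would argue by contradiction using the same normalization, now exploiting strictness. Suppose $\lambda(g)>1$ and $u$ is a non-constant minimizer, so $I_{-g}(u)=0$. Because $\Om$ is a domain (connected) and $X$ supports the relative isoperimetric inequality~\eqref{eq:relative isoperimetric inequality}, a $\BV$ function with $\|Du\|(\Om)=0$ must be $\mu$-a.e.\ constant on $\Om$: the coarea formula~\eqref{eq:coarea} gives $P(\{u>t\},\Om)=0$ for $\lcal^1$-a.e.\ $t$, and then~\eqref{eq:relative isoperimetric inequality} applied on balls compactly contained in $\Om$ forces each such superlevel set to be $\mu$-a.e.\ $\emptyset$ or $\mu$-a.e.\ $\Om$ by connectedness. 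Hence $\|Du\|(\Om)>0$, and $I_{-g}(u)=0$ yields $a\coloneq\int_{\partial^*\Om}Tu\,g\,dP(\Om,\cdot)=\|Du\|(\Om)>0$. The same construction as above produces $w=(u-m)/a\in\mathcal M_g$, but now with $\|Dw\|(\Om)=\|Du\|(\Om)/a=1$, contradicting $\lambda(g)>1$. Therefore every minimizer is constant.

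I do not expect a serious obstacle here. The only points requiring care are: (i) the normalization producing an element of $\mathcal M_g$ is meaningful only when $a\neq 0$, which is why the sign of the boundary integral must be treated separately at the outset; (ii) the identity $T(u-m)=Tu-m$ holds $P(\Om,\cdot)$-a.e., which is where the boundedness and linearity of $T$ (assumed in this section) together with $Tm=m$ enter; and (iii) the implication $\|Du\|(\Om)=0\Rightarrow u$ constant, which relies on $\Om$ being connected. Everything else reduces to a direct computation with the definitions of $I_{-g}$, $\mathcal M_g$, and $\lambda(g)$.
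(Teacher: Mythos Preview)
Your proof is correct and follows essentially the same approach as the paper: the same case split on the sign of $a=\int_{\partial^*\Om}Tu\,g\,dP(\Om,\cdot)$, the same normalization $(u-m)/a\in\mathcal{M}_g$ to invoke the definition of $\lambda(g)$, and the same observation that constants achieve $I_{-g}=0$. The only cosmetic difference is that you phrase the uniqueness part as a contradiction (producing $w\in\mathcal{M}_g$ with $\|Dw\|(\Om)=1$) while the paper argues directly by case analysis, and you spell out why $\|Du\|(\Om)=0$ forces $u$ to be constant, which the paper leaves implicit.
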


\begin{proof}
We will prove the claim of the proposition by showing that for each $w\in\BV(\Om)$, we have $I_{-g}(w)\ge 0$.

For $w\in\BV(\Om)$, we have two possibilities. 
The first possibility is that $-\int_{\partial^*\Om}Tw\,g\, dP(\Om,\cdot)\ge 0$; in this case we have that $I_{-g}(w)\ge 0$.
Thus it suffices to consider only the case that 
$-\int_{\partial^*\Om}Tw\,g\, dP(\Om,\cdot)<0$.
In this case we set
\[
\alpha(w) = \int_{\partial^*\Om}Tw\,g\, dP(\Om,\cdot),
\]
and note that $\alpha(w)>0$. With $c:=\int_\Om w\, d\mu$, we have
$\alpha(w)^{-1}(w-c)\in \mathcal{M}_g$, and so by the hypothesis of the proposition,
\begin{equation}\label{eq:lambda}
\alpha(w)^{-1}\| Dw\|(\Om)\ge \lambda(g)\ge 1, 
\end{equation}
that is,
\[
\| Dw\|(\Om)\ge \alpha(w)=\int_{\partial^*\Om}Tw\,g\, dP(\Om,\cdot).
\] 
It then follows that 
\[
I_{-g}(w)=\| Dw\|(\Om)+\int_{\partial^*\Om}Tw\, [-g]\, dP(\Om,\cdot)\ge 0.
\]
Finally, suppose that $\lambda(g)>1$. If $-\int_{\partial^*\Om}Tw\,g\, dP(\Om,\cdot)<0$, then~\eqref{eq:lambda} implies that
\[
\| Dw\|(\Om)\ge \lambda(g)\, \int_{\partial^*\Om}Tw\,g\, dP(\Om,\cdot)>\int_{\partial^*\Om}Tw\,g\, dP(\Om,\cdot),
\]
and hence $I_{-g}(w)>0$. On the other hand, if $-\int_{\partial^*\Om}Tw\,g\, dP(\Om,\cdot)\ge 0$, then 
$I_{-g}(w)\ge \| Dw \|(\Om)$. Since the least value of the functional $I_{-g}$ is zero, its minimizer $w\in\BV(\Om)$ satisfies $\| Dw\|(\Om)=0$, that is,
$w$ is constant.
\end{proof}

From the above proposition, it follows that if there is a nonconstant minimizer for $I_{-g}$, then necessarily
$\lambda(g)=1$. Observe that if $\tau$ is a positive real number, then $\lambda(\tau g)=\lambda(g)/\tau$.
Thus if $\lambda(g)>0$, then $I_{-\lambda(g) g}$ does have a minimizer from $\BV(\Om)$.

\begin{proposition} \label{prop:scaling}
Suppose the trace operator $T\colon\BV(\Om)\to L^1(\partial^*\Om, P(\Om,\cdot))$ is surjective
and that there is
a constant $C>0$ such that whenever
$u\in\BV(\Om)$ with $\int_\Om u\, d\mu=0$, we have
\[
\int_{\partial^*\Om}|Tu|\, dP(\Om,\cdot)\le C \| Du\|(\Om).
\]
If $\| g\|_{L^\infty(\partial^*\Om, P(\Om,\cdot))}>0$, then $\lambda(g)>0$.
\end{proposition}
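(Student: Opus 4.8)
The plan is to show that $\bigl(C\,\|g\|_{L^\infty(\partial^*\Om,P(\Om,\cdot))}\bigr)^{-1}$ is a lower bound for $\|Du\|(\Om)$ over every competitor $u\in\mathcal{M}_g$; this bound is positive since $\|g\|_{L^\infty(\partial^*\Om,P(\Om,\cdot))}<\infty$ (recall $P(\Om,X)<\infty$) and $\|g\|_{L^\infty(\partial^*\Om,P(\Om,\cdot))}>0$, and taking the infimum will yield $\lambda(g)>0$.

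First I would dispose of the degenerate case by recording that the surjectivity hypothesis forces $\mathcal{M}_g\neq\emptyset$ (so that $\lambda(g)<\infty$ and the statement is not vacuous — if $\mathcal{M}_g$ were empty we would simply have $\lambda(g)=+\infty>0$). Indeed, since $\|g\|_{L^\infty(\partial^*\Om,P(\Om,\cdot))}>0$, the function $\phi\coloneq\sgn(g)\in L^1(\partial^*\Om,P(\Om,\cdot))$ satisfies $\int_{\partial^*\Om}\phi\,g\,dP(\Om,\cdot)=\int_{\partial^*\Om}|g|\,dP(\Om,\cdot)>0$. By surjectivity of $T$ there is $u_0\in\BV(\Om)$ with $Tu_0=\phi$; subtracting its mean $c\coloneq\vint{\Om}u_0\,d\mu$ does not change the value of $\int_{\partial^*\Om}Tu_0\,g\,dP(\Om,\cdot)$ because $\int_{\partial^*\Om}g\,dP(\Om,\cdot)=0$, so after dividing $u_0-c$ by this nonzero number we obtain an element of $\mathcal{M}_g$.

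The heart of the argument is then just two inequalities chained together. Take an arbitrary $u\in\mathcal{M}_g$. Because $\int_\Om u\,d\mu=0$, the assumed Poincar\'e-type trace estimate gives $\int_{\partial^*\Om}|Tu|\,dP(\Om,\cdot)\le C\|Du\|(\Om)$. Combining this with the normalization $\int_{\partial^*\Om}Tu\,g\,dP(\Om,\cdot)=1$ and the $L^1$--$L^\infty$ pairing,
\begin{align*}
1=\int_{\partial^*\Om}Tu\,g\,dP(\Om,\cdot)
&\le \|g\|_{L^\infty(\partial^*\Om,P(\Om,\cdot))}\int_{\partial^*\Om}|Tu|\,dP(\Om,\cdot)\\
&\le C\,\|g\|_{L^\infty(\partial^*\Om,P(\Om,\cdot))}\,\|Du\|(\Om).
\end{align*}
Hence $\|Du\|(\Om)\ge \bigl(C\,\|g\|_{L^\infty(\partial^*\Om,P(\Om,\cdot))}\bigr)^{-1}$ for every $u\in\mathcal{M}_g$, and passing to the infimum yields $\lambda(g)\ge \bigl(C\,\|g\|_{L^\infty(\partial^*\Om,P(\Om,\cdot))}\bigr)^{-1}>0$.

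There is no serious obstacle here; the only point that requires a little care is the role of the surjectivity hypothesis, which is used solely to ensure that the infimum defining $\lambda(g)$ is taken over a nonempty family, while the positivity itself is an immediate consequence of chaining the trace estimate with the duality pairing built into the constraint $\int_{\partial^*\Om}Tu\,g\,dP(\Om,\cdot)=1$.
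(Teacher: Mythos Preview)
Your proof is correct and follows essentially the same route as the paper: show $\mathcal{M}_g\neq\emptyset$ via surjectivity of the trace, then chain the normalization $\int Tu\,g\,dP(\Om,\cdot)=1$ with the $L^1$--$L^\infty$ pairing and the assumed trace inequality to get the uniform lower bound $\lambda(g)\ge \bigl(C\,\|g\|_{L^\infty}\bigr)^{-1}$. The only cosmetic difference is that the paper uses $Tw=g$ (so $\alpha(w)=\int g^2\,dP(\Om,\cdot)>0$) to produce an element of $\mathcal{M}_g$, whereas you use $Tu_0=\sgn(g)$; both work for the same reason.
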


We refer the interested reader to~\cite[Theorem~5.5]{LaSh} together with~\cite[Theorem~1.2]{MSS}
for geometric conditions on $\Om$ that 
guarantee that the hypotheses of the above proposition hold. Note that if $T$ is a bounded operator in
the sense of~\cite[Theorem~5.5]{LaSh}, then by the Poincar\'e inequality on $\Om$ we obtain the control of
$\int_{\partial^*\Om}|Tu|\, dP(\Om,\cdot)$ solely in terms of $\| Du\|(\Om)$ for $u\in\BV(\Om)$ with $\int_\Om u\, d\mu=0$.

\begin{proof}
Since $\| g\|_{L^\infty(\partial^*\Om, P(\Om,\cdot))}>0$, the class $\mathcal{M}_g$ is non-empty. Indeed,
we can choose a function $w\in\BV(\Om)$ such that $T w=g$, and then 
$0<\alpha(w)= \int_{\partial^*\Om} Tw\,g\, dP(\Om,\cdot)<\infty$. With $c:=\int_\Om w\, d\mu$, 
 we have $\alpha(w)^{-1}(w-c)\in\mathcal{M}_g$.

Now suppose that $w\in \mathcal{M}_g$. As $\int_\Om w\, d\mu=0$, 
we have by assumption
\[
\int_{\partial^*\Om}|Tw|\, dP(\Om,\cdot)\le C \| Dw\|(\Om).
\]
Hence,
\begin{align*}
1= \int_{\partial^*\Om}Tw\,g\, dP(\Om,\cdot) 
  &\le \| g\|_{L^\infty(\partial^*\Om,P(\Om,\cdot))}\int_{\partial^*\Om}|Tw|\, dP(\Om,\cdot)\\
  &\le C \| g\|_{L^\infty(\partial^*\Om,P(\Om,\cdot))}\| Dw\|(\Om).
\end{align*}
Thus we must have
\[
\lambda(g)\ge \frac{1}{C \| g\|_{L^\infty(\partial\Om,P(\Om,\cdot))}}>0.
\qedhere
\]
\end{proof}
\noindent Address:

\vskip1ex plus 1ex minus 0.5ex
\noindent Department of Mathematical Sciences\\
P. O. Box 210025\\
University of Cincinnati\\
Cincinnati, OH 45221-0025\\
U.S.A.

\vskip1ex plus 1ex minus 0.5ex
\noindent E-mail:\\
{\tt panu.lahti@aalto.fi}, {\tt lukas.maly@matfyz.cz}, {\tt shanmun@uc.edu}

\begin{thebibliography}{CC}
\setlength{\itemsep}{1ex plus 1ex minus 0.5ex}
\frenchspacing
%
\bibitem{A1}L.~Ambrosio,
\textit{Fine properties of sets of finite perimeter in doubling metric measure spaces},
Calculus of variations, nonsmooth analysis and related topics.
Set-Valued Anal. 10 (2002), no. 2-3, 111--128.

\bibitem{AFP}L. Ambrosio, N. Fusco, D. Pallara,
\textit{Functions of bounded variation and free discontinuity problems.},
Oxford Mathematical Monographs. The Clarendon Press, Oxford University Press, New York, 2000.

\bibitem{AMP}L.~Ambrosio, M.~Miranda, Jr., and D.~Pallara,
\textit{Special functions of bounded variation in doubling metric measure spaces},
Calculus of variations: topics from the mathematical heritage of E. De Giorgi, 1--45,
Quad. Mat., 14, Dept. Math., Seconda Univ. Napoli, Caserta, 2004.

\bibitem{BB}A. Bj\"orn and J. Bj\"orn,
\textit{Nonlinear potential theory on metric spaces},
EMS Tracts in Mathematics, 17. European Mathematical Society (EMS), Z\"urich, 2011. xii+403 pp.

\bibitem{BB-OD}A. Bj\"orn and J. Bj\"orn,
\textit{Obstacle and Dirichlet problems on arbitrary nonopen sets in metric spaces, and fine topology},
Rev. Mat. Iberoam. 31 (2015), no. 1, 161--214.

\bibitem{BBM}A. Bj\"orn, J. Bj\"orn, and J. Mal\'y,
\textit{Quasiopen and p-path open sets, and characterizations of quasicontinuity},
Potential Anal. 46 (2017), no. 1, 181--199.
\texttt{doi:10.1007/s11118-016-9580-z}.

\bibitem{BBS}A. Bj\"orn, J. Bj\"orn, and N. Shanmugalingam,
\textit{Quasicontinuity of Newton-Sobolev functions and density of
Lipschitz functions on metric spaces}, 
Houston J. Math. 34 (2008), no. 4, 1197--1211.

\bibitem{BBS2}A. Bj\"orn, J. Bj\"orn, and N. Shanmugalingam,
\textit{The Dirichlet problem for p-harmonic functions on metric spaces},
J. Reine Angew. Math. 556 (2003), 173--203.

\bibitem{BBS3}A. Bj\"orn, J. Bj\"orn, and N. Shanmugalingam,
\textit{The Dirichlet problem for p-harmonic functions with respect to the Mazurkiewicz boundary, and new capacities},
J. Differential Equations 259 (2015), no. 7, 3078--3114.

\bibitem{DL}E. Durand-Cartagena, and A. Lemenant,
\textit{Some stability results under domain variation for Neumann problems in metric spaces},
Ann. Acad. Sci. Fenn. Math. 35 (2010), no. 2, 537--563. 

\bibitem{EvaG92}L. C. Evans and R. F. Gariepy,
\textit{Measure theory and fine properties of functions},
Studies in Advanced Mathematics series, CRC Press, Boca Raton, 1992.

\bibitem{Fed}H. Federer,
\textit{Geometric measure theory},
Die Grundlehren der mathematischen Wissenschaften, Band 153 Springer-Verlag New
York Inc., New York 1969 xiv+676 pp. 

\bibitem{Fug71}B. Fuglede,
\textit{The quasi topology associated with a countably subadditive set function},
Ann. Inst. Fourier 21 (1971), no. 1, 123--169. 

\bibitem{Giu84}E. Giusti,
\textit{Minimal surfaces and functions of bounded variation},
Monographs in Mathematics, 80. Birkh\"auser Verlag, Basel, 1984. xii+240 pp.

\bibitem{Hj}P. Haj\l{}asz,
\textit{Sobolev spaces on metric-measure spaces},
Heat kernels and analysis on manifolds, graphs, and metric
spaces (Paris, 2002), 173--218. Contemp. Math., 338, Amer. Math. Soc., Providence, RI, 2003.

\bibitem{HaKi}H. Hakkarainen and J. Kinnunen,
\textit{The BV-capacity in metric spaces},
Manuscripta Math. 132 (2010), no. 1-2, 51--73.

\bibitem{HKLL}H. Hakkarainen, J. Kinnunen, P. Lahti, and P. Lehtel\"a,
\textit{Relaxation and integral representation for functionals of linear growth on metric measures spaces},
Anal. Geom. Metr. Spaces 4 (2016), 288--313.

\bibitem{HKLS}H. Hakkarainen, R. Korte, P. Lahti, and N. Shanmugalingam,
\textit{Stability and continuity of functions of least gradient},
Anal. Geom. Metr. Spaces 3 (2015), 123--139.

\bibitem{HaSh}H. Hakkarainen and N. Shanmugalingam,
\textit{Comparisons of relative BV-capacities and Sobolev capacity in metric spaces},
Nonlinear Anal. 74 (2011), no. 16, 5525--5543.

\bibitem{HK}J. Heinonen and P. Koskela,
\textit{Quasiconformal maps in metric spaces with controlled geometry},
Acta Math. 181 (1998), no. 1, 1--61.

\bibitem{HKST}J. Heinonen, P. Koskela, N. Shanmugalingam, and J. Tyson,
\textit{Sobolev spaces on metric measure spaces.
An approach based on upper gradients},
New Mathematical Monographs, 27. Cambridge University Press, Cambridge, 2015. xii+434 pp.

\bibitem{KKLS}J. Kinnunen, R. Korte, A. Lorent, and N. Shanmugalingam,
\textit{Regularity of sets with quasiminimal boundary surfaces in metric spaces},
 J. Geom. Anal. 23 (2013), no. 4, 1607--1640.

\bibitem{KKST}J. Kinnunen, R. Korte, N. Shanmugalingam, and H. Tuominen,
\textit{A characterization of Newtonian functions with zero boundary values},
Calc. Var. Partial Differential Equations 43 (2012), no. 3-4, 507--528.

\bibitem{KKST3}J. Kinnunen, R. Korte, N. Shanmugalingam, and H. Tuominen,
\textit{Lebesgue points and capacities via the boxing inequality in metric spaces},
Indiana Univ. Math. J. 57 (2008), no. 1, 401--430.

\bibitem{KLLS}R. Korte, P. Lahti, X. Li, and N. Shanmugalingam,
\textit{Notions of Dirichlet problem for functions of least gradient in metric measure spaces},
preprint 2016.
{\tt http://cvgmt.sns.it/paper/3295/}

\bibitem{L}P. Lahti,
\textit{Extensions and traces of functions of bounded variation on metric spaces},
J. Math. Anal. Appl. 423 (2015), no. 1, 521--537.

\bibitem{L2}P. Lahti,
\textit{Quasiopen sets, bounded variation and lower semicontinuity in metric spaces},
preprint 2017. {\tt arXiv:1703.04675}

\bibitem{L-SA}P. Lahti,
\textit{Strong approximation of sets of finite perimeter in metric spaces},
to appear in manuscripta mathematica.\\
\texttt{doi:10.1007/s00229-017-0948-1}

\bibitem{LMSS} P. Lahti, L. Mal\'y, N. Shanmugalingam, and G. Speight,
\textit{Domains in metric measure spaces with boundary of positive mean curvature,
	and the Dirichlet problem for functions of least gradient}, preprint 2017.
	{\tt http://cvgmt.sns.it/paper/3500/}

\bibitem{LaSh2}P. Lahti and N. Shanmugalingam,
\textit{Fine properties and a notion of quasicontinuity for BV functions on metric spaces},
J. Math. Pures Appl. (9) 107 (2017), no. 2, 150--182. 

\bibitem{LaSh}P. Lahti and N. Shanmugalingam,
\textit{Trace theorems for functions of bounded variation in metric spaces},
preprint 2015. \texttt{arXiv:1507.07006}

\bibitem{MaSh}L. Mal\'y and N. Shanmugalingam,
\textit{Neumann problem for $p$-Laplace equation in metric spaces using a variational
approach: existence, boundedness, and boundary regularity},
preprint 2016.\\ \texttt{arXiv:1609.06808}

\bibitem{MSS}L. Mal\'y, N. Shanmugalingam, and M. Snipes,
\textit{Trace and extension theorems for functions of bounded variation},
to appear in Ann. Sc. Norm. Super. Pisa Cl. Sci. 
{\tt doi:10.2422/2036-2145.201511\_007}

\bibitem{MRS}J.\,M. Maz\'on, J.\,D. Rossi, and S. Segura de Le\'on,
\emph{Functions of least gradient and 1-harmonic functions.}
Indiana Univ. Math. J. {\bf 63} (2014), no. 4, 1067--1084. 

\bibitem{MRS2}J.\,M. Maz\'on, J.\,D. Rossi, and S. Segura de Le\'on,
\emph{The $1$-Laplacian elliptic equation with inhomogeneous 
Robin boundary conditions,}
Differential and Integral Equations {\bf 28} (2015) no. 5-6, 401--430.

\bibitem{MST}A. Mercaldo, J.\,D. Rossi, S. Segura de Le\'on, and C. Trombetti,
\emph{Behaviour of $p$-Laplacian problems with Neumann boundary conditions when $p$ goes to $1$,}
Commun. Pure Appl. Anal. \textbf{12} (2013), 253--267. 

\bibitem{M}M.~Miranda, Jr.,
\textit{Functions of bounded variation on ``good'' metric spaces},
J. Math. Pures Appl. (9)  82  (2003),  no. 8, 975--1004. 

\bibitem{Mo17}A. Moradifam,
\textit{Least gradient problems with Neumann boundary condition},
preprint 2016. {\tt arXiv:1612.08402}

\bibitem{Mak}T. M\"ak\"al\"ainen,
\textit{Adams inequality on metric measure spaces},
Rev. Mat. Iberoam. 25 (2009), no. 2, 533--558. 

\bibitem{SS2}C. Scheven and T. Schmidt,
\textit{On the dual formulation of obstacle problems for the total variation and
the area functional}, to appear in Ann. Inst. Henri Poincaré, Anal. Non Linéaire.\\
\texttt{http://cvgmt.sns.it/paper/3320/}

\bibitem{S2}N. Shanmugalingam,
\textit{Harmonic functions on metric spaces},
Illinois J. Math. 45 (2001), no. 3, 1021--1050.

\bibitem{S}N.~Shanmugalingam,
\textit{Newtonian spaces: An extension of {S}obolev spaces to metric measure spaces},
Rev. Mat. Iberoamericana 16(2) (2000), 243--279.

\bibitem{SWZ}P. Sternberg, G. Williams, and W. Ziemer,
\textit{Existence, uniqueness, and regularity for functions of least gradient},
J. Reine Angew. Math. 430 (1992), 35--60.

\bibitem{Zie89}W. P. Ziemer,
\textit{Weakly differentiable functions. Sobolev spaces and functions of bounded variation}, Graduate Texts in Mathematics, 120. Springer-Verlag, New York, 1989. 

\bibitem{ZiZu}W. Ziemer and K. Zumbrun,
\textit{The obstacle problem for functions of least gradient},
Math. Bohem. 124 (1999), no. 2-3, 193--219. 

\end{thebibliography}
\end{document}